 \theoremstyle{plain}
\newtheorem{thm}{Theorem}[section]
\theoremstyle{plain}
  \newtheorem{prop}[thm]{Proposition}
\theoremstyle{plain}
\theoremstyle{plain}
 \newtheorem{lemma}[thm]{Lemma}
\theoremstyle{plain}
\theoremstyle{plain}
\theoremstyle{definition}
 \theoremstyle{definition}
  \newtheorem{exam}[thm]{Example}
\theoremstyle{remark}
\newtheorem{rmk}[thm]{Remark}
\numberwithin{equation}{section}
\newcommand{\Z}{\mathbb{Z}}
\newcommand{\Q}{\mathbb{Q}}
\newcommand{\Qp}{\mathbb{Q}_p}
\newcommand{\R}{\mathbb{R}}
\newcommand{\F}{\mathbb{F}}
\newcommand{\fS}{\mathfrak{S}}
\newcommand{\fm}{\mathfrak{m}}
\newcommand{\fc}{\mathfrak{c}}
\newcommand{\cD}{\mathcal{D}}
\newcommand{\cH}{\mathcal{H}}
\newcommand{\cM}{\mathcal{M}}
\newcommand{\cN}{\mathcal{N}}
\newcommand{\cO}{\mathcal{O}}
\newcommand{\phz}{\varphi}
\newcommand{\Gal}{\mathrm{Gal}}
\newcommand{\Hom}{\mathrm{Hom}}
\newcommand{\GL}{\mathrm{GL}}
\DeclareMathOperator{\Mod}{Mod}
\DeclareMathOperator{\Mat}{Mat}
\DeclareMathOperator{\Fil}{Fil}
\DeclareMathOperator{\MF}{MF}
\DeclareMathOperator{\ev}{ev}
\DeclareMathOperator{\st}{{st}}
\newcommand{\Rep}{\operatorname{Rep}}
\newcommand{\crys}{\operatorname{crys}}
\newcommand{\wt}{\widetilde}
\newcommand{\D}{{\mathcal D}}
\newcommand{\M}{{\mathfrak M}}
\newenvironment{smallpmatrix}
  {\left(\begin{smallmatrix}}
  {\end{smallmatrix}\right)}
\title[Reductions of semi-stable representations]{Reductions of 2-dimensional semi-stable representations with large $\mathcal L$-invariant}
\author{John Bergdall}
\address{Bryn Mawr College,
Department of Mathematics,
101 North Merion Avenue,
Bryn Mawr, PA 19010, USA}
\email{jbergdall@brynmawr.edu}
\author{Brandon Levin}
\address{Department of Mathematics,
University of Arizona, 
617 N Santa Rita Avenue, 
Tucson, Arizona 85721, USA}
\email{bwlevin@math.arizona.edu}
\author{Tong Liu}
\address{Department of Mathematics,
Purdue University, 
	 150 N. University Street, 
	West Lafayette, Indiana 47907, USA}
\email{tongliu@math.purdue.edu}
\date{\today}
\subjclass[2000]{11F80 (11F85)}
\begin{document}

\begin{abstract}
We determine reductions of 2-dimensional, irreducible, semi-stable, and non-crystalline representations of $\Gal(\overline{\Q}_p/\Qp)$ with Hodge--Tate weights $0 < k-1$ and with $\mathcal L$-invariant whose $p$-adic norm is sufficiently large, depending on $k$. Our main result provides the first systematic examples of the reductions for $k \geq p$.
\end{abstract}
\maketitle

\setcounter{tocdepth}{1}
\tableofcontents

\section{Introduction}
Let $p$ be a prime number and $\overline{\mathbb Q}_p$ be an algebraic closure of the $p$-adic numbers $\mathbb Q_p$. The goal of this article is to determine the reductions of certain 2-dimensional $p$-adic representations of $G_{\mathbb Q_p} = \Gal(\overline{\mathbb Q}_p/\mathbb Q_p)$ that are semi-stable and not crystalline in the sense of Fontaine (\cite{Fontaine-RepresentationSemiStable}). Examples of such representations arise from local $p$-adic representations associated with eigenforms with $\Gamma_0(p)$-level.

\subsection{Main result}
Write $v_p$ for the $p$-adic valuation on $\overline \Q_p$, normalized so that $v_p(p) = 1$. Choose $\varpi \in \overline \Q_p$ such that $\varpi^2 = p$. Then, for each integer $k\geq 2$ and each $\mathcal L \in \overline \Q_p$, there is a 2-dimensional filtered $(\varphi,N)$-module $D_{k,\mathcal L} =  \overline \Q_pe_1 \oplus \overline \Q_pe_2$ where, in the basis $(e_1,e_2)$, we have:
\begin{align}\label{eqn:phi-module-basis}
\varphi &= \begin{pmatrix} \varpi^k & 0 \\ 0 & \varpi^{k-2} \end{pmatrix} 
&
N &= \begin{pmatrix} 0 & 0 \\ 1 & 0 \end{pmatrix}
&
\Fil^i D_{k,\mathcal L} &= \begin{cases} D_{k,\mathcal L} & \text{if $i \leq 0$;}\\
\overline \Q_p\cdot(e_1 + \mathcal Le_2) & \text{if $1 \leq i \leq k-1$;}\\
\{0\} & \text{if $k \leq i$}.
\end{cases}
\end{align}
Each $D_{k,\mathcal L}$ is weakly-admissible, so a theorem of Colmez and Fontaine implies there is a unique 2-dimensional $\overline \Q_p$-linear representation $V_{k,\mathcal L}$ of $G_{\Q_p}$ such that $D_{k,\mathcal L} = D_{\st}^{\ast}(V_{k,\mathcal L})$. Up to a  twist by a crystalline character, the representations $V_{k,\mathcal L}$ enumerate all $\overline \Q_p$-linear 2-dimensional semi-stable and non-crystalline representations of $G_{\Q_p}$. They are irreducible except if $k = 2$.

We aim to determine the semi-simple mod $p$ reductions $\overline V_{k,\mathcal L}$ of $V_{k,\mathcal L}$.  Twenty years ago, Breuil and M\'ezard determined $\overline V_{k,\mathcal L}$ for even $k < p$ and any $\mathcal L$ (\cite[Th\'eor\`eme 4.2.4.7]{BreuilMezard-Multiplicities}). Guerberoff and Park recently studied odd $k < p$ (\cite[Theorem 5.0.5]{GuerberoffPark-Semistable}). The reader who takes a moment to examine the cited theorems should be left with an impression of the complicated dependence of $\overline V_{k,\mathcal L}$ on $\mathcal L$, and that is just for $k < p$.

Prior results are limited by their ambition to determine $\overline V_{k,\mathcal L}$ for all $\mathcal L$. Here, we focus on determining $\overline V_{k,\mathcal L}$ for any $k$ while restricting to $\mathcal L$ that place $V_{k,\mathcal L}$ in a $p$-adic neighborhood of a crystalline representation (see Section \ref{subsec:overview}). Write $\mathbb Q_{p^2}$ for the unramified quadratic extension of $\mathbb Q_p$, $\chi$ for its quadratic character modulo $p$, and $\omega_2$ for a niveau 2 fundamental character on $G_{\mathbb Q_{p^2}}$.

\begin{thm}[Theorem \ref{thm:main-thm-in-text}]\label{thm:thm-intro}
Assume $ k \geq 4$ and $p \neq 2$. Then, if 
\begin{equation*}
\displaystyle v_p(\mathcal L) < 2 - \frac{k}{2} - v_p((k-2)!),
\end{equation*}
 then $\overline V_{k,\mathcal L} \cong \operatorname{Ind}_{G_{\mathbb Q_{p^2}}}^{G_{\mathbb Q_p}}(\omega_2^{k-1} \chi)$.
\end{thm}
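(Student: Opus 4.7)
The plan is to analyze a Breuil-type integral module for $D_{k,\mathcal L}$ directly, producing a lattice whose reduction modulo $p$ is computable. Start by rescaling $e_2' := \mathcal L e_2$, after which $\varphi$ remains diagonal with eigenvalues $\varpi^k, \varpi^{k-2}$, the filtration $\Fil^i = \overline \Q_p(e_1 + e_2')$ for $1 \le i \le k-1$ is unchanged, but the monodromy becomes $N(e_1) = \mathcal L^{-1} e_2'$, $N(e_2') = 0$. The hypothesis $v_p(\mathcal L) < 2 - k/2 - v_p((k-2)!)$ is equivalent to $v_p(\mathcal L^{-1}) > k/2 - 2 + v_p((k-2)!)$, making $N$ extremely $p$-adically small. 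Setting $N = 0$ yields a weakly admissible filtered $\varphi$-module $D^{\crys}$, so $V_{k,\mathcal L}$ should lie close to the crystalline representation $V^{\crys}$ attached to $D^{\crys}$, whose Frobenius trace $\varpi^k + \varpi^{k-2}$ has valuation $(k-2)/2$.

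The crux is to exhibit, in the framework of Breuil modules or Kisin modules with monodromy descent data for semi-stable representations of Hodge--Tate weights in $[0, k-1]$, a strongly divisible lattice $\mathcal M_{\mathcal L}$ inside $D_{k,\mathcal L}$ whose mod $p$ reduction agrees with that of an analogous lattice in $D^{\crys}$. The hypothesis on $\mathcal L$ is precisely what pushes the $N$-contribution inside $\mathcal M_{\mathcal L}/p\mathcal M_{\mathcal L}$ to zero: the factor $v_p((k-2)!)$ should reflect divided-power denominators arising from Hodge--Tate weights up to $k-1$, which must be absorbed by the smallness of $\mathcal L^{-1}$, while $k/2 - 2$ tracks the Frobenius valuations. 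Both lattices produce the corresponding mod $p$ Galois representation via the contravariant functor of the theory, and coincidence modulo $p$ gives $\overline V_{k,\mathcal L} \cong \overline V^{\crys}$.

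To finish, identify $\overline V^{\crys}$: since $v_p(a_p) = (k-2)/2$ lands in the ``large slope'' regime for the primes and weights allowed by the hypotheses ($p \ne 2$, and $k \ge 4$ when $p = 3$), results of Berger--Li--Zhu, or an analogous direct Kisin-module calculation for $D^{\crys}$ in parallel to the previous step, identify $\overline V^{\crys}$ with the irreducible induction $\Ind_{G_{\Q_{p^2}}}^{G_{\Q_p}}(\omega_2^{k-1}\chi)$. The unramified quadratic character $\chi$ records that the diagonal Frobenius eigenvalues $\varpi^k, \varpi^{k-2}$ live in the ramified extension $\Q_p(\sqrt p)$ rather than $\Q_p$, producing a half-integer-slope twist. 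The main obstacle is the second paragraph: matching the two integral lattices modulo $p$ and establishing that $2 - k/2 - v_p((k-2)!)$ is the precise threshold. This requires meticulous $p$-adic tracking across the Breuil/Kisin denominators and the monodromy descent data, and is what makes the theorem a substantive result rather than a soft perturbation argument.
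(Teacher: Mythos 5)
Your high-level framework matches the paper's: perturb from the crystalline representation at $\mathcal L^{-1} = 0$, build integral Breuil/Kisin-type lattices for $V_{k,\mathcal L}$ and for the crystalline limit, show they agree mod $p$, then identify the crystalline reduction. But the proposal stops precisely where the proof begins. Your second paragraph asserts that a suitable strongly divisible lattice $\mathcal M_{\mathcal L}$ exists and has the claimed reduction --- and then explicitly defers the verification (``This requires meticulous $p$-adic tracking\ldots''). That verification is the theorem; without it you have a heuristic, not a proof.

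Concretely, three ingredients are missing. \emph{First}, constructing the intermediate $\varphi$-module requires computing the filtration on the Breuil module $\mathcal D = S_F \otimes_F D_{k,\mathcal L}$ explicitly. This is not automatic: one must show $\Fil^{i}\mathcal D$ is free of rank 2, find a generator $\widehat f_2 + (\sum_{j<i} x_j E^j)\widehat f_1$, derive a recursion for the $x_j$ from the monodromy, and prove the estimate $v_p(x_j) + v_p(j!) + j \ge v_p(b)$ where $b$ is an entry of the monodromy matrix (comparable to $\mathcal L^{-1}$). This is where $v_p((k-2)!)$ enters --- not as a divided-power denominator in $S_{\mathrm{Br}}$ as you suggest, but as an accumulated loss in the filtration coefficients. \emph{Second}, the explicit $\varphi$-module over $S_F$ (or $S_{\mathrm{Br}}[1/p]$) is not yet an integral Kisin module over $\mathfrak S_\Lambda$; producing one requires a nontrivial descent, which the paper accomplishes via an iterative row-reduction over the ring $R_2$ of functions on $|u|\le p^{-1/2}$. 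The $\mathcal L$-bound is exactly what makes the descent converge. \emph{Third}, your appeal to Berger--Li--Zhu to identify $\overline V^{\crys}$ does not cover all allowed cases: their bound is $v_p(a_p) > \lfloor (k-2)/(p-1) \rfloor$, which fails for $p=3$ and odd $k$ since $v_p(a_p) = (k-2)/2$; one needs the sharper criterion $\lfloor (k-1)/p\rfloor < v_p(a_p)$ or an explicit Kisin-module computation for the crystalline limit (both from the prior paper of the first two authors). Since you acknowledge the gap yourself, the assessment is simply that the proposal is a correct roadmap but not a proof; the essential mathematical content remains to be done.
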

To be accurate, our method proves Theorem \ref{thm:thm-intro} when $k \geq 5$ or $p=3$ and $k=4$. The theorem holds for $k =4$ and $p \geq 5$ by the work of Breuil--M\'ezard, and it is consistent with their work and the work of Guerberoff--Park for $5 \leq k < p$. Our method also directly obtains a result for $k = 3$ and $k = 4$ with a weaker bound. See Remark \ref{remark:final-remark} for a more detailed discussion. Our exclusion of $p=2$ is more fundamental (see Remark \ref{rmk:p=2-intro}).

\begin{rmk} When $k < p$ and $k$ is even, the bound in Theorem \ref{thm:thm-intro} is optimal by the results of Breuil--M\'ezard. The same can be said if $5 \leq k < p$ and $k$ is odd, by the work of Guerberoff--Park.    We do not know to what extent the bound is optimal for higher weights (see Section \ref{subsec:globalcontext}).
\end{rmk}

Theorem \ref{thm:thm-intro} is a natural analog of widely-studied theorems that determine reductions of 2-dimensional, irreducible, crystalline representations of $G_{\mathbb Q_p}$. For instance, Buzzard and Gee (\cite{BuzzardGee-SmallSlope}) developed a strategy to determine reductions of certain crystalline representations, with unbounded Hodge--Tate weights, using the $p$-adic local Langlands correspondence. We do not know whether a direct analog for semi-stable, but non-crystalline, representations has been tried or, even, if such an approach would be feasible.

Another approach in the crystalline case is via integral $p$-adic Hodge theory. Berger, Li, and Zhu and Berger proved local constancy results for reductions of crystalline representations using Wach modules (\cite{BergerLiZhu-SmallSlopes,Berger-LocalConstancy}). Recently, the first two named authors of this article improved the Berger--Li--Zhu result using Kisin modules (\cite{BergdallLevin-BLZ}). Those are what we will use here, also. One incentive to write the prior article was as training to conduct the current research.

Finally, an indirect approach to calculating $\overline V_{k,\mathcal L}$ is explained in a recent preprint of Chitrao, Ghate, and Yasuda (\cite{ChitraoGhateYasuda-Semistable}), though their investigation heads in a  interesting separate direction from ours.

\subsection{Overview of strategy}\label{subsec:overview}
We now describe our strategy, first re-contextualizing Theorem \ref{thm:thm-intro} through the lens of local constancy of reductions as in \cite{BergerLiZhu-SmallSlopes,Berger-LocalConstancy,BergdallLevin-BLZ}.

The parametrization of semi-stable and non-crystalline representations by $\mathcal L\in \overline{\mathbb Q}_p$ extends to a $\mathbb P^1(\overline{\mathbb Q}_p)$-parametrization with a crystalline representation at $\infty$. Namely, for $\mathcal L \neq 0$ we consider $D_{k,\mathcal L}$ with basis $(e_1',e_2') = (e_1,\mathcal L e_2)$ in which case, rather than \eqref{eqn:phi-module-basis}, we have
\begin{align}\label{eqn:phi-module-basis-2}
\varphi &= \begin{pmatrix} \varpi^k & 0 \\ 0 & \varpi^{k-2} \end{pmatrix} 
&
N &= \begin{pmatrix} 0 & 0 \\ \mathcal L^{-1} & 0 \end{pmatrix}
&
\Fil^i D_{k,\mathcal L} &= \begin{cases} D_{k,\mathcal L} & \text{if $i \leq 0$;}\\
\overline \Q_p\cdot(e_1' + e_2') & \text{if $1 \leq i \leq k-1$;}\\
\{0\} & \text{if $k \leq i$}.
\end{cases}
\end{align}
Thus, $D_{k,\mathcal L} \rightarrow D_{k,\infty}$ as $\mathcal L^{-1} \rightarrow 0$, where $D_{k,\infty}$ is the filtered $(\varphi,N)$-module with the same $\varphi$ and filtration as \eqref{eqn:phi-module-basis-2} but with $N = 0$. In fact, $D_{k,\infty} \cong D_{\crys}^{\ast}(V_{k,\infty})$ where $V_{k,\infty}$ is a 2-dimensional {\em crystalline} representation of $G_{\mathbb Q_p}$ whose Frobenius trace is $a_p = \varpi^{k-2} + \varpi^{k}$.  Replacing the filtered $(\varphi,N)$-modules with Galois representations, we have $V_{k,\mathcal L} \rightarrow V_{k,\infty}$ as $\mathcal L^{-1} \rightarrow 0$ (see the description in \cite[Section 4.5-4.6]{Colmez-Trianguline} in terms of the space of trianguline representations, for instance). Thus, $\overline V_{k,\mathcal L} \cong \overline V_{k,\infty}$ for $\mathcal L^{-1} \rightarrow 0$. Furthermore, $v_p(a_p) = \frac{k-2}{2}$ and so $\lfloor{\frac{k-1}{p}}\rfloor < v_p(a_p)$, except if $p = 2$ or $k$ is small, and so $\overline V_{k,\infty} \cong \operatorname{Ind}_{G_{\mathbb Q_{p^2}}}^{G_{\mathbb Q_p}}(\omega_2^{k-1} \chi)$ by \cite[Corollary 5.2.3]{BergdallLevin-BLZ}. We have reduced the theorem to the question:\ at which point as $\mathcal L^{-1} \rightarrow 0$, do we have $\overline V_{k,\mathcal L} \cong \overline V_{k,\infty}$?

We recall the relationship between reductions and Kisin modules, now. To ease notations, assume for the remainder of this subsection that $k$ is even and $\mathcal L \in \mathbb Q_p$, so $V_{k,\mathcal L}$ and $V_{k,\infty}$ are defined over $\mathbb Q_p$. Let $\mathfrak S = \mathbb Z_p[\![u]\!]$, and write $\varphi: \mathfrak S \rightarrow \mathfrak S$ for the Frobenius map $\varphi(u) = u^p$. Then, consider the category $\Mod_{\mathfrak S}^{\varphi,\leq k-1}$ of $\varphi$-modules over $\mathfrak S$ with height $\leq k-1$ (\cite{Kisin-FCrystals}). Objects in this category, which are called Kisin modules, are finite free $\mathfrak S$-modules $\mathfrak M$ equipped with a $\varphi$-semilinear operator $\varphi: \mathfrak M \rightarrow \mathfrak M$ such that the cokernel of the linearization $\varphi^{\ast}\mathfrak M \rightarrow \mathfrak M$ is annihilated by $E(u)^{k-1}$, where $E(u) = u+p$. When $\mathfrak M$ satisfies the {\em monodromy condition}, Kisin's theory constructs a canonical semi-stable representation $V_{\mathfrak M}$ such that $D_{\st}^{\ast}(V_{\mathfrak M}) \cong \mathfrak M/u\mathfrak M[1/p]$, for a certain filtration and monodromy on the right-hand side. Furthermore, $\overline V_{\mathfrak M}$ is determined by $\mathfrak M/p\mathfrak M[u^{-1}]$ as a $\varphi$-module over $\mathbb F_p(\!(u)\!)$. The challenge in calculating $\overline V_{\mathfrak M}$ this way is determining $\mathfrak M$ from $V_{\mathfrak M}$ or, equivalently, $D_{\st}^{\ast}(V_{\mathfrak M})$. That task was carried out for $V_{k,\infty}$ in \cite[Theorem 5.2.1]{BergdallLevin-BLZ}. 

The heart of this article is a two-step argument to do the same for $V_{k,\mathcal L}$ as $\mathcal L^{-1} \rightarrow 0$. The difficulty presented by non-trivial monodromy on $D_{k,\mathcal L}$  requires us to develop a new technique to pass from filtered $(\varphi,N)$-modules to Kisin modules. We make use of a category intermediate between filtered $(\varphi,N)$-modules and Kisin modules. Namely, write $\Mod_{S_{\mathbb Q_p}}^{\varphi,\leq k-1}$ for the category of $\varphi$-modules over $S_{\mathbb Q_p} = \mathbb Z_p[\![u,\frac{E^p}{p}]\!][\frac 1 p]$ with height $\leq k-1$. This category is close to certain filtered $(\varphi,N)$-modules considered by Breuil (\cite{Breuil-Griffiths}). Adapting Breuil's work, we explicitly construct a canonical object $\mathcal M_{k,\mathcal L} \in \Mod_{S_{\mathbb Q_p}}^{\varphi,\leq k-1}$ such that  if $\mathfrak M \in \Mod_{\mathfrak S}^{\varphi,\leq k-1}$ and $\mathcal M_{k,\mathcal L} \cong \mathfrak M \otimes_{\mathfrak S} S_{\mathbb Q_p}$, then $V_{\mathfrak M} \cong V_{k,\mathcal L}$. Explicit means, for any (non-zero) $\mathcal L$, we determine a basis of $\mathcal M_{k,\mathcal L}$ and an exact formula for $\varphi$ in that basis. This is  where we overcome the difficulty of non-trivial monodromy on $D_{k,\mathcal L}$.

The second step is to descend $\mathcal M_{k,\mathcal L}$ from $S_{\mathbb Q_p}$ to $\mathfrak S$ when $\mathcal L^{-1} \rightarrow 0$, thus producing an $\mathfrak M$ for $V_{k,\mathcal L}$. Here, we view $S_{\mathbb Q_p}$ as subring of $R_2$, where $R_2$ is the ring of $p$-adic rigid analytic functions on $|u| \leq p^{-1/2}$ (using that $p\neq 2)$. Section 4 of \cite{BergdallLevin-BLZ} presents a row reduction algorithm for semilinear operators that, under certain conditions, can descend from $R_2$ to $\mathfrak S$.   Specifically, the main theorem in {\em loc.\ cit.}\ gives a sufficient condition to descend $\mathcal M_{k,\mathcal L} \otimes_{S_{\mathbb Q_p}} R_2$ to $\mathfrak S$. Saving the details for later, we use the explicit calculation of $\mathcal M_{k,\mathcal L}$ to check those conditions are met when $v_p(\mathcal L) < 2 - {k\over 2} + v_p((k-2)!)$.

\begin{rmk}\label{rmk-thedifference} As already discussed, our approach in the first step is more general than \cite{BergdallLevin-BLZ} as it applies in the semi-stable, non-crystalline case.  In fact, the method is quite general and can be used (with a suitable descent process) to compute reductions for higher dimensional semi-stable representations.   For example, the third author has used the strategy here to compute reductions of irreducible 3-dimensional crystalline representations of $G_{\Q_p}$ with Hodge-Tate weights $\{0 , r , s\}$ satisfying  $2 \leq  r  \leq p-2$ and $ p+2 \leq  s \leq r+ p-2$. See \cite{Liu-3dimensional}.
\end{rmk}

\begin{rmk}\label{rmk:p=2-intro}
We exclude $p=2$ twice. The second time, when we embed $S_{\mathbb Q_p}$ into $R_2$ is likely technical. However, we also exclude $p=2$ when referencing the calculation of $\overline V_{k,\infty}$ in \cite{BergdallLevin-BLZ}, and that seems crucial:\ our strategy is based not just on knowing $\overline V_{k,\infty}$, but also how to construct a Kisin module for $V_{k,\infty}$. Including $p=2$, here would necessarily require calculating $\overline V_{k,\infty}$ when $p=2$ as well. We note the formula $\overline V_{k,\infty}\cong \operatorname{Ind}_{G_{\mathbb Q_{p^2}}}^{G_{\mathbb Q_p}}(\omega_2^{k-1}\chi)$ should still be true, but we cannot justify it.
\end{rmk}

\subsection{Global context} \label{subsec:globalcontext}
We end this introduction with a discussion of the global situation. Suppose $N \geq 1$ and $f = \sum a_n(f)q^n$ is a cuspidal (normalized) eigenform of level $\Gamma_1(N)$, weight $k \geq 2$, and nebentype character $\psi_f$. Eichler--Shimura and Deligne famously associated with $f$ a 2-dimensional, irreducible, continuous representation $V_f$ of $\Gal(\overline \Q/\mathbb Q)$. We normalize $V_f$ so that for $\ell \nmid Np$ the restriction $V_f|_{D_\ell}$ to $D_\ell$, a decomposition group at $\ell$, is unramified and the characteristic polynomial of a geometric Frobenius element is $t^2 - a_\ell(f)t + \psi_f(\ell)\ell^{k-1}$. The representation $V_f|_{D_p}$ is semi-stable when $p^2 \nmid N$ and the conductor of $\psi_f$ is prime-to-$p$; it is crystalline when $p \nmid N$ (\cite{Saito-padicHodgeTheory}).

We assume now that $V_f|_{D_p}$ is semi-stable and non-crystalline, in which case we define the $\mathcal L$-invariant of $f$ to be the unique $\mathcal L_f \in \overline{\mathbb Q}_p$ such that $V_f|_{D_p} \cong V_{k,\mathcal L_f}$. The $\mathcal L$-invariant defined this way is called the Fontaine--Mazur $\mathcal L$-invariant (it agrees with \cite[Section 12]{Mazur-Monodromy} up to a sign). It is a local quantity, but it famously arises in global situations. Examining how it arises allows us to provide global examples where Theorem \ref{thm:thm-intro} applies and to connect $\mathcal L$-invariants to global phenomena on $p$-adic families.

Theorem \ref{thm:thm-intro} determines $(\overline{V_f}|_{D_p})^{\mathrm{ss}}$ in arbitrary weights $k \geq p$ as long as $v_p(\mathcal L_f)$ is sufficiently negative, but it is not immediately obvious that eigenforms exist with $v_p(\mathcal L_f)$ so negative. Recent research, however, sheds light on the situation. For instance, Gr\"af (\cite{Graef-Linvariant}) and Anni, B\"ockle, Gr\"af, and Troya (see \cite{ABGT-Linvariants}, which builds on \cite{ColemanStevensTeitelbaum-Experiments}) have developed the theory and practice needed to calculate the multiset of valuations of $\mathcal L$-invariants in a fixed weight and level. Pollack has also developed computer code to calculate $\mathcal L$-invariants. His method, which dates to the early 2000's, uses the appearance of $\mathcal L$-invariants in exceptional zero phenomena for $p$-adic $L$-functions. That method is being written up as part of a joint investigation by Pollack and the first author.

Using their works, both Pollack and Gr\"af kindly calculated some $\mathcal L$-invariants for us. In Table \ref{table:L-inv}, we partially list the $p$-adic valuations found when $p = 3$ and $N = 51 = 3 \cdot 17$.
\begin{table}[htp]
\renewcommand{\arraystretch}{1.1}
\caption{$3$-adic valuations of some $\mathcal L$-invariants.}
\begin{center}
\begin{tabular}{|c|l|}
\hline
$k$ & $v_3(\mathcal L_f)$ for newforms $f \in S_k(\Gamma_0(51))$\\
\hline
4 & $-2,-1,0,0,\dotsc$\\ 
6 & $-3,-2,-1,-1,-1,\dotsc$\\
8 & $-3,-3, -\frac 3 2, -\frac 3 2, -\frac 3 2, -\frac 3 2, -1, \dotsc$\\
\hline
\end{tabular}
\end{center}
\label{table:L-inv}
\end{table}%
Note, the bound in Theorem \ref{thm:thm-intro} is $v_3(\mathcal L_f) < 0$ in weight $k = 4$ and $v_3(\mathcal L_f) < -2$ in weight $k=6$, so Table \ref{table:L-inv} provides two examples of  Theorem \ref{thm:thm-intro} in weight $k = 4$ and one example of Theorem \ref{thm:thm-intro} in weight $k = 6$, though none in weight $k=8$.

Let's look further at $p = 3$ and $k = 6$ and the boundary case $v_3(\mathcal L) = -2$ in Theorem \ref{thm:thm-intro}. Pollack's code, in fact, reports not just $v_3(\mathcal L_f)$ for each newform $f$, but also $\overline V_f$. This refined data shows that the eigenforms with weight $k =6$ and $v_3(\mathcal L_f)$ equal to $-3$ and $-2$ have isomorphic global Galois representations modulo $3$. Since Theorem \ref{thm:thm-intro} applies to $v_3(\mathcal L) = -3$, we see that there exists $\mathcal L$-invariants with $v_3(\mathcal L) = -2$ for which the conclusion of Theorem \ref{thm:thm-intro} continues to hold. More numerical data is required before theorizing about the sharpness of the bound in Theorem \ref{thm:thm-intro}.

The $\mathcal L$-invariants also arise, globally, from $p$-adic families. Namely, $f$ lives in a $p$-adic family of eigenforms parametrized by weights $k \in \mathbb Z_p$ and $\mathcal L_f = -2 \operatorname{dlog} a_p(k) = -2{a_p'(k)\over a_p(f)}$ (\cite[Corollarie 0.7]{Colmez-L_invariants}).  This appearance reveals an obstruction to the ``radius'' of the largest ``constant slope'' family through $f$. Indeed, for $p\neq 2$, \cite[Theorem 4.3]{Bergdall-ConstantSlopes} implies $v_p(\mathcal L_f^{-1}) \leq m(f)$ where $m(f)$ is the least positive integer such that $f$ lives in a $p$-adic family of eigenforms $f'$ with $v_p(a_p(f')) = v_p(a_p(f))$ and weight $k' \equiv k \bmod (p-1)p^{m(f)}$. 

So, ruling out exceptions to Theorem \ref{thm:thm-intro}, we have $v_p(\mathcal L_f) < 2 - {k\over 2} - v_p((k-2)!)$ implies
\begin{itemize}
\item $(\overline V_f|_{D_p})^{\mathrm{ss}} \cong \operatorname{Ind}_{G_{\mathbb Q_{p^2}}}^{G_{\mathbb Q_p}}(\omega_2^{k-1} \chi)$, and
\item $m(f) > {k\over 2} - 2 + v_p((k-2)!) \approx {k-2\over 2}  + {k\over p-1}$.
\end{itemize}
To connect these, if $k\not\equiv 1 \bmod p+1$, then  $\overline V_f|_{D_p}$ is irreducible. On the other hand, condition (2) generically implies $m(f) > {k-2\over 2} = v_p(a_p(f))$. The fact that $m(f) > v_p(a_p(f))$ occurs in a situation where $\overline V_f|_{D_p}$ is irreducible is not a coincidence. It follows a pattern of counter-examples to a conjecture of Gouv\^ea and Mazur, which is related to the $m(f)$, found by Buzzard and Calegari (\cite{BuzzardCalegari-GouveaMazur}). See \cite[Section 9]{Bergdall-ConstantSlopes} for more discussion.

\subsection{Acknowledgements}
We owe the heuristic reframing in Section \ref{subsec:overview} to comments by Laurent Berger  and Christophe Breuil during the conference ``G\'eom\'etrie arithm\'etique, th\'eorie des repr\'esentations et applications'' at the Centre International de Rencontres Math\'ematiques (CIRM) in Luminy, France. Part of this collaboration also took place during the workshop ``Moduli spaces and modularity'' at Casa Matem\'atica Oaxaca (CMO). We thank both Berger and Breuil for their comments and both CIRM and CMO for their hospitality. 

Finally, acknowledgments are due for the discussion in Section \ref{subsec:globalcontext}. First, the data reported in Table \ref{table:L-inv} in the first preprint version of this article was inaccurate. Because of that, we drew faulty conclusions, which have now been removed, on the strength of the bound in Theorem \ref{thm:thm-intro}. We thank Robert Pollack for calculating the original data and then alerting us to the error. We also especially thank Peter Gr\"af for replicating the newly reported data, using his alternative method.

J.B.\ was partially supported by an NSF Grant (DMS-1402005) and a Simons Collaboration Grant (\#713782). B.L.\ was supported by a grant from the Simons Foundation/SFARI (\#585753).

\section{Theoretical background}\label{sec:theory}
In this section, we recall filtered $(\varphi,N)$-modules and Breuil and Kisin modules. We explain, in theory, how to calculate a finite height $\varphi$-module, over a ring larger than $\mathfrak S$, associated with a filtered $(\varphi,N)$-module (Theorem \ref{thm:connect}). In Section \ref{sec:explicit-family}, we carry this out in practice in a special case.

\subsection{Notations}
Let $k$ be a finite field and $W(k)$ be the Witt vectors over $k$. Set $K_0 = W(k)[1/p]$ and assume $K/K_0$ is a totally ramified extension of degree $e$.  Let $\Lambda_K$ be the ring of integers of $K$, $\pi \in \Lambda_K$ a uniformizer and $E= E(u)\in W(k)[u]$ its Eisenstein polynomial. Choosing $\pi_0 = \pi$ and $\pi_1,\pi_2,\dotsc$ a sequence in $\overline K$ such that $\pi_{i+1}^p = \pi_i$, we let $G_\infty$ be the absolute Galois group of $\varinjlim K(\pi_i)$. Let $\mathcal O \subseteq K_0[\![u]\!]$ be the rigid analytic functions on $|u| < 1$ and $\fS = W(k)[\![u]\!] \subseteq \mathcal O$. The action of $\varphi$ on $K_0[\![u]\!]$, by the Frobenius on $K_0$ and $\varphi(u) = u^p$, preserves $\fS \subseteq \mathcal O \subseteq K_0[\![u]\!]$.

We also choose $F/\Qp$ a finite extension, which will play the role of linear coefficients. In Section \ref{sec:comparison}, we assume $F$ contains a subfield isomorphic the Galois closure of $K$. We write $\Lambda \subseteq F$ for the ring of integers, $\mathfrak m_F \subseteq \Lambda$ the maximal ideal, and $\F$ for the residue field. Define $\fS_{\Lambda} = \fS\otimes_{\mathbb Z_p} \Lambda$ and $\mathcal O_F = \mathcal O\otimes_{K_0} F$. Extending $\varphi$ linearly, we have $\varphi$-stable subrings of $\mathfrak S_{\Lambda} \subseteq S_F \subseteq (K_0\otimes_{\mathbb Q_p} F)[\![u]\!]$, where $S_F = \fS [\![ \frac{E^p}{p}]\!] \otimes_{\Q_p} F$.

\subsection{Kisin modules} 
Let $R \subseteq (K_0\otimes_{\mathbb Q_p} F)[\![u]\!]$ be a $\varphi$-stable subring containing $E$. A $\varphi$-module over $R$ is a finite free $R$-module $M$ equipped with an injective $\varphi$-semilinear operator $\varphi_M : M \rightarrow M$. Let $\Mod_{R}^{\varphi}$ be the category of $\varphi$-modules over $R$ with morphisms being $R$-linear maps that commute with $\varphi$. For a $\varphi$-module $M$, write $\varphi^{\ast}M = R\otimes_{\varphi,R} M$, so $1\otimes \varphi_M$ defines an $R$-linear map $\varphi^{\ast}M \rightarrow M$ called the linearization of $\varphi$. For $h \geq 0$,  an element $M \in \Mod_R^{\varphi}$ has ($E$)-height $\leq h$ if its linearization has cokernel annihilated by $E^h$. The subcategory of $\varphi$-modules over $R$ with height $\leq h$ is denoted $\Mod_{R}^{\varphi,\leq h}$. A \emph{Kisin module} over $\fS_{\Lambda}$ with height $\leq h$ is an object in $\Mod_{\fS_{\Lambda}}^{\varphi,\leq h}$.

Let $\mathrm{MF}^{\varphi, N}_F$ be the category of \emph{positive} filtered $(\varphi, N,K,F)$-modules, which we shorten to just filtered $(\varphi,N)$-modules over $F$ (see \cite[Section 3.1.1]{BreuilMezard-Multiplicities}). For $D \in \MF^{\varphi,N}_F$ set $D_K = K\otimes_{K_0} D$; here, positive means $\Fil ^0 D_K = D_K$. Let $\mathrm{Rep}_{F}^{\mathrm{st}, h}$ be the category of $F$-linear semi-stable representations $V$ of $G_K$ whose Hodge-Tate weights lie in $\{0 , \dots, h\}$. Then, there exists a fully faithful, contravariant, functor
$$
D_{\st}^{\ast} : \Rep_F^{\mathrm{st},h} \rightarrow \MF_F^{\varphi,N} 
$$
whose image is the subcategory of weakly-admissible filtered $(\varphi,N)$-modules over $F$ (see \cite{Fontaine-RepresentationSemiStable,ColmezFontaine-WAdmImpliesAdm} and \cite[Corollaire 3.1.1.3]{BreuilMezard-Multiplicities}). For  $V \in \Rep_F^{\mathrm{st},h}$ and $T \subseteq V$ a $G_\infty$-stable and $\Lambda$-linear lattice there exists, by \cite[Theorem 5.4.1]{Liu-Torsion}, a canonical Kisin module $\mathfrak M = \mathfrak M(T)$ over $\mathfrak S_{\Lambda}$ with height $\leq h$. Naturally, we say a Kisin module $\mathfrak M$ is associated with $V$ if $\mathfrak M = \mathfrak M(T)$ for some $T$. By \cite[Corollary 2.3.2]{BergdallLevin-BLZ}, the semi-simple mod $p$ representation $\overline V$ can be determined from any associated Kisin module.

One category that intervenes in determining an $\mathfrak M$ associated with $V \in \Rep_F^{\mathrm{st},h}$ is the category of $(\varphi,N_{\nabla})$-modules over $\mathcal O_F$ (\cite{Kisin-FCrystals}). Let $\lambda = \prod_{n\geq 0} \varphi^{n}(E(u)/E(0)) \in \mathcal O_F$. An object $\mathcal M_{\mathcal O_F} \in \Mod_{\mathcal O_F}^{\varphi,N_{\nabla}}$ is a finite height $\varphi$-module over $\mathcal O_F$ equipped with a differential operator $N_{\nabla}$ lying over $-u\lambda{d \over du}$ on $\mathcal O_F$ and satisfying $N_{\nabla} \varphi = p{E(u)\over E(0)}\varphi N_{\nabla}$. By \cite[Theorem 1.2.15]{Kisin-FCrystals}, we have quasi-inverse equivalences of categories
\begin{equation}\label{eqn:MFMod-equiv}
\xymatrix{
\MF_F^{\varphi,N} \ar@/_/[r]_-{\underline{\mathcal M}_{\mathcal O_F}} & \Mod_{\mathcal O_F}^{\varphi,N_{\nabla}} \ar@/_/[l]_-{\underline D_{\mathcal O_F}}.
}
\end{equation}

For $s > 0$, write $\mathcal O_s$ for the $\mathcal O_F$-algebra of rigid analytic functions converging on $|u|<p^{-s}$.

\begin{prop}\label{prop:reductions-first}
Suppose $\mathfrak M \in \Mod_{\mathfrak S_{\Lambda}}^{\varphi,\leq h}$, $V \in \Rep_F^{\st,h}$, and $s$ is such that $1/pe < s < 1/e$ and $\mathfrak M \otimes_{\mathfrak S_\Lambda} \mathcal O_s \cong \underline{\mathcal M}_{\mathcal O_F}(D_{\st}^{\ast}(V)) \otimes_{\mathcal O_F} \mathcal O_s$ in  $\Mod_{\mathcal O_s}^{\varphi,\leq h}$. Then, $\mathfrak M = \mathfrak M(T)$ for some $T \subseteq V$ as above.
\end{prop}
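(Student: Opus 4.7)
The plan is to upgrade the hypothesised isomorphism from $\mathcal O_s$ to $\mathcal O_F$, transport the canonical $N_\nabla$-operator on $\underline{\mathcal M}_{\mathcal O_F}(D_{\st}^{\ast}(V))$ to $\mathfrak M \otimes_{\mathfrak S_\Lambda} \mathcal O_F$, and then invoke Liu's integral refinement of Kisin's equivalence to recognize $\mathfrak M$ as $\mathfrak M(T)$ for a suitable $G_\infty$-stable lattice $T \subseteq V$.

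First I would show that the base-change functor $\Mod_{\mathcal O_F}^{\varphi, \leq h} \to \Mod_{\mathcal O_s}^{\varphi, \leq h}$ is fully faithful on isomorphisms when $1/pe < s < 1/e$. The choice of $s$ is made precisely so that $E(u)$ has a zero in the disk $|u| < p^{-s}$ (as $1/e > s$), while $\varphi(E) = E(u^p)$ does not (as $s > 1/pe$); hence the linearization $\varphi^{\ast}\mathfrak M \to \mathfrak M$ becomes an isomorphism after the Frobenius pullback $\varphi: \mathcal O_s \to \mathcal O_{s/p}$. Iterating $\varphi$, any $\varphi$-equivariant isomorphism
\[
f: \mathfrak M \otimes_{\mathfrak S_\Lambda} \mathcal O_s \xrightarrow{\sim} \underline{\mathcal M}_{\mathcal O_F}(D_{\st}^{\ast}(V)) \otimes_{\mathcal O_F} \mathcal O_s
\]
propagates to isomorphisms over each $\mathcal O_{s/p^n}$, and since $\bigcap_{n \geq 0} \mathcal O_{s/p^n} = \mathcal O_F$, one obtains an isomorphism $\mathfrak M \otimes_{\mathfrak S_\Lambda} \mathcal O_F \cong \underline{\mathcal M}_{\mathcal O_F}(D_{\st}^{\ast}(V))$ in $\Mod_{\mathcal O_F}^{\varphi, \leq h}$.

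Second, once the isomorphism is defined over $\mathcal O_F$, I would transport the $N_\nabla$-operator from the right-hand side to give $\mathfrak M \otimes_{\mathfrak S_\Lambda} \mathcal O_F$ the structure of an object in $\Mod_{\mathcal O_F}^{\varphi, N_\nabla}$, isomorphic to $\underline{\mathcal M}_{\mathcal O_F}(D_{\st}^{\ast}(V))$. This is precisely the monodromy condition for $\mathfrak M$. Liu's theory \cite[Theorem 5.4.1]{Liu-Torsion} then shows that such a Kisin module is of the form $\mathfrak M(T)$ for a unique $G_\infty$-stable $\Lambda$-lattice $T$ in a semi-stable $F$-representation; combining this with the equivalence \eqref{eqn:MFMod-equiv}, the filtered $(\varphi,N)$-module of that representation is forced to coincide with $D_{\st}^{\ast}(V)$, so $T \subseteq V$ and $\mathfrak M = \mathfrak M(T)$.

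The principal obstacle is the first step: the inductive propagation of the isomorphism via iterated Frobenius pullback. Conceptually this is a standard finite-height descent, in the spirit of the arguments in \cite[\S 4]{BergdallLevin-BLZ}, but one must carefully track convergence radii and verify that the extensions built inductively stabilize into a well-defined $\mathcal O_F$-linear isomorphism.
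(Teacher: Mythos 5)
Your proposal reaches the right conclusion by a slightly different route. The paper does \emph{not} attempt to extend the given $\mathcal O_s$-isomorphism to $\mathcal O_F$. Instead it invokes \cite[Corollary 2.2.5]{BergdallLevin-BLZ} to produce a canonical $N_{\nabla}$-structure on $\mathfrak M \otimes_{\mathfrak S_{\Lambda}} \mathcal O_F$ directly from the existence of the $\mathcal O_s$-isomorphism, then applies Liu's theorem to obtain $\mathfrak M = \mathfrak M(T)$ with $T \subseteq V'$, and finally identifies $V \cong V'$ by observing that the construction of $\underline D_{\mathcal O_F}(\mathcal M_{\mathcal O_F})$ in \cite[Section 1.2.5-7]{Kisin-FCrystals} depends only on the restriction of the $\varphi$-module to $\mathcal O_s$, whence $\underline D_{\mathcal O_F}(\mathcal M_{\mathcal O_F}) \cong D_{\st}^{\ast}(V)$ already at that level, and full faithfulness of $D_{\st}^{\ast}$ closes the argument. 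You front-load the work: you first upgrade the isomorphism itself to $\mathcal O_F$ (full faithfulness of $-\otimes_{\mathcal O_F}\mathcal O_s$ on finite-height $\varphi$-modules for $1/pe < s < 1/e$), after which transporting $N_{\nabla}$ and identifying $V$ via the equivalence \eqref{eqn:MFMod-equiv} are formal. This is a legitimate alternative: it buys a cleaner second half at the cost of verifying the extension step, which, as you acknowledge, is standard but delicate. Concretely, one must check that the map forced by $\varphi$-equivariance on the annulus $p^{-s} \leq |u| < p^{-s/p}$, where $E$ is a unit so the linearization of $\varphi$ is an isomorphism, glues compatibly with $f$ on the inner overlap and that the iterates stabilize into a genuine $\mathcal O_F$-linear map. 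The Frobenius-rigidity you exploit is the same mechanism powering the corollary the paper cites, so the two proofs are close cousins; yours avoids inspecting the internals of Kisin's construction of $\underline D_{\mathcal O_F}$, while the paper avoids proving the extension lemma.
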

\begin{proof}
Since $s < 1/e$, $\pi$ lies in the disc $|u| < p^{-s}$. Since $\mathfrak M \otimes_{\mathfrak S_\Lambda} \mathcal O_s \cong \underline{\mathcal M}_{\mathcal O_F}(D_{\st}^{\ast}(V)) \otimes_{\mathcal O_F} \mathcal O_s$, \cite[Corollary 2.2.5]{BergdallLevin-BLZ} implies that $\mathcal M_{\mathcal O_F} := \mathfrak M \otimes_{\mathfrak S_{\Lambda}} \mathcal O_F$ is canonically an object in $\Mod_{\mathcal O_F}^{\varphi,N_{\nabla}}$. Then, \cite[Theorem 5.4.1]{Liu-Torsion} implies that there exists a $V' \in \Rep_F^{\st,h}$ such that $\mathfrak M=\mathfrak M(T)$ for a lattice $T \subseteq V'$ for some $T$. We claim that $V \cong V'$. Indeed, since $1/pe < s < 1/e$, the definition of $\underline D_{\mathcal O_F}(\mathcal M_{\mathcal O_F})$ in \cite[Section 1.2.5-7]{Kisin-FCrystals} depends only the finite height $\varphi$-module $\mathcal M_{\mathcal O_F} \otimes_{\mathcal O_F} \mathcal O_s$ over $\mathcal O_s$. Thus,
 we have 
$$
D_{\st}^{\ast}(V') \cong \underline D_{\mathcal O_F}(\mathcal M_{\mathcal O_F}) \cong \underline D_{\mathcal O_F}(\underline{\mathcal M}_{\mathcal O_F}(D_{\st}^{\ast}(V))) \cong D_{\st}^{\ast}(V).
$$
Since $D_{\st}^{\ast}$ is fully faithful, we have $V \cong V'$, completing the proof.
\end{proof}

\begin{rmk}\label{rmk:adding-coefficients}
To be accurate, the equivalence \eqref{eqn:MFMod-equiv} is constructed in \cite{Kisin-FCrystals} only when $F = \mathbb Q_p$. We use multiple references with the same technical limitation. We pause to detail one approach to resolving the issue. Later, we omit details for other functors.

First, we may define the functors $\underline D_{\mathcal O_F}$ and $\underline{\mathcal M}_{\mathcal O_F}$ using the same formulas as \eqref{eqn:MFMod-equiv}, or, equivalently, we can define them by forcing the diagram
\begin{equation*}
\xymatrix{
\MF_F^{\varphi,N}\ar[d]_-{\text{forget}} \ar@/_/[r]_-{\underline{\mathcal M}_{\mathcal O_F}} & \Mod_{\mathcal O_F}^{\varphi,N_{\nabla}} \ar@/_/[l]_-{\underline D_{\mathcal O_F}} \ar[d]^-{\text{forget}} \\
\MF_{\mathbb Q_p}^{\varphi,N} \ar@/_/[r]_-{\underline{\mathcal M}_{\mathcal O}} & \Mod_{\mathcal O}^{\varphi,N_{\nabla}} \ar@/_/[l]_-{\underline D_{\mathcal O}}
}
\end{equation*}
to commute. Here, the vertical arrows are the natural forgetful functors and the bottom arrows are as in \cite{Kisin-FCrystals}, where they are proved to be quasi-inverses. If $\mathcal M_{\mathcal O_F} \in \Mod_{\mathcal O_F}^{\varphi,N_{\nabla}}$, we thus have a natural isomorphism $\alpha : \underline{\mathcal M}_{\mathcal O_F} (\underline D_{\mathcal O_F}(\mathcal M_{\mathcal O_F})) \cong \mathcal M_{\mathcal O_F}$ in $\Mod_{\mathcal O}^{\varphi,N_{\nabla}}$. Since multiplication by $x \in F$ defines an endomorphism of $\mathcal M_{\mathcal O_F}$ in $\Mod_{\mathcal O}^{\varphi,N_{\nabla}}$ and $\alpha$ is natural, we see $\alpha$ is an isomorphism in $\Mod_{\mathcal O_F}^{\varphi,N_{\nabla}}$. Thus, $\underline{\mathcal M}_{\mathcal O_F}$ is a left quasi-inverse to $\underline D_{\mathcal O_F}$. Proving $\underline D_{\mathcal O_F}$ is a right quasi-inverse to $\underline{\mathcal M}_{\mathcal O_F}$ is analogous.
\end{rmk}

\subsection{Breuil modules}\label{subsec:breuil-kisin-modules}
Let $S_{\rm{Br}}$ be the $p$-adic completion of  the divided power envelope of $W(k)[u]$ with respect to the ideal generated by $E$. Breuil (\cite{Breuil-Griffiths}) classically identified $\MF_{\mathbb Q_p}^{\varphi,N}$ with a category of filtered $(\varphi,N)$-modules over $S_{\rm{Br}}[\frac 1 p]$. We recall this, replacing $S_{\rm{Br}}$ with a simpler ring.

One extends the Frobenius $\varphi$ to $K_0 [\![u]\!]$ via $\varphi (u) = u ^p$. We define $N= -u{d\over du}$ on $K_0 [\![u]\!]$. Let $\widehat S _E$ be the $E$-completion of $W(k)[u][\frac 1 p]$. For a subring $R \subseteq \widehat  S_E$ and $j \geq 0$, set $\Fil ^j R = R \cap E ^j \widehat S_E$. In particular, we can take $R = S := W(k)[\![u, \frac{E^p}{p}]\!]$. As a subring of $K_0[\![u]\!]$,  $S$ is closed under $\varphi$ and $N$. We define $S_\Lambda = S \otimes_{\Z_p} \Lambda$ and $S_F = S \otimes_{\Z_p} F$, extending $\varphi$, $N$, and $\Fil^{\bullet}$ linearly. 

Clearly $S \subseteq S_{\rm {Br}}\subseteq \widehat S _E$ which are compatible with the $(u ,\frac{E^p}{p})$-topology on $S$, the $p$-adic topology on $S_{\rm Br}$ and the $(E)$-topology on $\widehat S_E$. One advantage $S$ enjoys over $S_{\rm{Br}}$ is that $\Fil ^ j S_F = E^j S_F$. To see this, note that any element $f \in \Fil^j S_F$ can be uniquely written in the form $f = \sum_{i} a_i(u){E^{pi}\over p^i}$ with $a_i(u) \in K_0[u]$ a polynomial of degree strictly less than $ep$ ($e$ is the degree of $E$). Then, when $j < pi$, we have $\frac{E^{p i -j}}{p ^i}=\frac{1}{p ^{i - l}} E^{p i - pl} (\frac {E^p}{p}) ^{l}$ with $l = \lfloor \frac{p i -j}{p} \rfloor$. In this situation, $i-l$ depends only on $j$, so factoring $E^j$ out of the expression for $f$ and examining the leftover summation, one sees at once that $f \in E^j S_F$. Note as well:\ $S_F$ is an $\mathcal O_F$-algebra, and $\varphi(E) = p{\fc}$ with ${\fc} \in S^\times$. In particular, $\varphi(\lambda) \in S^\times \subseteq S_F^\times$.

The category $\MF_{S_F}^{\varphi,N}$ of filtered $(\varphi,N)$-modules over $S_F$, or {\em Breuil modules} over $S_F$, are objects $(\mathcal D,\varphi_{\mathcal D}) \in \Mod_{S_F}^{\varphi}$ such that the linearization of $\varphi_{\mathcal D}$ is an isomorphism, and $\mathcal D$ is equipped with:
\begin{itemize}
	\item a decreasing filtration $\Fil^{\bullet} \mathcal D$ by $S_F$-submodules such that $\Fil^0\mathcal D = \mathcal D$ and $\Fil^i  S_F  \cdot \Fil^j \mathcal D \subseteq \Fil^{i+j} \mathcal D$  for all $i,j \geq 0$;
	\item an operator $N_{\mathcal D}: \mathcal D \rightarrow \mathcal D$ that acts as a derivation over $N$, and 
	\begin{itemize}
		\item $N_{\mathcal D} \varphi_{\mathcal D} = p \varphi_{\mathcal D} N_{\mathcal D}$, and
		\item $N_{\mathcal D}(\Fil^i \mathcal D) \subseteq \Fil^{i-1}\mathcal D$ for all $i \geq 1$.
	\end{itemize}
\end{itemize}
A morphism in $\MF_{S_F}^{\varphi,N}$ is an $S_F$-linear map equivariant for $\varphi$, $N$, and $\Fil^{\bullet}$.

We define a functor $\underline{\mathcal D}: \MF_F^{\varphi,N} \rightarrow \MF_{S_F}^{\varphi,N}$ as follows:
\begin{itemize}
	\item $\mathcal D := \underline{\mathcal D}(D) =  S_F \otimes_{K_0\otimes_{\mathbb Q_p} F} D$ as an $S_F$-module;
	\item $\varphi_{\mathcal D} = \varphi \otimes \varphi_{D}$;
	\item $N_{\mathcal D} = N\otimes 1 + 1 \otimes N_{D}$;
	\item $\Fil^0(\mathcal D) = \mathcal D$ and 
	$$
	\Fil^{i}(\mathcal D) = \{ x \in \mathcal D \mid N_{\mathcal D}(x) \in \Fil^{i-1}\mathcal D \text{ and } (\ev_\pi\otimes 1)(x) \in \Fil^i D_K\}
	$$
	for $i \geq 1$.
\end{itemize}
Here, $\ev_\pi : S_F \rightarrow F\otimes_{\mathbb Q_p} K$ is the scalar extension of $\ev_\pi : W(k)[u] \twoheadrightarrow \Lambda_K$, the evaluation at $\pi$ map.
\begin{thm}[Breuil]\label{thm:breuil-equivalence}
The functor $\underline{\D}: \MF_{F}^{\varphi,N} \rightarrow \MF_{S_F}^{\varphi,N}$ is an equivalence of categories.
\end{thm}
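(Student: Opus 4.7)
My plan is to construct an explicit quasi-inverse $\underline{D}_{S_F}\colon \MF_{S_F}^{\varphi,N} \to \MF_F^{\varphi,N}$ defined on objects by $\underline{D}_{S_F}(\mathcal{D}) := \mathcal{D}/u\mathcal{D}$, which is naturally a finite free module over $K_0 \otimes_{\mathbb{Q}_p} F$. The induced Frobenius and monodromy descend to $\mathcal{D}/u\mathcal{D}$ because both $\varphi(u) = u^p$ and $N(u) = -u$ vanish modulo $u$, and the induced filtration on $D_K$ is defined by $\Fil^i D_K := (\ev_\pi \otimes 1)(\Fil^i \mathcal{D})$. I will then verify that $\underline{D}_{S_F}$ and $\underline{\D}$ are mutually quasi-inverse.

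\textbf{Reduction and first natural isomorphism.} As in Remark \ref{rmk:adding-coefficients}, both $\underline{\D}$ and $\underline{D}_{S_F}$ commute with the forgetful functors to the case $F = \mathbb{Q}_p$, so it suffices to treat that case. For $\underline{D}_{S_F} \circ \underline{\D} \Rightarrow \mathrm{id}$, the underlying $(\varphi,N)$-module of $\underline{D}_{S_F}(\underline{\D}(D))$ is canonically $D$ because $(S_F \otimes_{K_0 \otimes F} D)/u \cong D$. Filtration compatibility will be verified by induction on $i$:\ the inclusion $(\ev_\pi \otimes 1)(\Fil^i \underline{\D}(D)) \subseteq \Fil^i D_K$ is built into the recursive definition, and the reverse inclusion reduces to producing, for each $v \in \Fil^i D_K$, a lift $\tilde v \in \Fil^i \underline{\D}(D)$ with $(\ev_\pi \otimes 1)(\tilde v) = v$. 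Such a lift is constructed by starting from any $S_F$-linear lift of $v$ and iteratively adjusting by terms in $E \cdot \underline{\D}(D)$ whose existence is guaranteed by applying the induction hypothesis to $N$-translates.

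\textbf{Second natural isomorphism and main obstacle.} The delicate direction is $\underline{\D} \circ \underline{D}_{S_F} \Rightarrow \mathrm{id}$. Given $\mathcal{D} \in \MF_{S_F}^{\varphi,N}$, the canonical $S_F$-linear map $\psi\colon S_F \otimes_{K_0 \otimes F} (\mathcal{D}/u\mathcal{D}) \to \mathcal{D}$ is an isomorphism of underlying $\varphi$-modules by a standard $F$-isocrystal descent argument, using crucially that the linearization of $\varphi_{\mathcal{D}}$ is assumed to be an isomorphism; compatibility with $N$ is then forced by the relation $N_{\mathcal{D}}\varphi_{\mathcal{D}} = p\varphi_{\mathcal{D}} N_{\mathcal{D}}$ together with the reduction modulo $u$. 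The main obstacle will be showing $\psi$ is strictly filtered. My plan is to bypass a direct attack by comparing with Breuil's classical theorem (\cite{Breuil-Griffiths}), which establishes the analogous equivalence $\MF_{\mathbb{Q}_p}^{\varphi,N} \cong \MF_{S_{\rm{Br}}[\frac{1}{p}]}^{\varphi,N}$. Since $S \subseteq S_{\rm{Br}}$ (as $E^p/p$ is a unit multiple of the divided power $\gamma_p(E) = E^p/p!$), base change gives a functor $\mathrm{BC}\colon \MF_{S_F}^{\varphi,N} \to \MF_{S_{\rm{Br}} \otimes F}^{\varphi,N}$ with $\mathrm{BC} \circ \underline{\D}$ naturally isomorphic to Breuil's functor. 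The most delicate step will be verifying that $\mathrm{BC}$ is fully faithful on the essential image of $\underline{\D}$, which reflects the fact that Breuil's filtration (defined via divided powers of $E$) coincides, on objects of the relevant form, with the filtration generated by the powers of $E$ alone over $S_F$. Granting this, the equivalence for $\underline{\D}$ follows formally from Breuil's theorem.
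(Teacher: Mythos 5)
Your skeleton — reduction modulo $u$ as the quasi-inverse, with the filtration on $D_K$ read off via $\ev_\pi$ — is the paper's skeleton, and the counit direction you sketch by lifting and adjusting modulo $E$ is fine and follows Breuil. The problem is that both of your proposed routes for the unit direction skip the one step that requires genuinely new work.

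For the direct route: the map $\psi\colon S_F\otimes_{K_0\otimes F}(\mathcal D/u\mathcal D)\to\mathcal D$ is \emph{not} canonical, and its existence is \emph{not} a formal ``$F$-isocrystal descent.'' You must construct a $\varphi$-equivariant section $s\colon \mathcal D/u\mathcal D\to\mathcal D$, and the issue is convergence: writing the matrix of $\varphi_{\mathcal D}$ as $X$ with $X_0 = X\bmod u$, one needs $Y_n = X\varphi(X)\cdots\varphi^n(X)\,\varphi^n(X_0^{-1})\cdots X_0^{-1}$ to converge in $\Mat_d(S[\tfrac 1 p])$. Over $S_{\mathrm{Br}}[\tfrac 1 p]$ this is Breuil's Proposition 6.2.1.1, but over the strictly smaller ring $S[\tfrac 1 p]$ it must be re-proved, and doing so (via the estimate $\varphi^n(u)\,p^{nr}\to 0$ in $S[\tfrac 1 p]$) is precisely the content of the paper's Lemma \ref{lem:equivariant-analogue}, the single honestly new ingredient in the whole argument. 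Asserting this as standard is the gap. Note also that the section $s$ is already needed to make sense of your filtration formula: $(\ev_\pi\otimes 1)$ lands in $\mathcal D/E\mathcal D$, and the identification $D_K\cong\mathcal D/E\mathcal D$ used to interpret this as a filtration on $D_K$ is itself induced by $s$.

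For the base-change route: even granting both that $\mathrm{BC}\circ\underline{\D}$ is naturally Breuil's functor (this itself needs checking, since $\Fil^j S_{\mathrm{Br}}[\tfrac 1 p]$ contains divided powers $\gamma_i(E)$, not just $E^j$) and that $\mathrm{BC}$ is fully faithful on the essential image of $\underline{\D}$, you do not get essential surjectivity of $\underline{\D}$ formally. Given $\mathcal D\in\MF_{S_F}^{\varphi,N}$, Breuil's theorem produces $D$ with $\mathrm{BC}(\underline{\D}(D))\cong\mathrm{BC}(\mathcal D)$. Descending this isomorphism to $S_F$ requires full faithfulness of $\mathrm{BC}$ at the pair $(\underline{\D}(D),\mathcal D)$ — but $\mathcal D$ is not yet known to lie in the essential image of $\underline{\D}$, which is what you are trying to show. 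The deduction is circular as stated. The paper avoids this by not routing through $S_{\mathrm{Br}}$ at all: it proves the section lemma directly over $S[\tfrac 1 p]$ and then imports Breuil's remaining arguments verbatim, with that lemma replacing his Proposition 6.2.1.1.
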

Breuil proves in \cite[Section 6]{Breuil-Griffiths} that $\underline{\mathcal D}$ is an equivalence of categories when $F = \mathbb Q_p$ and $S$ is replaced by $S_{\mathrm{Br}}$. That one can replace $S_{\mathrm{Br}}$ by $S$ is known to some, but there does not appear to be a reference. The only step in the proof of Breuil that requires honestly new justification is the following analog of \cite[Proposition 6.2.1.1]{Breuil-Griffiths}. (This version is even easier to prove.)

\begin{lemma}\label{lem:equivariant-analogue}
Let $\mathcal D \in \MF_{S_F}^{\varphi,N}$ and $D = \mathcal D / u \mathcal D$. Then, there exists a unique $F\otimes_{\mathbb Q_p} K_0$-linear $\varphi$-equivariant section $s : D \rightarrow \mathcal D$ of the reduction map.
\end{lemma}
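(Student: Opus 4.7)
The plan is to handle both existence and uniqueness via a Frobenius-iteration argument that exploits $\varphi(u)=u^p$ as a contraction on the $u$-adic topology of $\mathcal D$. First I would note that the operator $\varphi_D$ induced on $D=\mathcal D/u\mathcal D$ is a bijective semilinear endomorphism of the finite free $F\otimes_{\mathbb Q_p}K_0$-module $D$: its linearization is the reduction modulo $u$ of the isomorphism $\varphi^\ast\mathcal D\risom\mathcal D$ built into the definition of $\MF_{S_F}^{\varphi,N}$.

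For uniqueness, I would take two such sections $s_1,s_2$ and set $t=s_1-s_2$. The map $t$ factors through $u\mathcal D$ and satisfies $t\circ\varphi_D=\varphi_{\mathcal D}\circ t$. Given any $x\in D$ and any $n\geq 1$, writing $x=\varphi_D^n(y)$ yields $t(x)=\varphi_{\mathcal D}^n(t(y))\in\varphi_{\mathcal D}^n(u\mathcal D)\subseteq u^{p^n}\mathcal D$. Since $\mathcal D$ is finite free over $S_F\subseteq (K_0\otimes_{\mathbb Q_p}F)[\![u]\!]$, it is $u$-adically separated, forcing $t=0$.

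For existence, I would introduce the operator $\Psi(s)(x):=\varphi_{\mathcal D}(s(\varphi_D^{-1}(x)))$ on the set of $F\otimes_{\mathbb Q_p}K_0$-linear sections. Direct checks show $\Psi(s)$ is again a section (mod $u$ it is $\varphi_D\circ\varphi_D^{-1}=\mathrm{id}_D$), is again $F\otimes K_0$-linear (the semilinearities of $\varphi_D^{-1}$ and $\varphi_{\mathcal D}$ cancel, and $\varphi$ fixes $F$), and that $\varphi$-equivariance of $s$ is precisely the condition $s=\Psi(s)$. Starting from any initial section $s_0$ (obtained by lifting a basis of $D$), I would iterate $s_{n+1}:=\Psi(s_n)$. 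The key contraction estimate is that if $(s-s')(D)\subseteq u^m\mathcal D$ then $(\Psi(s)-\Psi(s'))(D)\subseteq\varphi_{\mathcal D}(u^m\mathcal D)\subseteq u^{pm}\mathcal D$; combined with the obvious $(s_1-s_0)(D)\subseteq u\mathcal D$, induction yields $(s_{n+1}-s_n)(D)\subseteq u^{p^n}\mathcal D$, so $(s_n)$ is $u$-adically Cauchy and its limit $s$ satisfies $\Psi(s)=s$.

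The main obstacle will be checking that the Cauchy limit genuinely lives in $\mathcal D$ rather than in a larger $u$-adic completion. This reduces to the $u$-adic completeness of the coefficient ring $S_F$, which I would establish via the explicit power-series description $S=W(k)[\![u,E^p/p]\!]$; alternatively, one can sidestep the issue by constructing the section modulo each $u^n$ inductively (using the uniqueness argument at each stage to pin down a compatible system) and passing to the limit along $\mathcal D=\varprojlim_n\mathcal D/u^n\mathcal D$.
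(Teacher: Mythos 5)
Your iteration is, after translating to matrices with respect to a fixed $S_F$-basis of $\mathcal D$ and its image modulo $u$, exactly the recursion $Z_{n+1}=X\varphi(Z_n)X_0^{-1}$, $Z_0=I$, that the paper uses, and your uniqueness argument is fine. The gap is precisely the ``main obstacle'' you flag, and neither of your proposed resolutions works, because $S_F$ is \emph{not} $u$-adically complete. To see this, take $K_0=\Q_p$ and consider $g_N:=\sum_{n=1}^N\bigl(u^{p+1}/p^{2}\bigr)^{n}$. Since $u^{p+1}/p^{2}=p^{-1}\cdot u\cdot(u^p/p)\in p^{-1}S$, each partial sum lies in $S[\frac 1 p]$ and $(g_N)$ is $u$-adically Cauchy; but every $f=\sum a_m u^m$ in $S$ satisfies $v_p(a_m)\geq -\lfloor m/p\rfloor$, whereas the coefficient of $u^{n(p+1)}$ in $\lim g_N$ has valuation $-2n$, and $2n-\lfloor n(p+1)/p\rfloor=n-\lfloor n/p\rfloor\to\infty$, so the limit does not lie in any $p^{-C}S$. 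Your second alternative fails for the same reason: $\varprojlim_n\mathcal D/u^n\mathcal D$ is by definition the $u$-adic completion of $\mathcal D$, hence strictly larger than $\mathcal D$, so a compatible family of sections mod $u^n$ need not come from a genuine section into $\mathcal D$.

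What rescues the argument is tracking $p$-adic integrality together with $u$-adic decay, which is exactly what the paper does. Writing $X$ for the matrix of $\varphi_{\mathcal D}$ and $X_0=X\bmod u$, choose integers $k,\ell,m$ with $X\in p^k\Mat_d(S)$, $X_0^{-1}\in p^\ell\Mat_d(W(k))$, and $XX_0^{-1}\in I+up^m\Mat_d(S)$. One then gets the refined estimate $Z_{n+1}-Z_n\in u^{p^n}p^{n(k+\ell)+m}\Mat_d(S)$, not merely $u^{p^n}\Mat_d(S_F)$. The key observation is $u^{p^n}=p^{p^{n-1}}(u^p/p)^{p^{n-1}}\in p^{p^{n-1}}S$, so the exponential gain $p^{n-1}$ swamps the possibly negative linear term $n(k+\ell)+m$, making the differences $p$-adically summable in $\Mat_d(S)$. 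Since $S$ --- unlike $S[\frac 1 p]$ --- \emph{is} $p$-adically complete, the limit lands in $\Mat_d(S[\frac 1 p])$, which is the missing ingredient in your existence step.
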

\begin{proof}
First, suppose $F = \mathbb Q_p$ and let $(\widehat e_1, \dots, \widehat e_d)$ be an $S[\frac 1 p]$-basis of $\D$. Write $\varphi_\D (\widehat e_1 , \dots, \widehat e_d) = (\widehat e_1, \dots , \widehat e_d) X$ and set $X_0 = X \bmod u$. Then, $X \in p^k\Mat_{d}(S)$, $X_0^{-1} \in p^\ell\Mat_d(W(k))$, and $X X_0^{-1} \in I + up^m\Mat_d(S)$ for some $k,\ell,m \in \mathbb Z$. As in the proof of \cite[Proposition 6.2.1.1]{Breuil-Griffiths}, we need to show
$$Y_n : = X\varphi(X) \cdots \varphi ^n (X) \varphi ^n (X_0^{-1}) \cdots \varphi (X_0^{-1})   X_0^{-1}
$$ 
converges in $\Mat_d(S[\frac 1 p])$ as $n \rightarrow \infty$. But,  in the notation above,
$$
Y_n - Y_{n-1} \in \varphi^n(u) p^{n(k+\ell)+m} \Mat_d(S).
$$
Since $\varphi^n(u) p^{nr} \rightarrow 0$ in $S[\frac 1 p]$ for any fixed $r$, we see that $Y_n - Y_{n-1} \rightarrow 0$ in $\Mat_d(S[\frac 1 p])$, as needed. 

If $F \neq \mathbb Q_p$, the proof already given implies there exists a unique $K_0$-linear $\varphi$-equivariant section $s: D \rightarrow \mathcal D$. If $x \in F^\times$ then $x^{-1} s x$ also $K_0$-linear and $\varphi$-equivariant and thus $s$ is $F$-linear.
\end{proof}

\begin{proof}[Proof of Theorem \ref{thm:breuil-equivalence}]
 Define  $\underline D_{S_F} : \MF_{S_F}^{\varphi,N} \rightarrow \MF_{F}^{\varphi,N}$ as follows. Set $D = \underline D_{S_F}(\mathcal D)  = \mathcal D/u\mathcal D$ with its induced action of $\varphi$ and $N$. For $s$ in Lemma \ref{lem:equivariant-analogue}, $(\ev_{\pi}\otimes 1)\circ s : D \rightarrow \mathcal D/E \mathcal D$ induces a canonical isomorphism $D_K \cong \mathcal D/E \mathcal D$. The filtration $\Fil^i(D_K)$ is the pullback of the filtration on $\mathcal D / E \mathcal D$ defined as the image $\Fil^i(\mathcal D) \rightarrow \mathcal D / E\mathcal D$. The arguments in \cite{Breuil-Griffiths}, with Lemma \ref{lem:equivariant-analogue} replacing Proposition 6.2.1.1 of {\em loc.\ cit.}, show $\underline D_{S[\frac 1 p ]}$ and $\underline{\mathcal D}$ are quasi-inverses when $F = \mathbb Q_p$. In general, see Remark \ref{rmk:adding-coefficients}.
\end{proof}

\subsection{Comparison} \label{sec:comparison}

We now assume that $F$ contains a subfield isomorphic to the Galois closure of $K$ (see Lemma \ref{lem-Fil-free}). In practice, as in Sections \ref{sec:explicit-family} and \ref{sec:descent}, we take $K = \mathbb Q_p$ so this is no hindrance.

In the prior sections, we have described equivalences
\begin{equation}\label{eqn:Liu-equiv}
\xymatrix{
\Mod_{\mathcal O_F}^{\varphi,N_{\nabla}} \ar[r]^-{\cong} & \MF_F^{\varphi,N} \ar[r]^-{\cong} & \MF_{S_F}^{\varphi,N}.
}
\end{equation}
An analog of \cite[Corollary 3.2.3]{Liu-LatticesInSemiStable} allows for a description of the composition that, unfortunately, is not practical for calculations. Below, though, we explain how to determine $\underline{\mathcal M}_{\mathcal O_F}(D)\otimes_{\mathcal O_F} S_F$ as a $\varphi$-module over $S_F$ from $D$, up to determining $\mathcal D = \underline{\mathcal D}(D)$.  A key technical point, which follows from the next lemma, is that filtrations on Breuil modules over $S_F$ are always free, in contrast to the filtrations on objects in $\MF_F^{\varphi,N}$ (cf.\ \cite[Exemple 3.1.1.4]{BreuilMezard-Multiplicities}).

\begin{lemma}\label{lem-Fil-free}
Suppose that $\mathcal N$ is a finite free $S_F$-module and $\mathcal H \subseteq \mathcal N$ is an $S_F$-submodule such that $E^j \mathcal N \subseteq \mathcal H$ for some $j \geq 0$. Then, $\mathcal H$ is finite free over $S_F$.
\end{lemma}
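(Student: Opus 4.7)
The plan is to induct on $j$. For $j = 0$, the hypothesis forces $\mathcal H = \mathcal N$, which is free. For the inductive step, assume the result for $j - 1$ and set $\mathcal H' = \{x \in \mathcal N : E x \in \mathcal H\}$. Since $S_F$ embeds into the integral domain $\mathcal O_F$, the element $E$ is a non-zero-divisor on $\mathcal N$, and multiplication by $E$ gives an $S_F$-linear isomorphism $\mathcal H' \cong \mathcal H \cap E\mathcal N$. The hypothesis $E^j\mathcal N \subseteq \mathcal H$ yields $E^{j-1}\mathcal N \subseteq \mathcal H' \subseteq \mathcal N$, so by the inductive hypothesis $\mathcal H'$, and hence $\mathcal H \cap E\mathcal N$, is finite free over $S_F$ of the same rank $r$ as $\mathcal N$.

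To finish, I would study the short exact sequence
\begin{equation*}
0 \to \mathcal H \cap E\mathcal N \to \mathcal H \to \overline{\mathcal H} \to 0,
\qquad
\overline{\mathcal H} \hookrightarrow \mathcal N/E\mathcal N.
\end{equation*}
The assumption that $F$ contains the Galois closure of $K$ gives $S_F/ES_F \cong K \otimes_{\mathbb Q_p} F \cong \prod_\sigma F$, a finite product of copies of $F$ indexed by embeddings $\sigma \colon K \hookrightarrow F$ fixing $K_0$; correspondingly, $E$ factors in $S_F$ as $\prod_\sigma (u - \pi_\sigma)$, with each $(u - \pi_\sigma)$ a principal maximal ideal. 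The $S_F$-module $\mathcal N / \mathcal H$ (annihilated by $E^j$) then decomposes as a direct sum of cyclic modules $S_F/(f_\ell)$ with $f_\ell = \prod_\sigma (u - \pi_\sigma)^{a_\ell^\sigma}$ for some $0 \leq a_\ell^\sigma \leq j$. Lifting cyclic generators to $\mathcal N$ and combining with a basis of $\mathcal H \cap E\mathcal N$, a Smith-normal-form-style argument should yield a free basis of $\mathcal H$.

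The main obstacle is this last diagonalization: one must show that lifts of the cyclic generators of $\mathcal N/\mathcal H$ in $\mathcal N$ can be extended to a full $S_F$-basis of $\mathcal N$, which is not automatic over a general commutative ring. This step rests on the principal and pairwise coprime structure of the maximal ideals $(u - \pi_\sigma) \subseteq S_F$ (so that $\pi_{\sigma'} - \pi_\sigma \in F^\times$ is available as a unit), which enables the Bezout-type row and column manipulations on presentations of $S_F$-modules supported on $V(E)$ that underpin the diagonalization.
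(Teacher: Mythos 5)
Your reduction to the case $\mathcal H \cap E\mathcal N$ free via the module $\mathcal H' = \{x : Ex \in \mathcal H\}$ is sound, but the final step is where the argument is incomplete, as you yourself flag. The short exact sequence you set up, $0 \to \mathcal H \cap E\mathcal N \to \mathcal H \to \overline{\mathcal H} \to 0$, has a free \emph{sub}module and a torsion quotient; there is no splitting, and freeness of $\mathcal H$ is genuinely an extension problem. The ``Smith-normal-form-style'' diagonalization you invoke is not a theorem over $S_F$ (nor over the factor domains $S_{F,\sigma}$, which are not PIDs), and the structural facts you cite --- that $(u - \tau(\pi))$ are principal, pairwise comaximal maximal ideals --- are necessary but not sufficient; one would additionally need something like surjectivity of $\GL_r(S_{F,\sigma}) \to \prod_\tau \GL_r(F)$ or a Bezout property restricted to modules supported on $V(E)$, and none of this is established. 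Moreover, the asserted decomposition of $\mathcal N/\mathcal H$ into cyclic modules $S_F/(f_\ell)$ is itself already a Smith-normal-form statement, so the plan is circular as written.

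The paper sidesteps this entirely with a different reduction. Rather than intersecting with $E\mathcal N$ (which produces a torsion quotient), it \emph{projects}: given a short exact sequence $0 \to \mathcal N'' \to \mathcal N \xrightarrow{f} \mathcal N' \to 0$ of free modules, one sets $\mathcal H' = f(\mathcal H)$ and $\mathcal H'' = \ker f \cap \mathcal H$, observes that both inherit the hypothesis $E\mathcal N \subseteq \mathcal H$, and once $\mathcal H'$ and $\mathcal H''$ are free the sequence $0 \to \mathcal H'' \to \mathcal H \to \mathcal H' \to 0$ splits because $\mathcal H'$ is projective. Iterating reduces to $\mathcal N = S_F$, i.e., to showing an ideal $I$ with $E \in I \subseteq S_F$ is principal, which is settled componentwise over $S_F = \prod_\sigma S_{F,\sigma}$ by computing $I_\sigma$ modulo $\sigma(E)$ and exhibiting an explicit principal ideal $\prod_{\tau \in T}(u - \tau(\pi))S_{F,\sigma}$ with the same image. (The reduction to $j=1$ in the paper is also slightly different from yours --- it uses $\mathcal H_i = \mathcal H + E^i\mathcal N$ rather than $\{x : Ex \in \mathcal H\}$ --- but both work for that step.) If you want to rescue your outline, the cleanest fix is to replace the intersection-based induction with the projection-based rank reduction so that the key exact sequence has a projective quotient, at which point the diagonalization you wanted reduces to the rank-one statement.
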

\begin{proof}
We may assume $j = 1$. Indeed, consider the nested sequence $\mathcal H_i = \mathcal H + E^i \mathcal N$ of $S_F$-modules, which satisfy  $E \mathcal H_i \subseteq \mathcal H_{i+1} \subseteq \mathcal H_i$. By the $j=1$ case we deduce $\mathcal H_1 \subseteq \mathcal N$ is free, and then $\mathcal H_2$, and so on until $\mathcal H_j = \mathcal H$ is free. We may also assume $\mathcal N \cong S_F$. Indeed, if $ 0 \to \cN'' \to \cN \overset f \to  \cN' \to 0$ is an exact sequence of finite free $S_F$-modules, then $\cH' = f (\cH)$ and $\cH'' = \ker (f) \cap \cH$ satisfy $E \cN'' \subseteq \cH''$ and $E\cN' \subseteq \cH'$. So, if both $\cH''$ and $\cH'$ are free, then $\mathcal H \cong \mathcal H'' \oplus \mathcal H'$ is free as well.

We have reduced to proving:\ if $I \subseteq S_F$ is an ideal containing $E$, then $I$ is free. Since $F$ contains a subfield isomorphic to the Galois closure of $K$, we may decompose $S_F = \prod_{\sigma \in \Hom(K_0, F)} S_{F, \sigma}$ where $S_{F, \sigma} = \Lambda[\![u, \frac{\sigma(E)^p}{p}]\!][\frac 1 p]$ is a domain.   The ideal $I$ decomposes as a product of ideals $I_\sigma$ such that $\sigma(E)S_{F,\sigma} \subseteq I_{\sigma}$. Since $\sigma(E)$ is non-zero, it suffices to show each $I_{\sigma}$ is principal.  Write $\Hom_{\sigma}(K,F)$ for the embeddings $\tau: K \rightarrow F$ lifting $\sigma$. Then, we have a canonical isomorphism
$$
S_{F, \sigma}/\sigma(E)S_{F,\sigma} \cong K \otimes_{K_0, \sigma} F \cong F^{\Hom_{\sigma}(K,F)}.
$$ 
So, $I_{\sigma}/\sigma(E)S_{F,\sigma} \cong F^T$ for some subset $T \subseteq \Hom_{\sigma}(K,F)$. But, $J_T = \prod_{\tau \in T} (u-\tau(\pi)) \cdot S_F$ also contains $\sigma(E)S_{F,\sigma}$ and $J_T/\sigma(E)S_{F,\sigma} \cong F^T$. Thus $I_{\sigma} = J_T$ is principal, completing the proof.
\end{proof}

We now consider an {\em ad hoc} category of ``Breuil modules without monodromy''. Let $\MF_{S_F}^{\varphi, h }$ denote the category whose objects are $(\mathcal D, \varphi_{\mathcal D}) \in \Mod_{S_F}^{\varphi}$ such that the linearization of $\varphi_{\mathcal D}$ is an isomorphism, and $\mathcal D$ is equipped with a finite {\em free} $S_F $-submodule  $\Fil^h \mathcal D \subseteq \D$ such that $\Fil^h S_F \cdot \mathcal D \subseteq \Fil ^h \D$.  By Lemma \ref{lem-Fil-free} there is a natural forgetful functor $
\MF_{S_F}^{\varphi,N} \rightarrow \MF_{S_F}^{\varphi,h}$. 

Now define $\underline{\D}' : \Mod_{S_F}^{\varphi, \leq h } \to \MF_{S_F}^{\varphi,h }$ by declaring $\underline{\mathcal D}'(\mathcal M) = S_F\otimes_{\varphi,S_F} \mathcal M$ as an $S_F$-module, and
\begin{itemize}
\item $\varphi_{\underline{\mathcal D}'(\cM)}= \varphi \otimes \varphi _{\cM}$, and
\item $\Fil ^h \underline{\D}'(\cM ) = \{ x \in  \underline{\D'}(\cM ) \mid (1 \otimes \varphi_{\cM})(x) \in \Fil ^h S_F\cdot \cM\}$.
\end{itemize} 
Since $E^h \underline{\mathcal D}'(\mathcal M) \subseteq \Fil^h\underline{\mathcal D}'(\mathcal M)$, Lemma \ref{lem-Fil-free} implies $\Fil ^h \underline{\D}'(\cM )$ is finite free over $S_F$.
 
\begin{prop}\label{prop-equiv2}
The functor $ \underline{\D}'$ is an equivalence. 
\end{prop}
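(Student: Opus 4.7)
The plan is to exhibit an explicit quasi-inverse $G$ to $\underline{\mathcal{D}}'$. The key preliminary observation is that $\varphi(E) = p\fc$ is a unit in $S_F$ (since $\fc \in S^\times$ and $p$ is a unit in $S_F$), so $\varphi_{\mathcal{D}}$ extends to a $\varphi$-semilinear endomorphism $\varphi_V$ of $V := \mathcal{D}[1/E]$. Given $(\mathcal{D}, \varphi_{\mathcal{D}}, \Fil^h \mathcal{D}) \in \MF_{S_F}^{\varphi,h}$, I would define
$$G(\mathcal{D}) := \mathcal{M} = E^{-h} \Fil^h \mathcal{D} \subseteq V,$$
an $S_F$-submodule that contains $\mathcal{D}$ (since $E^h \mathcal{D} \subseteq \Fil^h \mathcal{D}$) and is finite free since $\Fil^h \mathcal{D}$ is. The motivating picture is that when $\mathcal{D} = \underline{\mathcal{D}}'(\mathcal{M})$, the map $1 \otimes \varphi_{\mathcal{M}}$ realises $\mathcal{M}$ as a lattice in $V$ satisfying $\mathcal{D} \subseteq \mathcal{M} \subseteq E^{-h}\mathcal{D}$ with $E^h \mathcal{M} = \Fil^h \mathcal{D}$.

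The heart of the argument is to show that $\varphi_V$ restricts to a $\varphi$-semilinear $\varphi_{\mathcal{M}}$ on $\mathcal{M}$ making $(\mathcal{M}, \varphi_{\mathcal{M}})$ a Kisin module of height $\leq h$, and then to produce an isomorphism $\Phi: \underline{\mathcal{D}}'(\mathcal{M}) \cong \mathcal{D}$. Fixing $S_F$-bases $\vec f$ of $\mathcal{D}$ and $\vec g$ of $\Fil^h \mathcal{D}$ and writing $\vec g = \vec f M$ and $\varphi_{\mathcal{D}}(\vec f) = \vec f B$, the hypotheses give $B \in \GL_d(S_F)$ and $E^h M^{-1} \in \Mat_d(S_F)$. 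In the basis $\vec e := E^{-h}\vec g$ of $\mathcal{M}$, a direct computation yields
$$\varphi_V(\vec e) = \vec e \cdot A, \qquad A := (E/\varphi(E))^h\, M^{-1} B \varphi(M) = \varphi(E)^{-h}(E^h M^{-1})\, B\, \varphi(M).$$
Since $\varphi(E)^{-h} \in S_F^\times$, we see $A \in \Mat_d(S_F)$. The matrix $X := \varphi(E^h M^{-1})\, B^{-1} M$ also has entries in $S_F$, and the identity $AX = E^h I$ confirms that the cokernel of the linearization of $\varphi_{\mathcal{M}}$ is killed by $E^h$. For $\Phi$, take $\Phi := 1 \otimes \varphi_{\mathcal{M}}$; the same calculation shows it has matrix $C := \varphi(E)^{-h} B\, \varphi(M) \in \Mat_d(S_F)$, landing in $\mathcal{D} \subseteq \mathcal{M}$. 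Writing $MN = E^h I$ and applying $\varphi$ gives $\varphi(\det M)\,\varphi(\det N) = \varphi(E)^{hd} \in S_F^\times$, so $\varphi(\det M) \in S_F^\times$, whence $\det C \in S_F^\times$ and $\Phi$ is an $S_F$-linear isomorphism. Compatibility with $\varphi$ reduces to the matrix identity $C\varphi(A) = B \varphi(C)$, and compatibility with $\Fil^h$ is built in: $\Fil^h \underline{\mathcal{D}}'(\mathcal{M}) = \Phi^{-1}(E^h \mathcal{M}) = \Phi^{-1}(\Fil^h \mathcal{D})$.

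The other composition $G \circ \underline{\mathcal{D}}' \cong \mathrm{id}$ is easier: for $\mathcal{M}_0 \in \Mod_{S_F}^{\varphi, \leq h}$, the linearization of $\varphi_{\mathcal{M}_0}$ identifies $V = \mathcal{M}_0[1/E]$, under which $\Fil^h(\varphi^*\mathcal{M}_0) = (1 \otimes \varphi_{\mathcal{M}_0})^{-1}(E^h \mathcal{M}_0)$ corresponds to $E^h\mathcal{M}_0$ and hence $E^{-h}\Fil^h(\varphi^*\mathcal{M}_0)$ to $\mathcal{M}_0$. Functoriality in both directions is routine because morphisms preserve $\Fil^h$, so their extensions to $V$ preserve $E^{-h}\Fil^h$. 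The main obstacle is the matrix computation just sketched: the correct formula for $A$ is the only nonobvious feature, and one must verify simultaneously that $A \in \Mat_d(S_F)$ and that it admits a right inverse $X$ with $AX = E^h I$ and $X \in \Mat_d(S_F)$. This is precisely the content that $\Fil^h \mathcal{D}$ carries, allowing one to ``Frobenius-descend'' $\mathcal{D}$ through the non-surjective map $\varphi: S_F \to S_F$.
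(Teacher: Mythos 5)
Your proof is correct and takes a genuinely different route from the paper. The paper proves full faithfulness and essential surjectivity as two separate matrix arguments: for faithfulness/fullness it shows that a morphism $f$ with matrix $X$ satisfying $XB=B'Y$ forces $X=\varphi(Y)$ (using $\varphi(E)\in S_F^\times$), so $f$ arises from a unique $\mathfrak f$; for essential surjectivity it builds $\mathcal M$ from $\mathcal D$ by explicitly constructing a basis $(f_i)$ and writing down $A'$. Your approach instead defines an intrinsic, coordinate-free quasi-inverse $G(\mathcal D)=E^{-h}\Fil^h\mathcal D$ inside $\mathcal D[1/E]$, where the Frobenius $\varphi_V$ extends because $\varphi(E)\in S_F^\times$. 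What this buys you is that functoriality of $G$ and hence full faithfulness of $\underline{\mathcal D}'$ become transparent (morphisms preserve $\Fil^h$, so preserve $E^{-h}\Fil^h$), trading the paper's ``descend $X$ to $Y=\varphi^{-1}(X)$'' argument for a routine naturality check. What you pay is that you still must verify the same kind of matrix facts the paper does in its essential surjectivity step: that $A=\varphi(E)^{-h}(E^hM^{-1})B\varphi(M)$ is integral, that it has an $S_F$-integral right inverse $X$ with $AX=E^hI$, and that the linearization matrix $C=\varphi(E)^{-h}B\varphi(M)$ is in $\GL_d(S_F)$; the crucial observations ($\varphi(E)\in S_F^\times$ and $\varphi(\det M)\in S_F^\times$ via $\varphi(\det M)\varphi(\det N)=\varphi(E)^{hd}$) are exactly the ones the paper also exploits. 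I checked the identities $AX=E^hI$ and $C\varphi(A)=B\varphi(C)$ and they hold; the identification of $G(\underline{\mathcal D}'(\mathcal M_0))$ with $\mathcal M_0$ via the linearization of $\varphi_{\mathcal M_0}$ (an isomorphism after inverting $E$ because the cokernel is $E^h$-torsion and $\det$ of the linearization is a non-zero-divisor) is also sound. In short, same matrix core, cleaner categorical packaging.
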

\begin{proof} 
We first show $\underline{\D}'$ is fully faithful. Suppose $\cM$ and $\cM'$ are in $\Mod_{S_F} ^{\varphi , \leq h}$. Write $\D:=  \underline{\D}'(\cM )$ and $\D' :=  \underline{\D}'(\cM')$. Choose a basis $(e_1 , \dots , e_d)$ of $\cM$ and write $\varphi_{\mathcal M} (e_1, \dots , e_d ) = (e_1 , \dots , e_d ) A$ with $A \in \Mat_d(S_F)$. Since $\cM$ has height $\leq h$, there exists a matrix $B \in \Mat_d(S_F)$ such that $AB = BA = E^h I _d$. By assumption, $\Fil^h\D$ has basis $(\alpha_1,\dotsc,\alpha_d) = (\wt e_1 , \dots , \wt e_d ) B$ where $\wt e_i = 1 \otimes e_i \in \D$ compose a basis of $\D$. Similarly, we get $A', B'$ and $\wt e_i'$ from a basis $(e_1', \dots, e_{d'}')$ of $\mathcal M'$. 

Now suppose $f: \D \rightarrow \D'$ is a morphism in $\MF_{S_F}^{\varphi,h}$. We write $f(\wt e_1, \dots , \wt e_d)= (\wt e' _1, \dots , \wt e'_{d'}) X$ for $X \in \Mat_d(S_F)$. Since $f$ is $\varphi$-equivariant, we have $X \varphi (A) = \varphi (A') \varphi (X)$, and, since $f(\Fil^h\D)\subseteq \Fil^h\D'$, we have $X B = B' Y$ for some $Y \in \Mat_d(S_F)$. Using $AB = BA = E^h I_d  $ and $ A'B'= B'A'= E^h I_{d'}$, we see $\varphi(Y)\varphi(E^h) = X \varphi(E^h)$, and so $X = \varphi(Y)$ because $\varphi(E) \in S_F^\times$. It follows that $Y A = A' \varphi (Y)$. Define $\mathfrak f : \cM \to \cM'$ by $\mathfrak f (e_1, \dots , e_d)= (e'_1, \dots e'_{d'}) Y$. Then, $\mathfrak f$ is $\varphi$-equivariant and $f = \underline{\D}'(\mathfrak{f})$ since $X = \varphi(Y)$. This shows $\underline{\mathcal D}'$ is full, and since $Y$ determines $X$, we also see $\underline{\mathcal D}'$ is faithful.
	
	Now we prove  $ \underline{\D}'$ is  essentially surjective. Given a $\D \in \MF ^{\varphi, h}_{S_F}$, choose bases $(e_1 , \dots , e_d)$ of $\D$ and $(\alpha _1 , \dots , \alpha _d) $ of $\Fil ^h \D$. Write $ (\alpha _1, \dots , \alpha _d) = (e_1, \dots , e_d) B$ and $\varphi_{\D} (e_1, \dots,  e_d)= (e_1, \dots , e_d) X$ with $\det (X) \in S_F^\times$. Since $E^h \D \subseteq \Fil ^h \D$, there exists $A\in\Mat_d(S_F)$ such that $AB = BA = E^h I_d$. Since $\varphi(E)=p\fc \in S_F^\times$, we see $X\varphi(B) \in \GL_d(S_F)$, whereas $\varphi_{\D} (\alpha _1 , \dots , \alpha_d) = (e_1 , \dots , e_d) X \varphi (B)$. Thus $(f_1,\dotsc,f_d) = (e_1,\dotsc,e_d) X \varphi(B) p^{-h}\fc^{-h}$ is a basis of $\D$ and $\varphi_\D (\alpha_1 , \dots , \alpha _d) = (f_1, \dots , f_d) p ^ h \fc ^h$.  Finally, $(\alpha_1, \dots , \alpha _d) = (f_1, \dots , f_d) B ' $ where $B' = Y B $ and $Y= ( X\varphi (B) p^{-h} \fc ^{-h})^{-1}$, so there exists an $A'$ such that $A' B' = B' A' = E^h I_d$. Now define $\cM=\bigoplus_{i=1}^d S_F \mathfrak f_i$ and	set $\varphi_{\cM}({\mathfrak f}  _1 , \dots , {\mathfrak f} _d) = ({\mathfrak f}  _1 , \dots , \mathfrak f_d) A'$. Then, $\cM\in \Mod_{S_F} ^{\varphi, \leq h}$ and $\underline{\D'}(\cM)= \D$ (set $f_i = 1 \otimes \mathfrak f_i$).
	\end{proof}

We now reach the main theorem of this section, which provides a mechanism to calculate a finite height $\varphi$-module over $S_F$ explicitly from $D \in \MF_{F}^{\varphi,N}$. We write $\varphi(E) = p\fc$ with $\fc \in S^\times$ as above.
\begin{thm} \label{thm:connect} 
Suppose $D \in \MF_{F}^{\varphi,N}$. Write $\D' \in \MF^{\varphi,h}_{S_F}$ for the image of $\underline{\D}(D)$ under the natural forgetful functor and $\mathcal M = \underline{\mathcal M}_{\mathcal O_F}(D) \otimes_{\mathcal O_F} S_F$. Then, there is a natural isomorphism $\underline{\mathcal D}'(\mathcal M) \cong \mathcal D'.$

In particular, $\mathcal M$ is recovered from $D$ via the following steps:
\begin{enumerate}
\item Select $S_F$-bases $(e_1,\dotsc,e_d)$ of $\mathcal D = \underline{\mathcal D}(D)$ and $(\alpha_1,\dotsc,\alpha_d)$ of $\Fil^h \D$.
\item Let $\varphi_\D (e_1, \dots , e_d) = (e_1 , \dots, e_d) X$ and $(\alpha_1, \dots, \alpha_d) = (e_1, \dots , e_d) B$ with $X,B \in \Mat_d(S_F)$.
\item Then, $\cM$ has an $S_F$-basis $(\mathfrak f_1,\dotsc,\mathfrak f_d)$ in which $\varphi_{\cM}({\mathfrak f}  _1 , \dots , {\mathfrak f} _d) = ({\mathfrak f}  _1 , \dots , \mathfrak f_d) A$, where
$$  
A = E^h B^{-1} X \varphi (B) p^{-h}\fc^{-h}.
$$
\end{enumerate}
\end{thm}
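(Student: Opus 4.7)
The plan is to establish the natural isomorphism $\underline{\mathcal D}'(\mathcal M) \cong \mathcal D'$ first, and then to read off the explicit matrix formula as a direct consequence of the essential-surjectivity construction in Proposition \ref{prop-equiv2}.

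For the isomorphism, I would begin from Kisin's construction of $\mathcal M_{\mathcal O_F} = \underline{\mathcal M}_{\mathcal O_F}(D)$, which provides a canonical $\varphi$-equivariant identification of $\varphi^*\mathcal M_{\mathcal O_F}[\lambda^{-1}]$ with $(\mathcal O_F \otimes_{K_0 \otimes F} D)[\lambda^{-1}]$ and characterizes $\varphi^*\mathcal M_{\mathcal O_F}$ inside via a Hodge-filtration condition on $D_K$. Base changing along $\mathcal O_F \hookrightarrow S_F$ and exploiting that $\varphi(\lambda) \in S_F^\times$ (as recorded in Section \ref{subsec:breuil-kisin-modules}, since $\varphi(E) = p\fc$ with $\fc \in S^\times$), one extracts a natural $S_F$-linear, $\varphi$-equivariant isomorphism
$$
\Phi \colon \underline{\mathcal D}'(\mathcal M) = \varphi^*\mathcal M \xrightarrow{\ \sim\ } S_F \otimes_{K_0 \otimes F} D = \mathcal D
$$
on the underlying $\varphi$-modules. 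What remains is to verify that $\Phi$ carries the Kisin-style filtration $\Fil^h \underline{\mathcal D}'(\mathcal M) = \{x : (1 \otimes \varphi_{\mathcal M})(x) \in E^h \mathcal M\}$ onto the Breuil-style $\Fil^h \mathcal D = \Fil^h \mathcal D'$.

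Once the isomorphism is in hand, the explicit formula in (1)--(3) follows by tracing through the essential-surjectivity portion of the proof of Proposition \ref{prop-equiv2}. Selecting bases $(e_i)$ of $\mathcal D$ and $(\alpha_i)$ of $\Fil^h \mathcal D$ and writing $\varphi_\mathcal D(e) = (e) X$ and $(\alpha) = (e) B$, the construction in that proof produces an $S_F$-basis $(\mathfrak f_i)$ of the preimage object $\mathcal M' \in \Mod_{S_F}^{\varphi,\leq h}$ with $\varphi$-matrix exactly $A = E^h B^{-1} X \varphi(B) p^{-h}\fc^{-h}$. Full faithfulness of $\underline{\mathcal D}'$ then lifts the natural isomorphism $\underline{\mathcal D}'(\mathcal M) \cong \mathcal D'$ to $\mathcal M' \cong \mathcal M$, so this matrix describes $\varphi_\mathcal M$ in the transported basis.

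The main technical hurdle will be the filtration comparison. Breuil's $\Fil^h \mathcal D$ is defined by the inductive recipe $\{x : N(x) \in \Fil^{h-1}\mathcal D,\ \ev_\pi(x) \in \Fil^h D_K\}$, which explicitly invokes the monodromy operator $N$ that we discard when passing to $\MF_{S_F}^{\varphi, h}$, whereas Kisin's filtration on $\varphi^*\mathcal M$ is phrased purely through the $E^h$-divisibility of the linearized Frobenius. Showing the two agree under $\Phi$ requires extracting a monodromy-free description of the top Breuil piece and aligning it with Kisin's Hodge-filtered characterization of $\mathcal M_{\mathcal O_F}$; this is where I would concentrate the bookkeeping, using the specifics of Kisin's comparison isomorphism and the interplay between $\ev_\pi$ and $\varphi$.
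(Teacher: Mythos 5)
Your high-level strategy is the right one, and it closely parallels the paper's actual argument: invoke Kisin's comparison map $\xi\colon \mathcal O_F\otimes D\to \mathcal M_{\mathcal O_F}$ (with $\lambda$-torsion cokernel), base change to $S_F$ where $\varphi(\lambda)$ becomes a unit, and thereby obtain an isomorphism of underlying $\varphi$-modules; then read off the matrix formula from the essential-surjectivity half of Proposition~\ref{prop-equiv2}. That last reduction is exactly what the paper does, and your observation that full faithfulness of $\underline{\mathcal D}'$ transports the matrix from the constructed preimage to $\mathcal M$ is correct.

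The problem is that you explicitly defer the entire content of the theorem. Everything before ``the main technical hurdle will be the filtration comparison'' is routine; the theorem \emph{is} the assertion that Kisin's filtration $\{x : (1\otimes\varphi_{\mathcal M})(x)\in E^h\mathcal M\}$ matches Breuil's inductively-defined filtration $\{x : N_{\mathcal D}(x)\in\Fil^{h-1}\mathcal D,\ \ev_\pi(x)\in\Fil^h D_K\}$. You identify this as the hard part and say you ``would concentrate the bookkeeping'' there, but you do not carry it out, nor do you sketch how. Since the recipe for $\Fil^h\mathcal D$ fundamentally invokes the monodromy operator $N_{\mathcal D}$ (which you have discarded in passing to $\MF_{S_F}^{\varphi,h}$) and a step-by-step descent through $\Fil^{h-1},\dotsc,\Fil^1$, the comparison is genuinely nontrivial; it is not a ``bookkeeping'' matter.

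The paper handles this by \emph{not} working directly in the forgetful category. It first defines an auxiliary functor $\underline{\mathcal D}_{\mathcal O_F}\colon\Mod_{\mathcal O_F}^{\varphi,N_\nabla}\to\MF_{S_F}^{\varphi,N}$ equipped with the Frobenius-divisibility filtration \emph{and} a monodromy $N_{\mathcal D}=N\otimes 1 + \tfrac{p}{\varphi(\lambda)}\otimes N_\nabla$, and then establishes that the triangle $\underline{\mathcal D}\circ\underline D_{\mathcal O_F}\cong\underline{\mathcal D}_{\mathcal O_F}$ commutes. Crucially, this commutativity is not proved ab initio: for $F=\Q_p$ and $S_{\mathrm{Br}}$ in place of $S$ it is \cite[Corollary 3.2.3]{Liu-Torsion}, and the only adjustment needed is to check that Kisin's $\xi$ still gives an isomorphism after $\otimes_{K_0}S[1/p]$ (using $\varphi(\lambda)\in S^\times$, exactly as you observed). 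So the filtration-and-monodromy compatibility you are deferring is precisely what Liu already proved. If you intend to reprove it rather than cite it, you need to actually run the inductive comparison in both directions (what Remark~\ref{rmk-fil-ev} records as $\Fil^{i+1}\mathcal D/E\Fil^i\mathcal D\cong\Fil^{i+1}D_K$ is a shadow of this), and your proposal does not do so.
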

\begin{proof}
To start, once the isomorphism $\underline{\mathcal D}'(\mathcal M) \cong \mathcal D'$ is justified, the ``in particular'' follows by tracing through the second half of the proof of Proposition \ref{prop-equiv2}.

For $\mathcal M_{\mathcal O_F} \in \Mod_{\mathcal O_F}^{\varphi,N_{\nabla}}$ we define $\mathcal D = \underline{\mathcal D}_{\mathcal O_F}(\mathcal M_{\mathcal O_F}) = S_F \otimes_{\varphi,\mathcal O_F} \mathcal M_{\mathcal O_F}$, which is a finite free $S_F$-module, and equip it with the following structure of a Breuil module over $S_F$:
\begin{itemize}
\item $\varphi_{\mathcal D} = \varphi \otimes \varphi_{\mathcal M}$;
\item $N_{\mathcal D} = N \otimes 1 + {p\over \varphi(\lambda)} \otimes N_{\nabla}$;
\item $\Fil^i(\mathcal D) = \{x \in \mathcal D \mid (1 \otimes \varphi_{\mathcal M})(x) \in \Fil^i S_F \otimes_{\mathcal O_F} \mathcal M_{\mathcal O_F} \}$.
\end{itemize}
Following the proof of \cite[Proposition 3.2.1]{Liu-Torsion}, replacing $S$ by $S_{\mathrm{Br}}$ and adding linear $F$-coefficients, we see $\underline{\mathcal D}_{\mathcal O_F}: \Mod_{\mathcal O_F}^{\varphi,N_{\nabla}} \rightarrow \MF_{S_F}^{\varphi,N}$ defines a functor. Moreover, if $\mathcal M_{\mathcal O_F}$ has height $\leq h$, then
\begin{equation*}
\underline{\mathcal D}_{\mathcal O_F}(\mathcal M_{\mathcal O_F}) \cong \underline{\mathcal D}'(\mathcal M_{\mathcal O_F}\otimes_{\mathcal O_F} S_F)
\end{equation*}
in the category $\MF_{S_F}^{\varphi,h}$. Thus, it remains to show that  $\underline{\mathcal D}_{\mathcal O_F}$ makes the diagram of functors
\begin{equation}\label{eqn:triangle-diagram}
\xymatrix{
\MF^{\varphi,N}_F \ar[r]^-{\underline{\mathcal D}} & \MF_{S_F}^{\varphi,N}\\
\Mod_{\mathcal O_F}^{\varphi,N_{\nabla}} \ar[u]^-{\underline D_{\mathcal O_F}} \ar[ur]_-{\underline{\mathcal D}_{\mathcal O_F}}
}
\end{equation}
commute as well. (In particular, $\underline{\mathcal D}_{\mathcal O_F}$ is an equivalence.) It is enough to check this when $F = \mathbb Q_p$ (by Remark \ref{rmk:adding-coefficients}). In that case, if $S$ is replaced by $S_{\mathrm{Br}}$, this is the statement of \cite[Corollary 3.2.3]{Liu-Torsion}. The proof in {\em loc.\ cit.}\ goes through here with only one adjustment. Namely, the isomorphism $S_{\mathrm{Br}}[\frac 1 p]\otimes_{K_0} \underline D_{\mathcal O}(\mathcal M_{\mathcal O}) \cong S_{\mathrm{Br}}[\frac 1 p]\otimes_{\varphi,\mathcal O} \mathcal M_{\mathcal O}$ implicit in the first two displayed equations of {\em loc.\ cit.}\ needs to have $S_{\mathrm{Br}}$ replaced by $S$.  To make this adjustment, consider the map $\xi: \mathcal O \otimes_{K_0} \underline{D}(\mathcal M_{\mathcal O}) \rightarrow \mathcal M_{\mathcal O}
$ constructed in \cite[Lemma 1.2.6]{Kisin-FCrystals}. Thus $\xi$ is a $\varphi$-equivariant injection with cokernel annihilated by $\lambda^h$ for some $h \geq 0$. From the diagram in the middle of the proof of {\em loc.\ cit.}\ we have $\xi$ factors
\begin{equation}\label{eqn:comm-triang}
\xymatrix{
\mathcal O \otimes_{K_0} \underline D(\mathcal M_{\mathcal O}) \ar[d] \ar[r]^-{\xi} & \mathcal M_\cO\\
\mathcal O \otimes_{\varphi,\mathcal O} \mathcal M_\cO . \ar[ur]_-{1\otimes \varphi}
}
\end{equation}
We deduce the vertical arrow in \eqref{eqn:comm-triang} has cokernel annihilated by $\varphi(\lambda)^h$.  Since $\varphi(\lambda) \in S^\times$, we have
\begin{equation*}
S[1/p] \otimes_{K_0} \underline D(\mathcal M_{\mathcal O}) \overset{1\otimes \xi}{\cong} S[1/p] \otimes_{\varphi,\mathcal O} \mathcal M_{\mathcal O}.
\end{equation*}
This completes the proof.
\end{proof}

\begin{rmk}\label{rmk-fil-ev} The above proof makes it clear to see that for $D \in \MF_F^{\varphi,N}$ and $\mathcal D = \underline{\mathcal D}(D) \in {\MF}_{S_F}^{\varphi,N}$, the map $\ev_\pi$ induces an isomorphism $\Fil^{i+ 1}\cD / E \Fil ^i \cD \cong \Fil^{i +1} D_K$. Indeed, since $\ev_{\pi}(\Fil^{i +1}\D) = \Fil ^{i +1}D_K$, it suffices to show that $E \D \cap \Fil ^{i +1}\D = E \Fil ^i \D$. Pick $y = E  x\in \Fil ^{i +1} \D$ with $x\in \D$. The proof of the theorem, especially the fact that \eqref{eqn:triangle-diagram} commutes, shows that  
$$
\Fil^{i +1}(\mathcal D) = \{x \in \mathcal D \mid (1 \otimes \varphi_{\mathcal M})(x) \in \Fil^{i +1} S_F \otimes_{\mathcal O_F} \mathcal M_{\mathcal O_F} \}.$$ 
Thus, we see that $(1 \otimes \varphi_{\mathcal M})(Ex)= E (1 \otimes \varphi_{\mathcal M})(x)\in  \Fil^{i +1} S_F \otimes_{\mathcal O_F} \mathcal M_{\mathcal O_F}$. Since $\Fil^n S_F = E^n S_F$, it is clear that 
$(1 \otimes \varphi_{\mathcal M})(x)\in  \Fil^{i} S_F \otimes_{\mathcal O_F} \mathcal M_{\mathcal O_F}$ and hence $x \in \Fil ^ i \D $ as required. (Compare with the end of the proof of \cite[Proposition 3.2.1]{Liu-LatticesInSemiStable}.)
\end{rmk}

\begin{exam} \label{ex-crys} 
Suppose $K = \Q_p$ and $V$ is crystalline. By \cite{Laffaille}, $D= D_{\st}^{\ast}(V)$ admits a strongly divisible lattice  $(M , \Fil ^i M, \varphi _i)$. More precisely, there exists an $F$-basis $(e_1, \dots , e_d)$ of $D$ and integers $0 =n_0 \leq n_1 \leq \cdots \leq n_h \leq d$ such that $\Fil ^ i D : = \bigoplus_{j\geq n_i } F e_j $, and $\varphi (e_1,\dots , e_d) = (e_1, \dots , e_d) X P$ where $X \in \GL_d (\Lambda)$ and $P$ is a diagonal matrix whose $ii$-th entry is $p^{s_i}$ where $s_i= \max\{j \mid n_j \leq i\}= \max \{ j \mid e_i \in \Fil ^j D\}$. Since $N = 0$ on $D$,  we easily compute that $\Fil ^h  \D$ admits a basis 
$(e_1,\dots , e_d) B$ where $B$ is the diagonal matrix with $(i,i)$-th entry is $E^{h- s_i}$ (cf.\ Section \ref{subsec:filtration} below). By the steps outlined in Theorem \ref{thm:connect} , using the basis $1 \otimes e_i \in \D$ we see the matrix of $\varphi$ on $\cM$ is given by $A=E^h B^{-1} X P \varphi(B) p ^{-h} \fc ^{-h}$, where $A= \Lambda X C$, and $\Lambda$ is a diagonal matrix with $(i,i)$-th entry is $E^{s_i}$ and $C$ is a diagonal matrix with $(i,i)$-th entry is $\fc^{-s_i}$.  
\end{exam}
\section{An explicit determination of a Breuil module}\label{sec:explicit-family}

In this section, we assume $K = \mathbb Q_p$. We choose $\pi = -p$, so $E(u) = u+p$. We keep $F/\mathbb Q_p$ as a linear coefficient field and recall $\Lambda$ is its ring of integers. In Section  \ref{subsec:filtered-phiN}, we explain the definition of the filtered $(\varphi,N)$-module $D_{h+1,\mathcal L} \in \MF_{F}^{\varphi,N}$, for $h \geq 1$ and $\mathcal L \in F$, discussed in the introduction. Let $\mathcal M_{h+1,\mathcal L}=\underline{\mathcal M}_{\mathcal O_F}(D_{h+1,\mathcal L}) \otimes_{\mathcal O_F} S_F \in \Mod_{S_F}^{\varphi,\leq h}$. The ultimate goal (Theorem \ref{thm:initialSF-basis}) is to describe the matrix of $\varphi$ in a certain trivialization $\mathcal M_{h+1,\mathcal L} \cong S_F^{\oplus 2}$, at least if $\mathcal L \neq 0$. We begin by describing the Breuil module $\mathcal D_{h+1,\mathcal L} = \underline{\mathcal D}(D_{h+1,\mathcal L})$.

\subsection{The filtration on some rank 2 Breuil modules}\label{subsec:filtration}
In order to minimize notation, in this subsection, we let $D \in \MF_{F}^{\varphi,N}$ be any 2-dimensional filtered $(\varphi,N)$-module with Hodge--Tate weights $0 < h$. We also choose any basis $(f_1,f_2)$ for $D$ such that $\Fil^h D = Ff_2$. We  write $N_D(f_1,f_2) = (f_1,f_2)\begin{smallpmatrix}a & b \\ c & d\end{smallpmatrix}$ with  $\begin{smallpmatrix}a & b \\ c & d\end{smallpmatrix} \in \Mat_2(F)$. (Compare with Lemma \ref{lem:MF-basis}.) 

Set $\mathcal D = \underline{\mathcal D}(D) = S_F\otimes_F D$. For $f \in D$ we write $\widehat f = 1 \otimes f \in \mathcal D$.  In particular, $\mathcal D$ is a free $S_F$-module with basis $(\widehat f_1, \widehat f_2)$.  Recall that $\Fil^i\mathcal D$ is defined by $\Fil^0\mathcal D = \mathcal D$ and, for $i \geq 1$,
\begin{equation*}
\Fil^i\mathcal D = \{x \in \mathcal D \mid N_{\D}(x) \in \Fil^{i-1}\mathcal D \text{ and } \ev_{\pi}(x) \in \Fil^i D\}.
\end{equation*}
When $i = 1$, the condition $N_{\D}(x) \in \Fil^{0}\mathcal D = \mathcal D$ is a tautology. So, $\Fil ^1 \D = S_F \widehat{f}_2 + S_F E\widehat{f}_1$.

\begin{prop}\label{prop:filtration-equal} 
There exists $x_1,\dotsc,x_{h-1}\in F$ such that, if $0 \leq i \leq h$, then  
$$
\Fil^i\mathcal D = S_F\cdot\bigl(\widehat f_2 + (\sum_{j=1}^{i-1} x_jE^j)\widehat f_1\bigr) + S_F\cdot E^i \widehat f_1.
$$
\end{prop}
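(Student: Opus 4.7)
My plan is to induct on $i$ from $0$ to $h$, constructing the scalars $x_j \in F$ along the way. The base cases $i = 0$ and $i = 1$ are immediate: for $i=0$, $\Fil^0\mathcal D = \mathcal D = S_F\widehat f_2 + S_F \widehat f_1$ trivially, and for $i=1$ the formula reads $\Fil^1\mathcal D = S_F\widehat f_2 + S_F E\widehat f_1$, as already recorded in the text. For the inductive step $i \to i+1$ with $1 \leq i \leq h-1$, set $y_i = \widehat f_2 + P_{i-1}\widehat f_1$ with $P_{i-1} = \sum_{j=1}^{i-1} x_j E^j$, so by hypothesis $\Fil^i\mathcal D = S_F y_i + S_F E^i\widehat f_1$. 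I would propose the candidate $y_{i+1} := y_i + x_iE^i\widehat f_1$ and determine $x_i\in F$ from the membership condition.

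Since $\ev_\pi(y_{i+1}) = f_2 \in \Fil^{i+1}D = Ff_2$ when $i+1\leq h$, the condition $y_{i+1} \in \Fil^{i+1}\mathcal D$ reduces to $N_{\mathcal D}(y_{i+1}) \in \Fil^i\mathcal D$. Using $N_{\mathcal D} = N\otimes 1 + 1\otimes N_D$ and $N(E^j) = -juE^{j-1}$, one computes
\[
N_{\mathcal D}(y_{i+1}) = (cP_i + d)y_i + \bigl[N(P_i) + aP_i + b - (cP_i+d)P_{i-1}\bigr]\widehat f_1,
\]
so the membership condition amounts to the divisibility $Q_i := N(P_i) + aP_i + b - (cP_i+d)P_{i-1} \in E^iS_F$. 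Reducing modulo $E^i$ via $P_i \equiv P_{i-1}\pmod{E^i}$ and $N(P_i) = N(P_{i-1}) - ix_iuE^{i-1}$ transforms this into the congruence $R_{i-1} \equiv ix_iuE^{i-1}\pmod{E^i}$, where $R_{i-1} := N(P_{i-1}) + (a-d)P_{i-1} + b - cP_{i-1}^2$. The crucial lemma is then that $R_{i-1}$ is divisible by $E^{i-1}$ in $S_F$:\ expanding $N_{\mathcal D}(y_i) \in \Fil^{i-1}\mathcal D$ in the basis $\{y_{i-1}, E^{i-1}\widehat f_1\}$ gives $\beta' \in S_F$ with $\beta'E^{i-1} = N(P_{i-1}) + aP_{i-1} + b - (cP_{i-1}+d)P_{i-2}$, and a short manipulation yields the identity $R_{i-1} = (\beta' - x_{i-1}(cP_{i-1}+d))E^{i-1}$. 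Writing $r = R_{i-1}/E^{i-1} \in S_F$, the congruence $r \equiv ix_iu\pmod E$ uniquely determines $x_i = -\ev_\pi(r)/(ip) \in F$ (noting $ip \neq 0$ in the characteristic zero field $F$). A parallel direct computation confirms $E^{i+1}\widehat f_1 \in \Fil^{i+1}\mathcal D$, proving the ``$\supseteq$'' inclusion.

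For the reverse inclusion, any $x \in \Fil^{i+1}\mathcal D \subseteq \Fil^i\mathcal D$ can be written as $x = \alpha y_{i+1} + \delta E^i\widehat f_1$ for some $\alpha, \delta \in S_F$. Computing $N_{\mathcal D}(\delta E^i\widehat f_1)$ and demanding it lie in $\Fil^i\mathcal D$ yields, after absorbing the $c\delta E^i\widehat f_2$ term into $c\delta E^i y_i$, the single nontrivial constraint $iu\delta \in ES_F$, which forces $\ev_\pi(\delta) = 0$ and hence $\delta \in ES_F$ as required. The main obstacle is the divisibility lemma for $R_{i-1}$ in the forward step; once that identity is set up correctly by feeding on the previous inductive step, everything else reduces to clean computations modulo $E$ in the domain $S_F \subseteq F[[u]]$, using that $\ev_\pi : S_F \to F$ is surjective with kernel $ES_F$.
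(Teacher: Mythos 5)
Your proof is correct and uses the same basic inductive scheme as the paper (base cases $i=0,1$, then define $y_{i+1}=y_i+x_iE^i\widehat f_1$ and determine $x_i$ from the membership condition), but it handles the reverse inclusion $\Fil^{i+1}\mathcal D \subseteq S_F y_{i+1}+S_FE^{i+1}\widehat f_1$ by a genuinely different route. The paper defines $F^{i+1}\mathcal D = S_F y_{i+1}+S_FE^{i+1}\widehat f_1$, observes $E\Fil^i\mathcal D\subseteq F^{i+1}\mathcal D\subseteq\Fil^{i+1}\mathcal D$, and then invokes Remark~\ref{rmk-fil-ev} (which in turn relies on the full equivalence $\underline{\mathcal D}'$ and the commutativity of diagram \eqref{eqn:triangle-diagram}) to get $\Fil^{i+1}\mathcal D/E\Fil^i\mathcal D\cong Ff_2$, forcing $F^{i+1}\mathcal D=\Fil^{i+1}\mathcal D$. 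You instead decompose an arbitrary $x\in\Fil^{i+1}\mathcal D\subseteq\Fil^i\mathcal D$ as $\alpha y_{i+1}+\delta E^i\widehat f_1$ and extract the constraint $iu\delta\in ES_F$ directly from $N_{\mathcal D}(x)\in\Fil^i\mathcal D$, which forces $\delta\in ES_F$. Your argument is more elementary and self-contained --- it avoids the theory of Section~\ref{sec:comparison} entirely --- at the modest cost of the explicit computation showing $R_{i-1}$ is divisible by $E^{i-1}$, which is essentially the content the paper packages into the decomposition $N_{\mathcal D}(\widehat f_2^{(i)})=d_i\widehat f_2^{(i)}+b_iE^{i-1}\widehat f_1$ of Lemma~\ref{lem-ind-ab}. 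One small imprecision: when ``absorbing the $c\delta E^i\widehat f_2$ term,'' you should write $\widehat f_2 = y_i - P_{i-1}\widehat f_1$, giving two terms both visibly in $\Fil^i\mathcal D$, rather than replacing $\widehat f_2$ by $y_i$ outright; and you should note (or prove by a quick induction) that the filtration on $\underline{\mathcal D}(D)$ is decreasing so that $\Fil^{i+1}\mathcal D\subseteq\Fil^i\mathcal D$ is available.
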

\begin{proof} 
Assume by induction on $0 \leq i < h$, that there exists $x_1,\dotsc,x_{i-1} \in F$ such that for each $0 \leq j \leq i$ we have $\Fil^j\D = S_F\cdot \widehat f_2^{(j)} + S_F\cdot \widehat f_1$, where $\widehat f_2^{(j)} = \widehat f_2 + (\sum_{m=1}^{j-1} x_m E^m)\widehat f_1$. Setting $\widehat f_2^{(0)} = \widehat f_2^{(1)} = \widehat f_2$ handles the case $i = 0$ and $i = 1$. So, suppose $1 \leq i < h$. 

For the $(i+1)$-st case, we first define $x_i \in F$. By induction, $N_{\D}(\widehat f_2^{(i)}) \in \Fil^{i-1}\D = S_F\widehat f_2^{(i-1)} + S_F E^{i-1}\widehat f_1$. Since $\widehat f_2^{(i-1)} = \widehat f_2^{(i)} - x_{i-1}E^{i-1} \widehat f_1$, we can write
$$
N_{\D}(\widehat f_2^{(i)}) = d_i \widehat f_2^{(i)} + b_i E^{i-1}\widehat f_1
$$
for some $d_i, b_i \in S_F$ (cf.\ Lemma \ref{lem-ind-ab} below). Set $x_i = b_i(\pi)/i\pi$, and then set $\widehat f_2^{(i+1)} = \widehat f_2^{(i)} + x_i E^i \widehat f_1$. Since $2\leq i+1 \leq h$, we have $\Fil^{i+1}D = F f_2$. Thus, $\ev_{\pi}(\widehat f_2^{(i+1)}) = \widehat f_2 \in \Fil^{i+1}D$. Further,
\begin{align} \label{e1}
N_{\D}(\widehat{f}^{(i +1)} _2) &= N_{\D}(\widehat{f}^{(i)} _2)- x_i i u E^{i -1} \widehat{f}_1 + x_i E^i N_{\D}(\widehat{f}_1)\\
&= d_i\widehat f_2^{(i)} + (b_i-x_i i u)E^{i-1}\widehat f_1 + x_i E^i N_{\D}(\widehat f_1).\nonumber
\end{align}
Note, the last summand in \eqref{e1} lies in $\Fil^i S_F \cdot \D \subseteq \Fil^i \D$, while the first lies in $\Fil^i \D$. By definition we have $\ev_\pi(b_i - x_i i u) = 0$ and so the the middle summand also lies in $\Fil^i S_F \cdot \D \subseteq \Fil^i \D$. Thus $\widehat f_{2}^{(i+1)} \in \Fil^{i+1}\D$.

For a moment, define $F^{i+1}\D = S_F \widehat f_2^{(i+1)} + S_F E^{i+1} \widehat f_1 \subseteq \Fil^{i+1}\D$. We want to show equality. Since $E\widehat f_2^{(i)} = E\widehat f_2^{(i+1)} - x_i E^{i+1}\widehat f_1$, we in fact have
$$
E\Fil^i \D \subseteq F^{i+1}\D \subseteq \Fil^{i+1}\D.
$$
Since $\ev_\pi$ gives an isomorphism $\Fil^{i+1}\D/E\Fil^i\D \cong Ff_2$ by Remark \ref{rmk-fil-ev}, and $\ev_\pi(F^{i+1}\D) \neq 0$, we conclude the natural map $F^{i+1}\D/E\Fil^i\D \rightarrow \Fil^{i+1}\D/E\Fil^i\D$ is an isomorphism. Thus, $F^{i+1}\D = \Fil^{i+1}\D$.
\end{proof}

The proof of Proposition \ref{prop:filtration-equal} allows for explicit control of the scalars $x_j$ in terms of the monodromy matrix $\begin{smallpmatrix} a & b \\ c & d\end{smallpmatrix}$. For the next two results, we explain this by re-examining the proof.
\begin{lemma} \label{lem-ind-ab} For $1 \leq i \leq h-1$, let $d_i, b_i \in S_F$ be such that $
N_{\D}(\widehat f_2^{(i)}) = d_i \widehat f_2^{(i)} + b_iE^{i-1}\widehat f_1$. Then, $d_1 = d$, $b_1=b$, $x_1 = \frac{b}{\pi}$ and for $1 \leq i < h-1$
\begin{align*}
d_{i+1} &= d_i + c x_i E^i\\
b_{i+1} &= x_i(a-cz_i - d_i) + (b_i - x_i i u)/E \\
x_{i+1} &= \frac{b_{i+1}(\pi)}{(i+1)\pi}
\end{align*}
where $z_i = \sum_{j=1}^i x_j E^j$. 
\end{lemma}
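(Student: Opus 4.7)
The plan is to verify these recursions by direct computation, using the definition $\widehat f_2^{(i+1)} = \widehat f_2^{(i)} + x_i E^i \widehat f_1$ from the proof of Proposition \ref{prop:filtration-equal} and the formula $N_{\mathcal D} = N\otimes 1 + 1 \otimes N_D$.

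First I would handle the base case $i=1$: since $\widehat f_2^{(1)} = \widehat f_2 = 1 \otimes f_2$ and $N$ kills constants, we have $N_{\mathcal D}(\widehat f_2) = 1\otimes N_D(f_2) = b\widehat f_1 + d\widehat f_2$, giving $d_1 = d$ and $b_1 = b$. The formula $x_1 = b/\pi = b_1(\pi)/(1\cdot \pi)$ is then the definition of $x_1$ from the proof of Proposition \ref{prop:filtration-equal}.

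Next, for the inductive step, I would compute $N_{\mathcal D}(\widehat f_2^{(i+1)})$ by applying $N_{\mathcal D}$ to both sides of $\widehat f_2^{(i+1)} = \widehat f_2^{(i)} + x_i E^i \widehat f_1$. Using $N(E^i) = -iuE^{i-1}$ (since $N = -u\,d/du$ on $S_F$), the induction hypothesis for $N_{\mathcal D}(\widehat f_2^{(i)})$, and the identity $N_{\mathcal D}(\widehat f_1) = a\widehat f_1 + c\widehat f_2$, the main algebraic task is to rewrite the answer in the basis $\{\widehat f_2^{(i+1)}, E^i \widehat f_1\}$. Substituting $\widehat f_2^{(i)} = \widehat f_2^{(i+1)} - x_i E^i \widehat f_1$ and $\widehat f_2 = \widehat f_2^{(i+1)} - z_i \widehat f_1$, and collecting the coefficient of $\widehat f_2^{(i+1)}$ yields $d_i + c x_i E^i$, matching $d_{i+1}$. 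Collecting the coefficient of $\widehat f_1$ yields $E^{i-1}\bigl(b_i - x_i i u\bigr) + x_i E^i (a - cz_i - d_i)$, which after dividing by $E^i$ gives the claimed formula for $b_{i+1}$.

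The one point that requires a short sanity check is that $(b_i - x_i i u)/E$ actually lies in $S_F$. The plan is to evaluate at $u = \pi$: since $E = u - \pi$ here (with $\pi = -p$) and $x_i = b_i(\pi)/(i\pi)$, we get $\ev_\pi(b_i - x_i i u) = b_i(\pi) - x_i\cdot i\pi = 0$, so $E$ divides $b_i - x_i i u$ in $S_F$, and $b_{i+1}$ is genuinely an element of $S_F$. Finally, the formula $x_{i+1} = b_{i+1}(\pi)/((i+1)\pi)$ is the defining recipe of $x_{i+1}$ established in the proof of Proposition \ref{prop:filtration-equal}, so no additional argument is required. The main ``obstacle'' is really bookkeeping: keeping track of how $N$ acts on the powers of $E$ and correctly regrouping in the evolving basis $\{\widehat f_2^{(i+1)}, \widehat f_1\}$ rather than the original basis $\{\widehat f_2, \widehat f_1\}$.
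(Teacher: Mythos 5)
Your proof is correct and follows essentially the same route as the paper: apply $N_{\mathcal D}$ to $\widehat f_2^{(i+1)} = \widehat f_2^{(i)} + x_i E^i \widehat f_1$ using $N(E^i) = -iuE^{i-1}$, substitute $\widehat f_2^{(i)} = \widehat f_2^{(i+1)} - x_i E^i \widehat f_1$ and $\widehat f_2 = \widehat f_2^{(i+1)} - z_i\widehat f_1$, and collect coefficients of $\widehat f_2^{(i+1)}$ and $E^i\widehat f_1$. The divisibility sanity check you add (that $\ev_\pi(b_i - x_i i u)=0$ forces $E \mid (b_i - x_i i u)$ in $S_F$) is a nice touch, though it is already implicit in the proof of Proposition \ref{prop:filtration-equal}, which guarantees $b_{i+1}\in S_F$ independently.
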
 
\begin{proof}
The values of $d_1$, $b_1$, and $x_1$ follow immediately from $\widehat f_2^{(1)} = \widehat f_2$ and $N_{\D}(\widehat f_2) = b\widehat f_1 + d\widehat f_2$. Next, by \eqref{e1} and because $N_{\D}(\widehat f_1) = a\widehat f_1 + c\widehat f_2$, we have
\begin{equation}\label{eqn:recursive-formula-db}
N_{\D}(\widehat{f}^{(i +1)} _2) = d_i \widehat{f}^{(i)}_2 + (b_i - x_i i u) E^{i-1} \widehat{f}_1 + x_i E^i  (a \widehat{f}_1 + c \widehat{f}_2) .
\end{equation}
We can write $\widehat f_2^{(i)} = \widehat f_2^{(i+1)} - x_i E^i \widehat f_1$ and, separately, $\widehat f_2 = \widehat f_2^{(i+1)} - z_i \widehat f_1$. Thus \eqref{eqn:recursive-formula-db} becomes
\begin{equation*}
N_{\D}(\widehat f^{(i+1)}_2) = (d_i + cx_iE^i) \widehat f_2^{(i+1)} + \bigl(-d_ix_iE^i + (b_i-x_i i u)E^{i-1} + x_iE^i(a-cz_i)\bigr)\widehat f_1.
\end{equation*}
Factoring $E^i$ out of the $\widehat f_1$-coefficient, the result is clear.
\end{proof}

\begin{exam} \label{exam:h3}  
Below, in Lemma \ref{lemma:z-estimates}, we will need an explicit calculation of the $x_i$ and $z$. This can be done using the recursive formulas above. The calculations we need, both of which are straightforward, are:
\begin{align*}
x_2 &= \frac{b}{2\pi^2}(a-d-1)\\
z_2(0) &= \frac{b}{2}(a-d-3).
\end{align*} 
(See Example \ref{exam:rmk-p=3-h=3-L}, also.)

\end{exam}

\begin{lemma}\label{lem-coe-ab} 
Assume that $a-d \in \Lambda$ and $bc \in \Lambda$. Then, for $1 \leq i \leq h-1$, we have
\begin{equation*}
 v_p (x_i)+ v_p (i!)+ i \geq v_p (b).
\end{equation*}
\end{lemma}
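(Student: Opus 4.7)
The plan is to recast the claim as an integrality statement. Set $\tilde x_i := x_i\, i!\, \pi^i / b$. Since $v_p(\pi) = 1$, the inequality $v_p(x_i) + v_p(i!) + i \geq v_p(b)$ is equivalent to $\tilde x_i \in \Lambda$. I would prove this by induction on $i$. The base case $\tilde x_1 = 1$ is immediate from $x_1 = b/\pi$.

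The heart of the argument is to replace the recursion from Lemma \ref{lem-ind-ab}, which carries the auxiliary power series $b_i, d_i \in S_F$, with a closed recursion for $x_{i+1}$ purely in terms of $x_1,\ldots,x_i$. I would expand $b_i$ as a polynomial in $E = u - \pi$, say $b_i = \sum_{k \geq 0} c_{i,k} E^k$. The definition of $x_i$ forces $c_{i,0} = i\pi x_i$, and $b_1 = b$ makes $c_{1,k} = 0$ for $k \geq 1$. Expanding the recursion of Lemma \ref{lem-ind-ab} in powers of $E$ (using $u = E + \pi$) yields $c_{i+1,0} = x_i(a - d - i) + c_{i,1}$, the relation $c_{i+1,k} = -2c\, x_i x_k + c_{i,k+1}$ for $1 \leq k \leq i-1$, and $c_{i+1,i} = -c x_i^2$. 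A straightforward induction on $i$ then gives the closed form
$$c_{i, k} = -c \sum_{j=k}^{i-1} x_j\, x_{i-1+k-j} \qquad (1 \leq k \leq i - 1).$$
Specializing to $k = 1$ and combining with $(i+1)\pi x_{i+1} = c_{i+1,0}$ produces the clean recursion
$$(i+1)\pi\, x_{i+1} = (a - d - i)\, x_i \;-\; c \sum_{j=1}^{i-1} x_j\, x_{i-j}.$$

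Substituting $x_j = \tilde x_j\, b / (j!\, \pi^j)$ into this recursion and clearing a common factor of $b/(i!\pi^i)$ yields
$$\tilde x_{i+1} = (a - d - i)\, \tilde x_i \;-\; bc \sum_{j=1}^{i-1} \binom{i}{j}\, \tilde x_j\, \tilde x_{i-j}.$$
Here $a - d - i \in \Lambda$ since $a - d \in \Lambda$ and $i \in \mathbb Z$, $bc \in \Lambda$ by hypothesis, and $\binom{i}{j} \in \mathbb Z \subseteq \Lambda$. So the inductive hypothesis $\tilde x_1,\ldots,\tilde x_i \in \Lambda$ gives $\tilde x_{i+1} \in \Lambda$, completing the induction.

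The main obstacle is the combinatorial unfolding that arrives at the clean recursion; in particular, identifying $c_{i,1} = -c \sum_{j=1}^{i-1} x_j x_{i-j}$ requires some bookkeeping with the boundary values $c_{j,j-1} = -c x_{j-1}^2$ and $c_{j,k} = 0$ for $k \geq j$. Once this identity is in place, the remaining work is to guess the correct normalization $\tilde x_i$, which is essentially dictated by the desired inequality, and the happy fact is that with this scaling the factors of $\pi^i$ and $i!$ cancel exactly, leaving a recursion with manifestly $\Lambda$-integral coefficients.
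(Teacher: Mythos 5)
Your proof is correct, and it takes a genuinely different route from the paper's.

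The paper works with the auxiliary graded structure $A_v = \{\sum_j y_j E^j : v_p(y_j) + v_p(j!) + j \geq v\}$ and shows, by a soft induction carrying the power series $b_i$ along, that $b_iE^{i-1} \in A_{v_p(b)}$; the key tool is the observation $A_vA_w \subseteq A_{v+w}$ (coming from integrality of binomial coefficients), which lets the authors track the whole element $b_iE^{i-1}$ rather than its coefficients. You instead eliminate the auxiliaries $b_i, d_i$ entirely: expanding the Lemma \ref{lem-ind-ab} recursion in powers of $E$ and establishing the closed form $c_{i,k} = -c\sum_{j=k}^{i-1} x_j x_{i-1+k-j}$, you arrive at the self-contained recursion $(i+1)\pi x_{i+1} = (a-d-i)x_i - c\sum_{j=1}^{i-1} x_j x_{i-j}$, after which the normalization $\tilde x_i = x_i i! \pi^i/b$ yields $\tilde x_{i+1} = (a-d-i)\tilde x_i - bc\sum\binom{i}{j}\tilde x_j\tilde x_{i-j}$ with manifestly $\Lambda$-integral coefficients. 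The trade-offs: your route requires the extra combinatorial identity for $c_{i,k}$ and the bookkeeping to verify it, whereas the paper's route is shorter and requires nothing beyond the filtration properties of $A_v$; on the other hand, your closed recursion is a genuinely useful by-product (it determines the $x_i$ without reference to the power series $b_i$ at all, and the same binomial integrality that the paper buries inside $A_vA_w \subseteq A_{v+w}$ appears here transparently as the coefficient $\binom{i}{j}$). One small point: the normalization $\tilde x_i = x_i i!\pi^i/b$ presupposes $b \neq 0$, but as the paper notes in Remark \ref{rmk:b=0} the case $b=0$ forces $x_i = 0$ for all $i$, so the lemma is vacuous there; it would be worth saying so explicitly.
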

\begin{rmk}\label{rmk:b=0}
The lemma is consistent with $b = 0$ since $x_i = 0$, for all $i$, in that case.
\end{rmk}
\begin{proof}[Proof of Lemma \ref{lem-coe-ab}]
Given $v \in \mathbb R$ we write 
\begin{equation*}
A_v = \{ \sum_{j\geq 0} y_j E^j \in F[u] \mid v_p(y_j) + v_p(j!) + j \geq v\}.
\end{equation*}
Note that $A_v$ is a subgroup of $F[u]$. Since $v_p((j+k)!) \geq v_p(j!) + v_p(k!)$ for all non-negative integers $j,k$ (because binomial coefficients are integers), we have $A_v A_w \subseteq A_{v+w}$, as well. In particular, $A_0$ is a ring containing $\Lambda$ as a subring and each $A_v$ is an $A_0$-module. 

The lemma is equivalent to $x_i E^i \in A_{v_p(b)}$ for all $1 \leq i \leq h-1$, but to show $x_iE^i \in A_{v_p(b)}$ it suffices to show $b_iE^{i-1} \in A_{v_p(b)}$. Indeed, $b_i E^{i-1} \in b_i(\pi)E^{i-1} + E^i F[u]$,
and so if $b_iE^{i-1} \in A_v$ (for any $v$) then $v_p(b_i(\pi)) + v_p((i-1)!) + i-1 \geq v$. Since $b_i(\pi) = x_i i \pi$, by definition, we would clearly have $v_p(x_i) + v_p(i!) + i \geq v$ as well.

We have reduced to showing $b_i E^{i-1} \in A_{v_p(b)}$ for $1 \leq i \leq h-1$. For $i = 1$, by Lemma \ref{lem-ind-ab}, we have $b_1 = b$ and so the claim is clear. Now assume that $b_jE^{j-1} \in A_{v_p(b)}$ for all $j \leq i$.  By the previous paragraph we have $x_j E^{j} \in A_{v_p(b)}$ for all $j \leq i$, and so $z_j \in A_{v_p(b)}$ for all $j \leq i$ (including $z_0$, which we define to be 0). By Lemma \ref{lem-ind-ab}, we have
\begin{multline}\label{eqn:bi+1}
b_{i+1}E^i = (a-cz_i - d_i) x_iE^i + (b_i - x_i i u)E^{i-1}\\
 = (a-d-c(z_i+z_{i-1}))x_iE^i + b_i E^{i-1} - x_i i \pi E^{i-1} - x_i i E^{i}.
\end{multline}
It is clear by induction that the final three summands are in $A_{v_p(b)}$. For the first summand, we  know  $z_i + z_{i-1} \in A_{v_p(b)}$. Since $v_p(c)+v_p(b) \geq 0$ and $a-d \in \Lambda$, we see $a-d-c(z_i+z_{i-1}) \in A_0$. Since $x_iE^i \in A_{v_p(b)}$, by induction, the first summand also lies in $A_{v_p(b)}$. Thus, $b_{i+1}E^i \in A_{v_p(b)}$.
\end{proof}

\subsection{Explicit filtered $(\varphi,N)$-modules}\label{subsec:filtered-phiN}

Now assume $F$ contains an element $\varpi$ such that $\varpi^2 = p$. For $\mathcal L \in F$ and $h\geq 1$, we define $D_{h+1,\mathcal L} = Fe_1 \oplus Fe_2 \in \MF_{F}^{\varphi,N}$ where, in the basis $(e_1,e_2)$,
\begin{align*}
\varphi &= \begin{pmatrix} \varpi^{h+1} & 0 \\ 0 & \varpi^{h-1} \end{pmatrix} 
&
N &= \begin{pmatrix} 0 & 0 \\ 1 & 0 \end{pmatrix}
&
\Fil^i D_{h+1,\mathcal L} &= \begin{cases} D_{h+1,\mathcal L} & \text{if $i \leq 0$;}\\
F\cdot(e_1 + \mathcal Le_2) & \text{if $1 \leq i \leq h$;}\\
\{0\} & \text{if $h < i$}.
\end{cases}
\end{align*}
See \cite[Exemple 3.1.2.2(iv)]{BreuilMezard-Multiplicities}. It is useful make a change of a basis. Set $a_p = \varpi^{h-1}+\varpi^{h+1}$.
\begin{lemma}\label{lem:MF-basis}
If $\mathcal L \neq 0$, then $(f_1,f_2)  = (-\varphi(e_1+\mathcal Le_2),e_1+\mathcal Le_2)$ is a basis of $D_{h+1,\mathcal L}$ in which
\begin{align*}
\varphi &= \begin{pmatrix} a_p & -1 \\ p^{h} & 0 \end{pmatrix}
&
N &= \frac{p}{\mathcal L(1-p)}\begin{pmatrix} 1 & \varpi^{-h-1} \\ \varpi^{h+1} & -1 \end{pmatrix}
&
\Fil^i D_{h+1,\mathcal L} &= \begin{cases} D_{h+1,\mathcal L} & \text{if $i \leq 0$;}\\
Ff_2 & \text{if $1 \leq i \leq h$;}\\
\{0\} & \text{if $h < i$}.
\end{cases}
\end{align*}
\end{lemma}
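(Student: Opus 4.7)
The plan is a direct change-of-basis computation, using $\mathcal L \neq 0$ to guarantee invertibility. Writing $(f_1,f_2) = (e_1,e_2)P$ with
$$
P = \begin{pmatrix} -\varpi^{h+1} & 1 \\ -\mathcal L\varpi^{h-1} & \mathcal L \end{pmatrix},
$$
one computes $\det P = \mathcal L\varpi^{h-1}(1-p) \neq 0$, so $\{f_1,f_2\}$ is indeed an $F$-basis of $D_{h+1,\mathcal L}$. The filtration claim is then immediate: by construction $f_2 = e_1 + \mathcal L e_2$ generates $\Fil^i D_{h+1,\mathcal L}$ for $1 \leq i \leq h$, while $\Fil^0$ and $\Fil^{h+1}$ are unchanged under any choice of basis.

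For the $\varphi$-matrix, the definition $f_1 = -\varphi(f_2)$ already gives $\varphi(f_2) = -f_1$, accounting for the second column. For the first column, I compute
$$
\varphi(f_1) = -\varphi^2(e_1 + \mathcal L e_2) = -\varpi^{2h+2}e_1 - \mathcal L\varpi^{2h-2}e_2 = -p^{h+1}e_1 - p^{h-1}\mathcal L e_2,
$$
and then verify that $a_p f_1 + p^h f_2 = -p^{h+1}e_1 - p^{h-1}\mathcal L e_2$ directly, using $a_p = \varpi^{h-1}+\varpi^{h+1}$ together with $\varpi^{h-1}\cdot\varpi^{h+1} = p^h$.

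The monodromy is the most delicate piece, but everything collapses once one notices $N$ has one-dimensional image. From $N(e_1) = e_2$ and $N(e_2) = 0$ one has $N(f_2) = e_2$, and the defining relation $N\varphi = p\varphi N$ applied to $f_2$ gives $N(f_1) = -N\varphi(f_2) = -p\varphi(e_2) = -\varpi^{h+1}e_2$. Thus both values of $N$ lie in $F\cdot e_2$, and it remains only to re-express $e_2$ in the $(f_1,f_2)$-basis. Adding $\varpi^{h+1}f_2$ to $f_1$ eliminates $e_1$ and yields $f_1+\varpi^{h+1}f_2 = \mathcal L\varpi^{h-1}(p-1)e_2$; rearranging and pulling out the scalar $\tfrac{p}{\mathcal L(1-p)}$ (via $p=\varpi^2$) puts $N(f_1)$ and $N(f_2)$ in the form predicted by the stated matrix. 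The main obstacle is really just the bookkeeping of signs and $\varpi$-powers; as a sanity check, the resulting $M_N$ must automatically satisfy $M_N^2=0$ (forced by $N^2=0$) and $M_N M_\varphi = p\,M_\varphi M_N$, either of which catches any algebra slip immediately.
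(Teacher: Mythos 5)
Your proposal is the direct change-of-basis computation that the paper's proof explicitly leaves to the reader (``We leave calculating the matrices for the reader''), so methodologically you are doing exactly what the authors intend, and in more detail; the basis claim, the filtration, and the $\varphi$-matrix are all verified correctly. But the final step of the monodromy computation should not be waved away as ``puts $N(f_1)$ and $N(f_2)$ in the form predicted by the stated matrix'': carrying it through, one has
\[
N(f_2) = e_2 = \frac{1}{\mathcal L\varpi^{h-1}(p-1)}\bigl(f_1 + \varpi^{h+1}f_2\bigr),
\]
and factoring out $\frac{p}{\mathcal L(1-p)}$ (using $p=\varpi^2$ and $\frac{1}{p-1}=\frac{-1}{1-p}$) leaves $-\varpi^{-h-1}$, not $+\varpi^{-h-1}$, as the coefficient of $f_1$, i.e.\ as the $(1,2)$-entry of $N$. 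Your own sanity check catches this: with the sign as printed one has $\begin{smallpmatrix} 1 & \varpi^{-h-1} \\ \varpi^{h+1} & -1 \end{smallpmatrix}^2 = 2I \neq 0$, whereas with the corrected $-\varpi^{-h-1}$ the square vanishes and $N\varphi = p\varphi N$ also holds. The corrected entry further agrees with the value $b = \frac{-p}{\varpi^{h+1}\mathcal L(1-p)}$ that the paper itself uses at the end of the proof of Theorem \ref{thm:initialSF-basis}. So your method, carried to completion, actually reveals a sign typo in the stated lemma; finish the last line of algebra rather than asserting the match.
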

\begin{proof}
If $\mathcal L \neq 0$, then $e_1 + \mathcal Le_2$ is {\em not} an eigenvector of $\varphi$, so $(f_1,f_2)$ is a basis. We leave calculating the matrices for the reader.
\end{proof}
Now let $\mathcal D_{h+1,\mathcal L} = \underline{\mathcal D}(D_{h+1,\mathcal L})$ and $\mathcal M_{h+1,\mathcal L} = \underline{\mathcal M}_{\mathcal O_F}(D_{h+1,\mathcal L}) \otimes_{\mathcal O_F} S_F \in \Mod_{S_F}^{\varphi,\leq h}$.  Recall that $\fc = \varphi(E)/p \in S_F^\times$. Let $\lambda_- = \prod_{n\geq 0} \varphi^{2n+1}(E)/p$ and $\lambda_{++} = \varphi(\lambda_-)$.

\begin{thm}\label{thm:initialSF-basis}
If $\mathcal L \neq 0$, there exists a basis of $\mathcal M_{h+1,\mathcal L}$ in which the matrix of $\varphi$ is given by 
\begin{equation*}
A = \begin{pmatrix} (a_p - p^h z)\left({\lambda_-\over \lambda_{++}}\right)^h & -1 + \varphi(z)(a_p - p^h z) \\ E^h & E^h \varphi(z) \left({\lambda_{++}\over \lambda_-}\right)^h \end{pmatrix},
\end{equation*}
where $z = \sum_{j=1}^{h-1} x_j E^j \in F[E]$. Moreover, if $v_p(\mathcal L^{-1}) \geq -1$, then
\begin{equation}\label{eqn:xj-estimate}
v_p(x_j) \geq v_p(\mathcal L^{-1}) - {h-1\over 2} - v_p(j!) - j 
\end{equation}
for each $1 \leq j \leq h-1$.
\end{thm}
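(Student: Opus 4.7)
The plan is to apply Theorem \ref{thm:connect} directly to $D_{h+1,\mathcal L}$ using the basis $\{f_1,f_2\}$ provided by Lemma \ref{lem:MF-basis} (which is available because $\mathcal L \neq 0$). In this basis, $\Fil^h D_{h+1,\mathcal L} = Ff_2$, so Proposition \ref{prop:filtration-equal} applies and produces scalars $x_1,\dotsc,x_{h-1} \in F$ such that $\Fil^h \mathcal D_{h+1,\mathcal L}$ is generated over $S_F$ by $\widehat f_2 + z\widehat f_1$ and $E^h\widehat f_1$, where $z = \sum_{j=1}^{h-1} x_j E^j$. In terms of the basis $(\widehat f_1,\widehat f_2)$ of $\mathcal D_{h+1,\mathcal L}$, this gives matrices
\begin{equation*}
B = \begin{pmatrix} z & E^h \\ 1 & 0 \end{pmatrix}, \qquad X = \begin{pmatrix} a_p & -1 \\ p^h & 0 \end{pmatrix}.
\end{equation*}

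Next, I would follow step (3) of Theorem \ref{thm:connect} to compute $A_0 := E^h B^{-1} X \varphi(B) p^{-h}\fc^{-h}$ by a direct matrix multiplication, using $\varphi(E)^h = p^h\fc^h$ to cancel terms. The resulting matrix has the entries of the theorem up to the factors $(\lambda_-/\lambda_{++})^{\pm h}$. To recover the exact form in the statement, I would make a diagonal change of basis $\mathfrak f_i \mapsto \mu_i \mathfrak f_i$ on $\mathcal M_{h+1,\mathcal L}$, which transforms $A_0$ entrywise by $(A_0)_{ij} \mapsto (A_0)_{ij}\varphi(\mu_j)/\mu_i$. Choosing $\mu_1, \mu_2$ to be suitable powers of $\lambda_-$, and using $\varphi(\lambda_-) = \lambda_{++}$, the factor $\fc^{-h}$ can be converted into $(\lambda_-/\lambda_{++})^h$ and the (1,2) and (2,1) entries can be made $-1 + \varphi(z)(a_p - p^h z)$ and $E^h$ respectively.

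For the estimate on $v_p(x_j)$, I would apply Lemma \ref{lem-coe-ab} to the monodromy matrix from Lemma \ref{lem:MF-basis}, namely $a = \tfrac{p}{\mathcal L(1-p)}$, $d = -\tfrac{p}{\mathcal L(1-p)}$, $b = \tfrac{p}{\mathcal L(1-p)}\varpi^{-h-1}$, $c = \tfrac{p}{\mathcal L(1-p)}\varpi^{h+1}$. Then $v_p(a-d) = 1 + v_p(\mathcal L^{-1})$ and $v_p(bc) = 2(1+v_p(\mathcal L^{-1}))$, both of which are non-negative exactly under the hypothesis $v_p(\mathcal L^{-1}) \geq -1$. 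So $a-d \in \Lambda$ and $bc \in \Lambda$, and Lemma \ref{lem-coe-ab} yields
\begin{equation*}
v_p(x_j) \geq v_p(b) - v_p(j!) - j = 1 + v_p(\mathcal L^{-1}) - \tfrac{h+1}{2} - v_p(j!) - j = v_p(\mathcal L^{-1}) - \tfrac{h-1}{2} - v_p(j!) - j,
\end{equation*}
which is the stated bound.

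The main obstacle I anticipate is not the matrix multiplication or the valuation estimate, which are bookkeeping once the preceding results are in place. Rather, it is identifying the correct diagonal rescaling of $(\mathfrak f_1, \mathfrak f_2)$ by powers of $\lambda_-$ so that the raw output of Theorem \ref{thm:connect} matches the symmetric form of $A$ presented in the statement. This is a purely algebraic manipulation using $\varphi(\lambda_-) = \lambda_{++}$, but getting the powers right (while keeping the (2,1) entry equal to $E^h$) requires some care.
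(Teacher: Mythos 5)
Your proposal follows the paper's own proof closely: same invocation of Lemma \ref{lem:MF-basis}, Proposition \ref{prop:filtration-equal}, and step (3) of Theorem \ref{thm:connect}, followed by a $\ast_\varphi$-conjugation to absorb $\fc^{-h}$, and finally Lemma \ref{lem-coe-ab} applied to the monodromy matrix; the valuation computation is correct and matches the paper's. One small wrinkle: you took $B = \begin{smallpmatrix} z & E^h \\ 1 & 0\end{smallpmatrix}$, whereas the paper takes $B = \begin{smallpmatrix} E^h & z \\ 0 & 1\end{smallpmatrix}$ (same submodule, columns swapped). With your choice, the raw output $A_0 = E^h B^{-1}X\varphi(B)p^{-h}\fc^{-h}$ is the paper's $A'$ conjugated by the permutation matrix $\begin{smallpmatrix} 0 & 1 \\ 1 & 0\end{smallpmatrix}$, so a purely diagonal change of basis cannot bring it to the stated form $A$ --- you also need to swap the two basis vectors of $\mathcal M_{h+1,\mathcal L}$. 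The simplest fix is to use the paper's ordering of the generators of $\Fil^h\mathcal D$, after which the conjugating matrix is exactly $C = \mathrm{diag}(p^h\lambda_-^h, \lambda_{++}^h)$ (note this is not literally a power of $\lambda_-$ in the second slot, it is a power of $\lambda_{++} = \varphi(\lambda_-)$, and there is an extra $p^h$; you implicitly acknowledge this, but it is worth being precise since the identity $\fc^{-h} = (\varphi(\lambda_{++})/\lambda_-)^h$ is what makes the cancellation work).
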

\begin{proof}
Let $(f_1,f_2)$ be the basis as in Lemma \ref{lem:MF-basis}. Set $\widehat f_1 = 1 \otimes f_1$ and $\widehat f_2 = 1 \otimes f_2$, elements of $\mathcal D_{h+1,\mathcal L}$, as before. Then, the matrix of $\varphi$ in the basis $(\widehat f_1,\widehat f_2)$ of $\mathcal D_{h+1,\mathcal L}$ is $X = \begin{smallpmatrix} a_p & -1 \\ p^h & 0\end{smallpmatrix}$. Moreover, Proposition \ref{prop:filtration-equal} implies that $\Fil^h \D_{h+1,\mathcal L} = S_F\alpha_1\oplus S_F\alpha_2$, where
$$
(\alpha_1,\alpha_2) = (\widehat f_1,\widehat f_2)\begin{pmatrix} E^h & z \\ 0 & 1 \end{pmatrix} =: (\widehat f_1,\widehat f_2)B
$$
for $z = \sum_{j=1}^{h-1} x_j E^j$ and some $x_j \in F$. Theorem \ref{thm:connect}  implies that $\mathcal M_{h+1,\mathcal L}$ has a basis in which the matrix of $\varphi$ is given by
\begin{equation}\label{eqn:initial-A}
A' = E^h B^{-1} X \varphi(B) p^{-h}\fc^{-h} = \begin{pmatrix} a - p^h z & p^{-h}\fc^{-h}\left(-1 + \varphi(z)(a_p - p^h z)\right) \\ E^h p^h & p^{-h}\fc^{-h}E^h p^h \varphi(z)\end{pmatrix}.
\end{equation}
Since $\lambda_-$ and $\lambda_{++}$ are units in $S_F$, we can replace $A'$ by $CA'\varphi(C^{-1})$ for $C= \begin{smallpmatrix} p^h \lambda_-^h & 0 \\ 0 & \lambda_{++}^h \end{smallpmatrix}$. A short calculation shows $A = C A' \varphi(C^{-1})$, completing the general proof.

Finally, if $v_p(\mathcal L^{-1}) \geq -1$, then the matrix of $N$ in Lemma \ref{lem:MF-basis} satisfies the hypotheses of Lemma \ref{lem-coe-ab}. So, the estimates \eqref{eqn:xj-estimate} follow from the $b$-entry of the monodromy matrix being
$$
b = {-p\over \varpi^{h+1}\mathcal L(1-p)} = {-1\over \varpi^{h-1}\mathcal L(1-p)}.
$$
This completes the proof.
\end{proof}

\begin{rmk}
An analogous calculation in the crystalline case, where $z = 0$ (see Remark \ref{rmk:b=0}), was made in \cite[Section 3]{BergdallLevin-BLZ}. The technique here, passing through the category $\MF_{S_F}^{\varphi,N}$, is different than {\em loc.\ cit.} The descriptions are the same, though. Compare with Example \ref{ex-crys}.
\end{rmk}

\begin{exam}\label{exam:rmk-p=3-h=3-L}
We need one ad hoc calculation in Lemma \ref{lemma:z-estimates} below. Let $h = 3$. By Example \ref{exam:h3}, the element $z$ in Theorem \ref{thm:initialSF-basis} satisfies $z(0) = \frac{b}{2}(a-d-3)$ where $\begin{smallpmatrix} a & b \\ c & d \end{smallpmatrix}$ is the monodromy matrix in Lemma \ref{lem:MF-basis}. For $p = h= 3$, plugging in the explicit matrix, we see $z(0) = {1\over 4\mathcal L}\left(\frac{1}{\mathcal L} + 1\right)$.
\end{exam}
\section{Descent and reductions}\label{sec:descent}
The goal in this section is to prove the main theorem of this article. Given $h \geq 1$ and $\mathcal L \in F$ we write $V_{h+1,\mathcal L}$ for the unique two-dimensional representation of $G_{\mathbb Q_p}$ such that $D_{\st}^{\ast}(V_{h+1,\mathcal L}) \cong D_{h+1,\mathcal L}$ where $D_{h+1,\mathcal L}$ is as in Section \ref{subsec:filtered-phiN}. Write $\overline V$ for the semi-simple reduction modulo $\mathfrak m_F$ of $V$. Let $\mathbb Q_{p^2}$ be the unramified quadratic extension of $\Qp$, $\chi$ the unramified quadratic character of $G_{\mathbb Q_{p^2}}$, and $\omega_2$ a niveau 2 fundamental character of $\mathbb Q_{p^2}$. Note that $\operatorname{Ind}_{G_{\mathbb Q_{p^2}}}^{G_{\mathbb Q_p}}(\omega_2^{h}\chi)$ has determinant $\omega^h$, where $\omega$ is the cyclotomic character, and its restriction to inertia is $\omega_2^h \oplus \omega_2^{ph}$.

\begin{thm}\label{thm:main-thm-in-text}
Assume $h \geq 3$ and $p \neq 2$. Then, if $\mathcal L$ satisfies
$$
v_p(\mathcal L^{-1}) > {h-1\over 2} - 1 + v_p((h-1)!),
$$
then $\overline V_{h+1,\mathcal L} \cong \operatorname{Ind}_{G_{\mathbb Q_{p^2}}}^{G_{\mathbb Q_p}}(\omega_2^{h}\chi)$.
\end{thm}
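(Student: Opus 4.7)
The strategy follows the two-step framework laid out in Section \ref{subsec:overview}. First, produce a Kisin module over $\mathfrak S_\Lambda$ whose base change to $S_F$ is $\mathcal M_{h+1,\mathcal L}$ of Theorem \ref{thm:initialSF-basis}; then compare its reduction to the known one for the crystalline representation $V_{h+1,\infty}$.

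\textbf{Step 1 (base change to an annulus).} Since $p \neq 2$, the inclusion $S_F \hookrightarrow R_2$ is available, where $R_2$ is the ring of rigid analytic functions on the disc $|u|\leq p^{-1/2}$ with coefficients in $F$. Base change $\mathcal M_{h+1,\mathcal L}$ from $S_F$ to $R_2$; by Theorem \ref{thm:initialSF-basis}, the resulting $\varphi$-module over $R_2$ is presented by the explicit matrix
\[
A = \begin{pmatrix} (a_p - p^h z)\left(\tfrac{\lambda_-}{\lambda_{++}}\right)^h & -1 + \varphi(z)(a_p - p^h z) \\ E^h & E^h \varphi(z)\left(\tfrac{\lambda_{++}}{\lambda_-}\right)^h \end{pmatrix},
\]
and comes with the precision estimate $v_p(x_j) \geq v_p(\mathcal L^{-1}) - \tfrac{h-1}{2} - v_p(j!) - j$ on the coefficients of $z = \sum_{j=1}^{h-1} x_j E^j$.

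\textbf{Step 2 (descent to $\mathfrak S_\Lambda$).} Apply the row reduction algorithm from \cite[Section 4]{BergdallLevin-BLZ} to the pair $(A, R_2)$. In the crystalline limit $\mathcal L^{-1} = 0$ we have $z = 0$ and $A$ degenerates to the matrix used in \cite[Theorem 5.2.1]{BergdallLevin-BLZ} to construct a Kisin module for $V_{h+1,\infty}$. The perturbation $A - A_\infty$ is entirely governed by $z$ and $\varphi(z)$, and the hypothesis $v_p(\mathcal L^{-1}) > \tfrac{h-1}{2} - 1 + v_p((h-1)!)$ is precisely calibrated so that, via the estimate above, the $x_j E^j$ are small enough (in the Gauss norm on $R_2$) for the successive approximations in \emph{loc.\ cit.}\ to converge and to output a matrix $C \in \GL_2(R_2)$ with $CA\varphi(C)^{-1} \in \Mat_2(\mathfrak S_\Lambda)$. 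This defines a Kisin module $\mathfrak M \in \Mod_{\mathfrak S_\Lambda}^{\varphi,\leq h}$. The main obstacle will be \emph{this} convergence/precision bookkeeping: one has to keep uniform control of $p$-adic valuations not just on the initial entries but through every iteration of the descent, which is where the factorials in the bound enter. The ad hoc case $(p,h)=(3,3)$ is handled by the additional information recorded in Example \ref{exam:rmk-p=3-h=3-L}.

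\textbf{Step 3 (identification).} By construction $\mathfrak M \otimes_{\mathfrak S_\Lambda} \mathcal O_s \cong \underline{\mathcal M}_{\mathcal O_F}(D_{h+1,\mathcal L}) \otimes_{\mathcal O_F}\mathcal O_s$ for $s = 1/2$, which satisfies $1/pe < s < 1/e$ with $e = 1$. Proposition \ref{prop:reductions-first} then yields $\mathfrak M = \mathfrak M(T)$ for some $G_\infty$-stable lattice $T \subseteq V_{h+1,\mathcal L}$, and hence $\overline V_{h+1,\mathcal L}$ is determined by $\mathfrak M/p\mathfrak M[u^{-1}]$ as an étale $\varphi$-module over $\mathbb F(\!(u)\!)$.

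\textbf{Step 4 (reduction matches the crystalline case).} Under our hypotheses $\lfloor h/p\rfloor < (h-1)/2 = v_p(a_p)$, so \cite[Corollary 5.2.3]{BergdallLevin-BLZ} gives $\overline V_{h+1,\infty} \cong \operatorname{Ind}_{G_{\mathbb Q_{p^2}}}^{G_{\mathbb Q_p}}(\omega_2^h\chi)$. The descent in Step 2 was engineered as a perturbation of the one used for $V_{h+1,\infty}$ in \cite[Theorem 5.2.1]{BergdallLevin-BLZ}; carrying the precision through mod $p$, the reductions $\mathfrak M/p\mathfrak M[u^{-1}]$ and $\mathfrak M(V_{h+1,\infty})/p\mathfrak M(V_{h+1,\infty})[u^{-1}]$ agree as étale $\varphi$-modules. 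Therefore $\overline V_{h+1,\mathcal L} \cong \overline V_{h+1,\infty} \cong \operatorname{Ind}_{G_{\mathbb Q_{p^2}}}^{G_{\mathbb Q_p}}(\omega_2^h\chi)$, as desired.
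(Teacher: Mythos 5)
Your plan correctly mirrors the paper's two-step strategy — descend the explicit Breuil module over $R_2$ to a Kisin module over $\mathfrak S_\Lambda$, then compare its mod-$p$ reduction with that of the crystalline point $V_{h+1,\infty}$ — and Steps 1 and 3 are essentially right. But there are two genuine gaps.

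First, the descent-based argument does not cover the full range of $(p,h)$ in the hypothesis. The intermediate change-of-basis step that puts the $\varphi$-matrix into the form $\begin{smallpmatrix} G & -1 \\ E^h & 0\end{smallpmatrix}$ with $G \in \mu + H_h^{\circ}$ (Proposition \ref{prop:descent-almostready}) produces a priori only $G \in \mu + H_{2h-4}^{\circ}$; to get into $H_h^\circ$ one needs $2h-4 \geq h$, i.e.\ $h \geq 4$, with the case $p = h = 3$ rescued only by the ad hoc sharper estimate on $\nu$ from Example \ref{exam:rmk-p=3-h=3-L}. For $p \geq 5$ and $h \in \{2,3\}$ (allowed by the theorem, since $p \neq 2$ and $h \geq 2$), the descent simply does not apply. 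Here $h < p-1$, and the paper handles these cases by invoking Breuil--M\'ezard for $h$ odd and Guerberoff--Park for $h$ even; your proposal omits this case split entirely.

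Second, your Step 4 contains a false claim: $\lfloor h/p\rfloor < (h-1)/2$ fails when $p = h = 3$ (both sides equal $1$), so the strict-inequality hypothesis you need to invoke \cite[Corollary 5.2.3]{BergdallLevin-BLZ} is not satisfied in that case. The paper instead cites \cite[Corollary 5.2.2]{BergdallLevin-BLZ}, which directly supplies a Kisin module $\mathfrak M'$ for $V_{h+1,\infty}$ whose mod-$\mathfrak m_F$ reduction has Frobenius $\begin{smallpmatrix} 0 & -1 \\ u^h & 0\end{smallpmatrix}$ and which determines $\overline V_{h+1,\infty}$. The comparison with $\overline V_{h+1,\mathcal L}$ is then immediate because the descended $\varphi$-matrix has $P \in \mathfrak m_F[u]$, so both Kisin modules have the same mod-$p$ \'etale $\varphi$-module; your ``carrying the precision through mod $p$'' is correct in spirit but should be replaced by this clean statement, which also covers $p = h = 3$.
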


\begin{rmk}\label{rmk:small-p}
Our contribution toward Theorem \ref{thm:main-thm-in-text} is limited to $h \geq 4$ and $p = h = 3$. The case of $h = 3$ and $p \geq 5$ follows from the work of Breuil and M\'ezard. If we were to use the weaker bound $v_p(\mathcal L^{-1}) > {h-1\over 2} + v_p((h-1)!)$, then our calculation would also cover the cases of $h=2$ and $h=3$. See Remark \ref{remark:final-remark} for further explanations.
\end{rmk}
We plan to take the matrix of $\varphi$ acting on $\mathcal M_{h+1,\mathcal L}=\underline{\mathcal M}_{\mathcal O_F}(D_{h+1,\mathcal L}) \otimes_{\mathcal O_F} S_F$ as in Theorem \ref{thm:initialSF-basis} and replace it with a $\varphi$-conjugate defined over $\mathfrak S_{\Lambda}$ when $v_p(\mathcal L^{-1})$ satisfies the bound in theorem. This defines a Kisin module $\mathfrak M$ for $V_{h+1,\mathcal L}$ that allows us to calculate the reduction $\overline{V}_{h+1,\mathcal L}$. Despite our theorem being limited to $h \geq 3$, we will present many calculations only assuming $h \geq 2$, in order to later justify Remark \ref{rmk:small-p}. So, we assume without further comment that:
\begin{align}\label{eqn:blanket-assumptions}
p &\neq 2 \text{ and } h \geq 2; \\
v_p(\mathcal L^{-1}) &> {h-1\over 2} - 1 + v_p((h-1)!).\nonumber
\end{align}
We will clarify result-by-result where we need to limit to $h\geq 3$ or $h \geq 4$. Also, fix $z = \sum_{j=1}^{h-1} x_j E^j$ as in Theorem \ref{thm:initialSF-basis}. Note that by \eqref{eqn:blanket-assumptions}, we have $v_p(\mathcal L^{-1}) \geq -1$ so the estimates \eqref{eqn:xj-estimate} in Theorem \ref{thm:initialSF-basis} hold.

\subsection{Preparing for descent}\label{subsec:Rm-basis}
Consider the ring 
$$
R_2 = \{ f = \sum a_i u^i \in F[\![u]\!] \mid i + 2v_p(a_i) \rightarrow \infty \text{ as $i \rightarrow \infty$}\}.
$$
Thus $R_2$ is the $F$-Banach algebra of series converging on $|u| \leq p^{-1/2}$. We equip $R_2$ with the valuation $v_{R_2}(\sum a_i u^i) = \inf_i \{i + 2v_p(a_i)\}$. The canonical map $\mathcal O_F \hookrightarrow R_2$ factors through $S_F$ since $v_{R_2}(E^p/p) = p - 2 > 0$.  Finally, given $v \in \R$, we define additive subgroups $H_v^{\circ} \subseteq H_v \subseteq R_2$ by
\begin{align*}
H_v &= \{ f \in R_2 \mid v_{R_2}(f) \geq v\}; &
H_v^{\circ} &= \{ f \in R_2 \mid v_{R_2}(f) > v\}.
\end{align*}
For any $v$, $H_v$ and $H_v^{\circ}$ are stable under $\varphi$. In fact,  for any $j \geq 0$ we have that
\begin{equation}\label{eqn:stability}
\varphi(H_v \cap u^j R_2) \subseteq H_{v+j(p-1)} \cap u^{pj}R_2,
\end{equation}
and the same for $H^{\circ}_v$ replacing $H_v$. See \cite[Lemma 4.1.1]{BergdallLevin-BLZ}, for instance.

Our first lemma, concerning some entries of the matrix in Theorem \ref{thm:initialSF-basis}, is straightforward so we omit the proof (compare with \cite[Lemma 5.1.1]{BergdallLevin-BLZ}).

\begin{lemma}\label{lemma:lambda-assertion}
Let $\lambda_- = \prod_{n\geq 0} \varphi^{2n+1}(E)/p$ and $\lambda_{++} = \varphi(\lambda_-)$ be as in Theorem \ref{thm:initialSF-basis}. Then,
\begin{enumerate}[label=(\alph*)]
\item $\lambda_- \in 1 + H_{p-2}$ and $\lambda_{++} \in 1 + H_{p^2-2}$;\label{enum:lambda-assertion:1plus}
\item $\lambda_-, \lambda_{++}\in R_2^\times$;\label{enum:lambda-assertion:unit}
\item $v_{R_2}(\lambda_-^{\pm 1}) = 0 = v_{R_2}(\lambda_{++}^{\pm 1})$.\label{enum:lambda-assertion:valuation}
\end{enumerate}
\end{lemma}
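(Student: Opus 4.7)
The plan is to unpack the explicit formulas for $\lambda_-$ and $\lambda_{++}$ and then use that the $v_{R_2}$-valuations of the relevant terms are strictly positive when $p \neq 2$. Since $E = u+p$, we have $\varphi^m(E) = u^{p^m} + p$ for every $m \geq 0$, so $\varphi^m(E)/p = 1 + u^{p^m}/p$. A direct computation with the definition of $v_{R_2}$ gives $v_{R_2}(u^{p^m}/p) = p^m - 2$, which is positive whenever $m \geq 1$ (because $p \geq 3$).

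For part \ref{enum:lambda-assertion:1plus}, I would then observe that each factor $\varphi^{2n+1}(E)/p$ in the product defining $\lambda_-$ lies in $1 + H_{p^{2n+1}-2}$, with $p^{2n+1} - 2 \geq p - 2 > 0$. The product converges in $R_2$ because the valuations of the factors tend to infinity, and expanding out shows
\[
\lambda_- \in 1 + H_{p-2},
\]
since the lowest-order contribution comes from the $n=0$ factor. The claim for $\lambda_{++} = \varphi(\lambda_-)$ follows identically, with the lowest-order factor now being $\varphi^2(E)/p \in 1 + H_{p^2-2}$.

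Part \ref{enum:lambda-assertion:unit} is then immediate from \ref{enum:lambda-assertion:1plus}: writing $\lambda_- = 1 + f$ with $v_{R_2}(f) \geq p-2 > 0$, the geometric series $\sum_{n \geq 0}(-f)^n$ converges in $R_2$ because $v_{R_2}(f^n) \geq n(p-2) \to \infty$. So $\lambda_-^{-1} \in 1 + H_{p-2}$ as well, and the same argument applies to $\lambda_{++}$.

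Finally, part \ref{enum:lambda-assertion:valuation} falls out of \ref{enum:lambda-assertion:1plus} and the proof of \ref{enum:lambda-assertion:unit}: both $\lambda_-^{\pm 1}$ and $\lambda_{++}^{\pm 1}$ belong to $1 + H_{p-2}^{\circ} \subseteq 1 + H_0^{\circ}$, so their constant term is $1$ and they contribute valuation exactly $0$. There is no real obstacle here; the only thing to watch is the hypothesis $p \neq 2$, which is exactly what ensures $p - 2 > 0$ so that the factors are topologically nilpotent in the $v_{R_2}$-topology and the infinite products converge in $R_2$.
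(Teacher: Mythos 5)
Your argument is correct and is precisely the one the authors intend; the paper omits the proof as ``straightforward,'' pointing to \cite[Lemma 5.1.1]{BergdallLevin-BLZ}, and your computation $\varphi^m(E)/p = 1 + u^{p^m}/p$ with $v_{R_2}(u^{p^m}/p) = p^m - 2$, followed by the convergent product and geometric series, is exactly what is needed. One small slip in part (c): since the $n=0$ factor gives $v_{R_2}(\lambda_- - 1) = p-2$ with equality, the inverses lie in $1 + H_{p-2}$ rather than $1 + H_{p-2}^{\circ}$, but this is harmless because $p-2 > 0$ still gives $H_{p-2} \subseteq H_0^{\circ}$, which is all the valuation count requires.
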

We also prepare estimates for $z$. Note that by \eqref{eqn:blanket-assumptions} the estimate \eqref{eqn:xj-estimate} becomes
\begin{equation}\label{eqn:xs-assumption}
v_p(x_j) > v_p((h-1)!) - v_p(j!) - j - 1 \geq - j - 1.
\end{equation}
Recall, we write $a_p = \varpi^{h-1} + \varpi^{h+1}$. Thus, $v_{p}(a_p) = \frac{h-1}{2}$.
\begin{lemma}\label{lemma:z-estimates}
For $z = \sum_{j=1}^{h-1} x_j E^j$ as above, and $\nu = -1 + \varphi(z)(a_p - p^h z)$, we have 
\begin{multicols}{2}
\begin{enumerate}[label=(\alph*)]
\item $p^h z \in H_{h-1}^{\circ}$;\label{enum:z-estimates:p^hz}
\item $\varphi(z) \in H_{-2}^{\circ}$;\label{enum:z-estimates:phiz}
\item $\nu \in -1 + H_{h-3}^{\circ}$;\label{enum:z-estimates:nu}
\item If $h \geq 3$, then $\nu \in R_2^\times$.\label{enum:z-estimates:nu-unit}
\end{enumerate}
\end{multicols}
\noindent Furthermore, if $p =3$ and $h =3$, then $\phz(z) \in H^{\circ}_{-1}$ and $\nu \in -1 + H_{h-2}^{\circ} = -1 + H^{\circ}_{1}$.  
\end{lemma}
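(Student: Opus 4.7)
The plan is to reduce everything to term-by-term $v_{R_2}$-estimates, exploiting that $v_{R_2}$ is a multiplicative valuation (it is the valuation attached to the supremum norm on the closed disk $|u| \leq p^{-1/2}$, so $R_2$ is a complete normed $F$-algebra). The only baseline inputs I need are $v_{R_2}(E) = 1$ and $v_{R_2}(\varphi(E)) = 2$ (the latter using $p \geq 3$), the identity $v_{R_2}(a_p) = 2v_p(a_p) = h-1$, and the estimate $v_p(x_j) > -j-1$ from \eqref{eqn:xs-assumption}. Note that every element of $1 + H_0^\circ$ is automatically a unit in $R_2$ by a geometric series.

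For \ref{enum:z-estimates:p^hz} and \ref{enum:z-estimates:phiz}, I would just compute term by term: for $1 \leq j \leq h-1$, $v_{R_2}(p^h x_j E^j) = 2h + 2v_p(x_j) + j > 2h - j - 2 \geq h-1$, which gives \ref{enum:z-estimates:p^hz}; and $v_{R_2}(x_j \varphi(E)^j) = 2v_p(x_j) + 2j > -2$, which gives \ref{enum:z-estimates:phiz}. Combining these with $v_{R_2}(a_p) = h-1$ shows that both summands of $\varphi(z)(a_p - p^h z)$ have $v_{R_2} > h-3$, proving \ref{enum:z-estimates:nu}. For \ref{enum:z-estimates:nu-unit}, when $h \geq 3$ the element $\nu + 1$ lies in $H_0^\circ$, so $-\nu = 1 - (\nu+1)$ is a unit.

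The hard part is the endpoint case $p = h = 3$. The generic bound in \ref{enum:z-estimates:phiz} only yields $v_{R_2}(\varphi(z)) > -2$, but the refined conclusion demands $> -1$, and my estimate in \ref{enum:z-estimates:nu} then needs to improve in lockstep. The key input will be the explicit formula $z(0) = \tfrac{-1}{2\mathcal L}(\mathcal L^{-1} + 1)$ from Example \ref{exam:rmk-p=3-h=3-L}: under \eqref{eqn:blanket-assumptions} one has $v_p(\mathcal L^{-1}) > 0$, hence $v_p(z(0)) > 0$. Expanding
$$
\varphi(z) \;=\; x_2 u^6 \;+\; (x_1 + 6 x_2) u^3 \;+\; z(0),
$$
the constant term improves from the generic bound $v_{R_2} > -2$ to $v_{R_2} > 0$ by virtue of $z(0)$, the middle term has $v_{R_2} > -1$ by \eqref{eqn:xs-assumption} (using $v_p(x_1) > -2$ and $1 + v_p(x_2) > -2$), and the leading term is visibly in $H_0^\circ$. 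This gives $\varphi(z) \in H^\circ_{-1}$, and rerunning the argument of \ref{enum:z-estimates:nu} with this improved input upgrades $\nu + 1 \in H^\circ_0$ to $\nu + 1 \in H^\circ_{h-2} = H^\circ_1$. The main obstacle, then, is precisely identifying the cancellation in $z(0)$ coming from the filtration computation in Section \ref{subsec:filtration}; without this input the generic bounds on $\varphi(z)$ are sharp and the $(p,h) = (3,3)$ statement would fail.
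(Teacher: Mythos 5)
Your proof is correct and follows essentially the same route as the paper: parts (a)--(d) are handled by the identical term-by-term $v_{R_2}$ estimates, and the $p=h=3$ refinement in both arguments rests on the same key observation, namely that the naive bound on $\varphi(z)$ is lossy precisely at the constant term, whose valuation is improved via the explicit formula $z(0) = -\tfrac{1}{2\mathcal L}(\mathcal L^{-1}+1)$ from Example~\ref{exam:rmk-p=3-h=3-L}. The only cosmetic difference is in how the constant term is isolated: the paper proves the clean uniform estimate $v_{R_2}(\varphi(E)^j - E(0)^j) \geq p + 2j - 2$ and sums over $j$, whereas you expand $\varphi(z) = x_2 u^6 + (x_1 + 6x_2)u^3 + z(0)$ explicitly (which is available since $h=3$) and bound each coefficient directly. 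Both are correct and equivalent in substance for this special case; the paper's inequality would extend more readily to larger $h$ if one ever needed it.
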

\begin{proof}
First, $v_{R_2}(E^j) = j$. By the ultrametric inequality and \eqref{eqn:xs-assumption}, we see
$$
v_{R_2}(z) > \inf\{2(-j-1) + j \mid 1 \leq j \leq h-1\} = -1-h.
$$
Part \ref{enum:z-estimates:p^hz} follows because $v_{R_2}(p^h) = 2h$. For \ref{enum:z-estimates:phiz},  note $v_{R_2}(\varphi(E)^j) = 2j$. Thus, using \eqref{eqn:xs-assumption},
\begin{equation*}
v_{R_2}(\varphi(z)) > \inf \{2(-j-1) + 2j \mid 1 \leq j \leq h-1 \} = -2.
\end{equation*}
Continuing, $\varphi(z)p^h z \in H_{h-3}^{\circ}$ by parts \ref{enum:z-estimates:p^hz} and \ref{enum:z-estimates:phiz} and, since $v_{R_2}(a_p) = h-1$, we have  $\varphi(z)a_p \in H_{h-3}^{\circ}$. This proves \ref{enum:z-estimates:nu}. Finally, part \ref{enum:z-estimates:nu-unit} follows from the geometric series and part \ref{enum:z-estimates:nu}.

Finally, suppose $p = h = 3$. By the argument for (c) above, it suffices to show $\varphi(z) \in H_{-1}^{\circ}$. We note $v_{R_2}(\varphi(E)^j - E(0)^j) \geq p + 2j - 2$ for any $j$. Thus, by \eqref{eqn:xs-assumption}
\begin{equation}\label{eqn:phi-diff}
v_{R_2}(\phz(z) - \varphi(z)(0))>  p + 2j - 2  - 2( j +1) = p - 4 = -1.
\end{equation}
But, by Example \ref{exam:rmk-p=3-h=3-L} we have $\varphi(z)(0) = z(0) = \frac{1}{4\mathcal L}\left(\frac{1}{\mathcal L} + 1\right)$. Since $v_p(\mathcal L^{-1}) > 0$, \eqref{eqn:phi-diff} then implies $v_{R_2}(\varphi(z)) > -1$ as we wanted.
\end{proof}

We now write $\mathcal M_2 = \mathcal M_{h+1,\mathcal L} \otimes_{S_F} R_2 \cong \underline{\mathcal M}_{\mathcal O_F}(D_{h+1,\mathcal L}) \otimes_{\mathcal O_F} R_2$. Thus, $\mathcal M_2 \in \Mod_{R_2}^{\varphi,\leq h}$. We also introduce some notation. Given $A \in \Mat_d(R_2)$ and $C \in \GL_d(R_2)$ we write $
C \ast_\varphi A = C\cdot A \cdot \varphi(C)^{-1}$. Thus, if $(e_1,e_2)$ is a basis of $\mathcal M_2$ and $A$ is the matrix of $\varphi_{\mathcal M_2}$ in that basis, then $C\ast_\varphi A$ is the matrix of $\varphi_{\mathcal M_2}$ in the basis $(e_1',e_2')$ is given by $(e_1',e_2') = (e_1,e_2)C^{-1}$.

\begin{prop}\label{prop:descent-almostready} Assume $h \geq 4$ or $p=h=3$.   Then, there exists a basis of $\mathcal M_2$ in which the matrix of $\varphi_{\mathcal M_2}$ is $\begin{smallpmatrix} G & -1 \\ E^h & 0 \end{smallpmatrix}$, where $
G \in (a_p - p^h z)\left({\lambda_-\over \lambda_{++}}\right)^h + H_h^{\circ}$.
\end{prop}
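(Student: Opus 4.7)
The plan is to construct the desired basis by taking $e_2' = r\cdot e_2$ for a unit $r \in R_2^\times$ close to $1$ and $e_1' = -\varphi(e_2')$. By construction $\varphi(e_2') = -e_1'$, which immediately forces $(1,2) = -1$ and $(2,2) = 0$ in the new matrix. Computing $\varphi(e_1') = -\varphi^2(e_2')$ and simplifying using $A_{11}A_{22} = E^h(\nu+1)$ (equivalent to $\det A = E^h$), one finds
\[
(2,1) = \frac{\varphi^2(r)\,\varphi(\nu)}{r\,\nu}\, E^h, \qquad (1,1) = \frac{\varphi^2(r)}{\varphi(r)}\left(\frac{\varphi(\nu)}{\nu} A_{11} + \varphi(A_{22})\right).
\]
Requiring $(2,1) = E^h$ is equivalent to the functional equation $\varphi^2(r)/r = \nu/\varphi(\nu)$, and once $r$ is chosen, the $(1,1)$ entry $G$ is determined.

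The central task is to solve this functional equation in $R_2^\times$. Set $\delta := \nu/\varphi(\nu) - 1$. Expanding $\nu - \varphi(\nu)$ using $\varphi(a_p) = a_p$ together with Lemma \ref{lemma:z-estimates} shows $\delta \in H_v^\circ$ with $v = h-3$ for $h \geq 4$, and $v = 1$ in the boundary case $p = h = 3$ (invoking the refined bound $\varphi(z) \in H_{-1}^\circ$). Crucially $\delta(0) = 0$ because $\varphi(z)(0) = z(0) = \varphi^2(z)(0)$. Write $r = 1 + \eta$; the equation becomes $\varphi^2(\eta) - \eta = \delta(1 + \eta)$. For any $f \in R_2$ with $f(0) = 0$ the \emph{linear} problem $\varphi^2(\eta) - \eta = f$ has a unique solution $\eta \in R_2$ with $\eta(0) = 0$, via the explicit coefficient recursion $b_{p^{2\ell}m} = -\sum_{i=0}^{\ell} f_{p^{2i}m}$ (for $p^2 \nmid m$); a direct estimate on $v_p(b_k)$ using the definition of $v_{R_2}$ shows this solution operator $T$ sends $H_{v'}^\circ$ into itself for every $v'$. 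Then the iteration $\eta_0 = 0$, $\eta_{n+1} = T(\delta(1+\eta_n))$ yields consecutive differences $\eta_{n+1} - \eta_n = T(\delta(\eta_n - \eta_{n-1})) \in H_{(n+1)v}^\circ$, so the sequence converges in the $v_{R_2}$-complete ring $R_2$ to a unique $\eta \in H_v^\circ$. Since $v \geq 1 > 0$, $r = 1 + \eta \in R_2^\times$.

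To finish, I would verify that $C$ is invertible and that $G - A_{11} \in H_h^\circ$. The change-of-basis matrix
\[
C^{-1} = \begin{pmatrix} -\varphi(r)\nu & 0 \\ -\varphi(r) A_{22} & r \end{pmatrix}
\]
has determinant $-r\varphi(r)\nu$, a unit by Lemma \ref{lemma:z-estimates}(d) and $r,\varphi(r) \in R_2^\times$. For the bound on $G$, combine $v_{R_2}(A_{11}) = h - 1$, $\varphi(A_{22}) \in H_{2h-2}^\circ \subseteq H_h^\circ$, and the memberships $\varphi(\nu)/\nu - 1,\ \varphi^2(r)/\varphi(r) - 1 \in H_v^\circ$. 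Each correction to $A_{11}$ lies in either $H_{(h-1)+v}^\circ$ or $H_{2h-2}^\circ$; in both regimes ($h \geq 4$ with $v = h-3$, or $p = h = 3$ with $v = 1$) one has $(h-1)+v \geq h$, promoted to strict via the $\circ$, so $G - A_{11} \in H_h^\circ$.

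The main obstacle is the construction of $r$: making rigorous the iterative solution of the functional equation inside $R_2$ and verifying that the partial inverse $T$ of $\varphi^2 - \mathrm{id}$ preserves the valuation filtration $H_{v'}^\circ$. The hypothesis $h \geq 4$ or $p = h = 3$ enters precisely to guarantee $v \geq 1$, which is what drives the contraction at each iteration; in the boundary case one must invoke the sharper $H_1^\circ$-bound on $\nu + 1$ from Lemma \ref{lemma:z-estimates}.
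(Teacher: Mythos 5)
Your proposal is correct and is a genuinely different (though ultimately equivalent) presentation of the paper's argument. The paper proceeds in two explicit conjugations: first a unipotent lower-triangular change of basis kills the $(2,2)$-entry and produces the matrix $\begin{smallpmatrix} \mu + \nu\varphi(\eta)/\varphi(\nu) & \nu \\ -E^h/\nu & 0 \end{smallpmatrix}$; then a diagonal conjugation by $\begin{smallpmatrix} -\nu_-/(\nu(0)\nu_+) & 0 \\ 0 & \nu_+/\nu_-\end{smallpmatrix}$, where $\nu_+ = \prod_{n\geq 0}\varphi^{2n}(\nu/\nu(0))$ and $\nu_- = \varphi(\nu_+)$, normalizes $\nu$ to $-1$. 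You instead build the final basis in one shot via $e_2' = re_2$, $e_1' = -\varphi(e_2')$, and reduce to solving $\varphi^2(r)/r = \nu/\varphi(\nu)$; unwinding, your $r$ equals $\nu_-/\nu_+$ (times a constant), and your change-of-basis matrix $C$ agrees with the paper's composite. The real difference is how the normalizing unit is constructed: the paper writes down the closed-form infinite product and cites \cite[Lemma 4.1.1]{BergdallLevin-BLZ} for its convergence, whereas you solve the functional equation by a Banach fixed-point iteration, with the contraction driven by $\delta = \nu/\varphi(\nu) - 1 \in H_v^\circ$ and a clean lemma that the partial inverse $T$ of $\varphi^2 - \mathrm{id}$ on series with vanishing constant term preserves each $H_{v'}^\circ$. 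Your $T$-lemma is correct (the recursion $b_{p^{2\ell}m} = -\sum_{i\le\ell} f_{p^{2i}m}$ gives $v_p(b_{p^{2\ell}m}) > (v'-m)/2$, so $v_{R_2}(b_{p^{2\ell}m}u^{p^{2\ell}m}) > v'$), and the estimate $G - \mu \in H_h^\circ$ follows as you sketch, using $v_{R_2}(\mu) \ge h-1$, $\varphi(\eta) \in H_{2h-2}^\circ$, and the multiplicative $1 + H_v^\circ$-factors with $v\ge 1$. What the paper's route buys is an explicit description of the conjugating matrix, reusable in the descent step; what yours buys is a cleaner abstract mechanism that does not require separating the two normalizations. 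Both hinge on the same numerical input ($v = h-3 \ge 1$ for $h\ge 4$, or $v = 1$ for $p=h=3$ via the refined bound on $\varphi(z)$). As written your proposal is a plan with the remaining verifications correctly identified and straightforward to complete.
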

\begin{proof}
By Theorem \ref{thm:initialSF-basis}, there is a basis $(e_1,e_2)$ of $\mathcal M_2$ such that $\varphi_{\mathcal M_2}(e_1,e_2) = (e_1,e_2)A$, where
\begin{equation*}
A = \begin{pmatrix} (a_p - p^h z)\left({\lambda_-\over \lambda_{++}}\right)^h & -1 + \varphi(z)(a_p - p^h z) \\ E^h & E^h \varphi(z) \left({\lambda_{++}\over \lambda_-}\right)^h \end{pmatrix} = \begin{pmatrix} \mu & \nu \\ E^h & \eta \end{pmatrix},
\end{equation*}
where $\nu$ is as in Lemma \ref{lemma:z-estimates} and $\mu$ and $\eta$ are defined by the equality. Assume for now just that $h \geq 3$. Then, by Lemma \ref{lemma:z-estimates}\ref{enum:z-estimates:nu-unit}, $\nu \in R_2^\times$. Making a change of basis on $\mathcal M_2$, we replace $A$ by (note that $\mu\eta = (1+\nu)E^h$)
\begin{equation*}
A' = \begin{pmatrix} 1 & 0\\ -\eta/\nu & 1 \end{pmatrix} \ast_\varphi A = \begin{pmatrix} \mu + {\nu\varphi(\eta)\over \varphi(\nu)} & \nu \\ -E^h \nu^{-1} & 0 \end{pmatrix}.
\end{equation*}
Since $v_{R_2}(\nu +1) > 0$ by Lemma \ref{lemma:z-estimates}\ref{enum:z-estimates:nu}, we have $\nu(0) \in \Lambda^\times$. Thus $\nu_0 = \nu/\nu(0) \in 1 + (H_{h-3}^{\circ} \cap uR_2)$. By \eqref{eqn:stability}, we have $\varphi^k(\nu_0) \in 1 + H_{h-3+m_k}$ where $m_k \rightarrow \infty$ as $k \rightarrow \infty$. Thus, the infinite product $\nu_+ = \prod_{n \geq 0} \varphi^{2n}(\nu_0)$ converges in $R_2$. Set $\nu_- = \varphi(\nu_+)$, so $\nu_{\pm} \in 1 + H_{h-3}^{\circ} \subseteq R_2^\times$. We now change basis on $\mathcal M_2$ again to get a matrix $A''$ for $\varphi_{\mathcal M_2}$ given by
\begin{equation*}
A'' = \begin{pmatrix}  {-1\over \nu(0)} {\nu_- \over \nu_+} & 0 \\ 0 & {\nu_+ \over \nu_-}\end{pmatrix} \ast_{\varphi} A' = \begin{pmatrix} G & -1 \\ E^h & 0 \end{pmatrix},
\end{equation*}
where 
\begin{equation}\label{eqn:F-defn}
G = \left(\mu + {\nu\varphi(\eta)\over \varphi(\nu)}\right) {\nu_-^2 \over \nu_+ \nu_{++}}
\end{equation}
and $\nu_{++} = \varphi(\nu_-)$.  

To complete the argument, we justify $G \in \mu + H_h^{\circ}$. We already know $\nu_-^2/\nu_+\nu_{++} \in 1 + H_{h-3}^{\circ}$. The same is true for $\nu/\varphi(\nu)$. So,
\begin{equation}\label{eqn:nuphi(eta)/phi(nu)}
v_{R_2}\left({\nu\varphi(\eta)\over \varphi(\nu)}\right) \geq v_{R_2}(\varphi(\eta)) \geq v_{R_2}(\varphi(E)^h\varphi^2(z)),
\end{equation}
where we used Lemma \ref{lemma:lambda-assertion} to remove $\lambda_-$ and $\lambda_{++}$ from the estimate. We note $v_{R_2}(\varphi(E)^h) = 2h$ and $v_{R_2}(\varphi^2(z)) \geq v_{R_2}(\varphi(z)) > -2$, by \eqref{eqn:stability} and Lemma \ref{lemma:z-estimates}\ref{enum:z-estimates:phiz}. Thus from \eqref{eqn:nuphi(eta)/phi(nu)} we deduce that $v_{R_2}(\nu\varphi(\eta)/\varphi(\nu)) > 2h - 2 = 2(h-1)$.  We also note that $a_p-p^hz \in H_{h-1}$. Thus, $\mu \in H_{h-1}$ and so, returning to the definition \eqref{eqn:F-defn} of $\mu$ and $G$, we see
\begin{equation*}
G \in \bigl(\mu + H_{2(h-1)}^{\circ}\bigr)\cdot (1 + H_{h-3}^{\circ}) \subseteq \mu + H_{2h-4}^{\circ}+  H^{\circ}_{2(h-1)} = \mu + H_{2h-4}^{\circ}.
\end{equation*}
Now, if $h \geq 4$, then $2h-4 \geq h$ and so $G \in \mu + H_h^{\circ}$.  This completes the proof except if $p = h = 3$. In that case, Lemma \ref{lemma:z-estimates} shows $\nu \in -1 + H^{\circ}_1$, rather than $-1 + H_0^{\circ}$, from which we deduce
\begin{equation*}
G \in \bigl(\mu + H_{4}^{\circ}\bigr)\cdot (1 + H_{1}^{\circ}) \subseteq \mu + H_3^{\circ} = \mu + H_h^{\circ}
\end{equation*}
anyways. This completes the proof.
\end{proof} 

\subsection{Descent}\label{subsec:descent}
To descend to $\mathfrak S_{\Lambda}$, we use the algorithm from \cite[Section 4]{BergdallLevin-BLZ}. Write $T_{\leq d} : R_2 \rightarrow F[u]$ for the ``truncation'' operation $T_{\leq d}(\sum a_i u^i) = \sum_{i\leq d} a_i u^i$ and $T_{>d}(f) = f - T_{\leq d}(f)$. In the next two proofs, we will use the following principle:\ if $f \in R_2$ and $v_{R_2}(T_{\leq d}(f)) > d$ (for instance, if $v_{R_2}(f) > d$) then $T_{\leq d}(f) \in \mathfrak m_F[u]$.

\begin{prop}\label{prop:descent-prop} 
Suppose that $G \in R_2$ such that 
\begin{enumerate}[label=(\alph*)]
\item $G \in H_{h-1}$;\label{enum:descent-prop:g-ass}
\item $T_{>h}(G) \in H_{h-1}^{\circ}$;\label{enum:descent-prop:error-ass}
\item $T_{\leq h}(G) \in \mathfrak m_F[u]$.\label{enum:descent-prop:integral-ass}
\end{enumerate}
Then, given $A = \begin{smallpmatrix} G & -1 \\ E^h & 0 \end{smallpmatrix}$, there exists $C \in \GL_2(R_2)$  and $P \in \mathfrak m_F[u]$ such that $C \ast_{\varphi} A = \begin{smallpmatrix} P & -1 \\ E^h & 0 \end{smallpmatrix}$.
\end{prop}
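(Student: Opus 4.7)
The plan is to construct $C$ as a limit $C = \lim_n C_n \cdots C_1$ of matrices $C_j \in \GL_2(R_2)$ close to the identity, following the row-reduction scheme of \cite[Section 4]{BergdallLevin-BLZ}. Set $A_0 := A$ and decompose $G = P_0 + \widetilde{G}_0$ with $P_0 := T_{\leq h}(G) \in \mathfrak{m}_F[u]$ and $\widetilde{G}_0 := T_{>h}(G) \in H_{h-1}^{\circ}$, which is licensed by hypotheses (b)--(c). I will maintain inductively that each conjugate $A_n := C_n \ast_{\varphi} A_{n-1}$ retains the shape $\begin{smallpmatrix} P_n + \widetilde{G}_n & -1 \\ E^h & 0 \end{smallpmatrix}$, where $P_n \in \mathfrak{m}_F[u]$ is a polynomial of degree $\leq h$ and $v_{R_2}(\widetilde{G}_n) \to \infty$. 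Taking the limit then conjugates $A$ to $\begin{smallpmatrix} P & -1 \\ E^h & 0 \end{smallpmatrix}$ with $P = \lim_n P_n \in \mathfrak{m}_F[u]$.

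The core inductive step rests on a Weierstrass-style decomposition in $R_2$: since $E = u+p$ has exactly one zero inside $|u| \leq p^{-1/2}$ (as $p \neq 2$), any $f \in R_2$ can be written uniquely as $f = r + E^h q$ with $r \in F[u]_{<h}$ and $q \in R_2$. Applied to $\widetilde{G}_n$, this produces $\widetilde{G}_n = P_n' + E^h q_n$; the hypothesis $\widetilde{G}_n \in H_{h-1}^{\circ}$ forces the coefficients of $P_n'$ in the expansion along $1, E, \ldots, E^{h-1}$ to have strictly positive $p$-adic valuation, hence $P_n' \in \mathfrak{m}_F[u]$. To perform the conjugation, I linearize $C_{n+1} \ast_{\varphi} A_n \equiv A_n + X A_n - A_n \varphi(X) \pmod{X^2}$ with $X = (x_{ij})$. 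Imposing preservation of the off-diagonal entries $-1$ and $E^h$ couples the $x_{ij}$, leaving $x_{12}$ as the effective parameter and determining $x_{11}$ via a fixpoint equation $x_{11} - \varphi^2(x_{11}) = \varphi^2(x_{12})\varphi(G) - G\varphi(x_{12})$, solvable by the $\varphi$-contraction bound of \cite[Lemma 4.1.1]{BergdallLevin-BLZ}. Choosing $x_{12} = -q_n$ exactly absorbs the $E^h q_n$ contribution and the residual tail $\widetilde{G}_{n+1}$ is controlled by $\varphi(E)^h \varphi^2(q_n)$ plus quadratic corrections, which gains roughly $h$ in $R_2$-valuation over $\widetilde{G}_n$.

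Combining, $P_{n+1} := P_n + P_n' \in \mathfrak{m}_F[u]$ still has degree $\leq h$, and $v_{R_2}(\widetilde{G}_{n+1}) \geq v_{R_2}(\widetilde{G}_n) + h - O(1)$; since $v_{R_2}(X_{n+1}) \to \infty$ correspondingly, the partial products converge in $\GL_2(R_2)$ and the limit matrix has the required shape. The main obstacle will be the careful bookkeeping: ensuring the off-diagonal entries are preserved \emph{exactly} rather than merely to first order (which may require a second-order correction to $C_{n+1}$ beyond the naive $I + X$, or a sub-iteration at each step), and obtaining a uniform lower bound on the valuation gain so that the infinite product truly converges. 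Both of these are precisely what the BLZ Section 4 machinery is designed to handle, with the Weierstrass decomposition above as the key input that ensures integrality of the low-degree corrections is preserved across iterations.
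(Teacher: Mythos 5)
Your proposal and the paper's proof run on the same engine, the row-reduction machinery of \cite[Section 4]{BergdallLevin-BLZ}, but at different levels of abstraction. You attempt to re-derive the iterative scheme from scratch---Weierstrass-dividing the tail $\widetilde G_n$ by $E^h$, linearizing the $\varphi$-conjugation, solving a fixpoint equation for $x_{11}$, and tracking valuation gains across the infinite product---whereas the paper simply checks the hypotheses of \cite[Theorem 4.3.7]{BergdallLevin-BLZ} and applies it as a black box. Concretely, the paper uses hypothesis (a) together with $E^h\in u^h+H_{h+1}$ to place $A$ in $\begin{smallpmatrix}0&-1\\u^h&0\end{smallpmatrix}+\begin{smallpmatrix}H_{h-1}&0\\H_{h+1}&0\end{smallpmatrix}$, verifies that $A$ is $\gamma$-allowable with $\gamma=1$ (the parameters there give $h-1-a'=\tfrac h2-1+\tfrac{p-1}2\ge 1$), records the error $\varepsilon=v_{R_2}(T_{>h}(G))-a'$, and reads off from the cited theorem a $C\in\mathrm{GL}_2(R_2)$ with $C|_{u=0}=I$ such that $C\ast_\varphi A=\begin{smallpmatrix}P&-1\\f&0\end{smallpmatrix}$ with $P,f$ polynomials of degree $\le h$ and $v_{R_2}(P-T_{\le h}(G))\ge\varepsilon+a'+1>h$, whence $P\in\mathfrak m_F[u]$ by hypothesis (c). The paper also sidesteps the bookkeeping you flag as the ``main obstacle'': rather than arranging each iterate to preserve the off-diagonal entries literally (which, as you say, needs second-order corrections), it lets the $(2,1)$-entry drift to some polynomial $f$ of degree $\le h$ and pins it down afterwards by a determinant argument---$f=\det(C\ast_\varphi A)=rE^h$ for a unit $r\in R_2^\times$, and since $f$ and $E^h$ are both polynomials of degree $\le h$ with the same constant term (because $C|_{u=0}=I$), in fact $f=E^h$. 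Your sketch is morally sound (the fixpoint equation for $x_{11}$ is right, and the Weierstrass remainder being in $\mathfrak m_F[u]$ because $\widetilde G_n\in H_{h-1}^\circ$ is exactly the right observation), but the two points you defer---a uniform valuation gain ensuring convergence of the infinite product, and exact preservation of the off-diagonals---are precisely where the argument can break for small $h$, and they are what Theorem 4.3.7 already packages up. Cite it rather than reprove it, and use the determinant trick to normalize the lower-left entry.
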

\begin{proof}
Since $E^h \in u^h + H_{h+1}$, the assumption \ref{enum:descent-prop:g-ass} implies that
\begin{equation*}
A \in \begin{pmatrix} 0 & -1 \\ u^h & 0 \end{pmatrix} + \begin{pmatrix} H_{h-1} & 0 \\ H_{h + 1} & 0 \end{pmatrix}.
\end{equation*}
In the notation of \cite[Section 4.3]{BergdallLevin-BLZ}, set $a = 0$, $b = h$, $a' = {h \over 2} - {p-1 \over 2}$ and $b' = {h \over 2} + {p-1 \over 2}$, and $(c_0,c_h) = (-1,1)$. Since $h-1-a' = {h\over 2} - 1 + {p-1\over 2} \geq 1$, we see $A$ is $\gamma$-allowable with $\gamma = 1$ in the sense of \cite[Definition 4.3.1]{BergdallLevin-BLZ}. The error of $A$, in the same definition, is $\varepsilon = v_{R_2}(T_{>h}(G)) - a'$. By \cite[Theorem 4.3.7]{BergdallLevin-BLZ}, with $R=R_2$ in {\em loc.\ cit.}, there exists $C \in \GL_2(R_2)$ such that $A' = C\ast_{\varphi} A$ satisfies:
\begin{enumerate}[label=(\roman*)]
\item Evaluating at $u = 0$, we have $A'|_{u=0} = A|_{u=0}$.\label{enum:alg-app-u=0}
\item The matrix $A'$ is of the form $A' = \begin{smallpmatrix} P & -1 \\ f & 0 \end{smallpmatrix}$ with $P$ and $f$ polynomials of degree at most $h$.\label{enum:alg-app-truncation}
\item We have an estimate $v_{R_2}(P - T_{\leq h}(G)) \geq \varepsilon + a' + 1$.\label{enum:alg-app-estimate}
\end{enumerate}
(For the reader checking references, note that the role of $A$ versus $C$ is reversed in \cite{BergdallLevin-BLZ}.)

We claim $P \in \mathfrak m_F[u]$ and $f = E^h$, which would finish the proof of the proposition. To see $P \in \mathfrak m_F[u]$, we start by combining the estimate \ref{enum:alg-app-estimate} and the assumption \ref{enum:descent-prop:error-ass} in order to see that 
\begin{equation*}
v_{R_2}(P - T_{\leq h}(G)) \geq \varepsilon + a' + 1 =  v_{R_2}(T_{>h}(G)) +1 > h.
\end{equation*}
On the other hand, $P-T_{\leq h}(G)$ has degree at most $h$ by \ref{enum:alg-app-truncation} and so $P-T_{\leq h}(G) \in \mathfrak m_F[u]$, which implies $P \in \mathfrak m_F[u]$ by assumption \ref{enum:descent-prop:integral-ass}. 

To see $f = E^h$, we evidently have $f = \det(A') = rE^h$ for some $r \in R_2^\times$. In particular, $f$ has a root of multiplicity $h$ at $u = -p$. But, $f$ is a polynomial of degree at most $h$ by point \ref{enum:alg-app-truncation}, and by point \ref{enum:alg-app-u=0} we have $f(0) = E(0)^h$. It now follows quickly that $f = E^h$, since $F[\![u]\!]$ is a unique factorization domain.
\end{proof}
We now verify the $G$ from Proposition \ref{prop:descent-almostready} satisfies the hypothesis of Proposition \ref{prop:descent-prop}.
\begin{lemma}\label{lemma:Gintegral-properties}
Let $G \in (a_p - p^hz)\left({\lambda_-\over \lambda_{++}}\right)^h + H_{h}^{\circ}$. Then, 
\begin{enumerate}[label=(\alph*)]
\item $G \in H_{h-1}$,\label{enum:Gintegral-prop:G}
\item $T_{>h}(G) \in H_{h-1}^{\circ}$, and\label{enum:Gintegral-prop:>hG} 
\item $T_{\leq h}\left(G \right) \in \mathfrak m_F[u]$.\label{enum:Gintegral-prop:<hG}
\end{enumerate}
\end{lemma}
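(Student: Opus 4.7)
The plan is to decompose $G$ and verify (a), (b), (c) piece-by-piece. Writing $\lambda = (\lambda_-/\lambda_{++})^h$, Lemma \ref{lemma:lambda-assertion} gives $\lambda \in 1 + H_{p-2}$ and $\lambda \in R_2^\times$ with $v_{R_2}(\lambda) = 0$. Expanding, $G \in (a_p - p^hz) + (a_p - p^hz)(\lambda - 1) + H_h^\circ$. Part (a) is immediate: $v_p(a_p) = (h-1)/2$ gives $v_{R_2}(a_p) = h-1$, and $p^hz \in H_{h-1}^\circ$ by Lemma \ref{lemma:z-estimates}\ref{enum:z-estimates:p^hz}, so $a_p - p^hz \in H_{h-1}$; multiplication by the unit $\lambda$ preserves $H_{h-1}$, and $H_h^\circ \subseteq H_{h-1}$. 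For (b), $T_{>h}(a_p - p^hz) = 0$ since $a_p - p^hz$ has degree at most $h-1$, $(a_p - p^hz)(\lambda - 1) \in H_{h-1} \cdot H_{p-2} = H_{h+p-3} \subseteq H_{h-1}^\circ$ (using $p \geq 3$), and $T_{>h}(H_h^\circ) \subseteq H_{h-1}^\circ$.

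For (c), first note $T_{\leq h}(f) \in \mathfrak{m}_F[u]$ whenever $f \in H_h^\circ$, because the $u^i$-coefficient satisfies $v_p > (h-i)/2 \geq 0$. Next, $a_p - p^hz \in \mathfrak{m}_F[u]$:\ $v_p(a_p) = (h-1)/2 > 0$, and expanding $E^j = (u+p)^j$, the estimate \eqref{eqn:xs-assumption} together with $v_p((h-1)!/j!) \geq 0$ yields positive $v_p$ for every coefficient of $p^hz$. The remaining contribution is $T_{\leq h}((a_p - p^hz)(\lambda - 1))$; for $p \geq 5$ the product lies in $H_{h+p-3} \subseteq H_h^\circ$, so its truncation is automatically in $\mathfrak{m}_F[u]$.

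The main obstacle is the $p = 3$ case of (c), where $H_{h+p-3} = H_h$ is not strict, leaving the $u^h$-coefficient only of non-negative $v_p$. To resolve this I plan to sharpen the expansion of $\lambda$: multiplying out the infinite products defining $\lambda_-$ and $\lambda_{++}^{-1}$, and noting that every contribution beyond the leading $u^p/p$ has $v_{R_2} \geq 2(p-2)$, gives $\lambda_-/\lambda_{++} \in 1 + u^p/p + H_{2(p-2)}$, whence $\lambda - 1 \in h u^p/p + H_{2(p-2)}$. The correction $(a_p - p^hz) \cdot H_{2(p-2)} \subseteq H_{h+2p-5} \subseteq H_h^\circ$ is handled as before. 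The remaining polynomial $(h/p) u^p (a_p - p^hz)$ has $u^i$-coefficient $(h/p) b_{i-p}$ for $p \leq i \leq h$, where $b_k$ denotes the $u^k$-coefficient of $a_p - p^hz$. For $i = p$, $v_p((h/p) b_0) = v_p(h) + (h-3)/2$, which is $> 0$ for every $h \geq 3$ in question (noting $v_p(h) = 1$ saves the boundary $h = p = 3$). For $p < i \leq h$, the bound $v_p(b_{i-p}) > h - i + p - 1$ gives $v_p((h/p) b_{i-p}) > v_p(h) + (h-i) + (p-2) > 0$. Combining these establishes (c).
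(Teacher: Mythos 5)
Your proof of parts (a) and (b) follows the paper's essentially verbatim. Part (c) takes a genuinely different route, and it is correct. The paper works inside $S_\Lambda$: writing $f = \lambda_-/\lambda_{++} \in S_\Lambda = \{\sum \alpha_n E^n/p^{\lfloor n/p\rfloor}\}$, it bounds the Taylor coefficients of $a_p f^h$ and $p^h z f^h$ by the observation that the $u^i$-coefficient of an element of $S_\Lambda$ has $v_p \geq -\lfloor i/p\rfloor$, which pairs against $v_p(a_p) = (h-1)/2 > \lfloor h/p\rfloor$; the critical boundary case $p = h = 3$ is then handled by an ad hoc claim about the shape of $f^p$ in $\Lambda + pS_\Lambda$. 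You instead stay entirely inside the $R_2$-filtration: you first note $a_p - p^h z$ itself lies in $\mathfrak m_F[u]$ by a direct Taylor-coefficient estimate from \eqref{eqn:xs-assumption}, and then control $(a_p - p^hz)(\lambda - 1)$ via $H_{h+p-3}$ when $p \ge 5$ and via a one-term-sharper expansion $\lambda - 1 \in h u^p/p + H_{2(p-2)}$ when $p = 3$, with the crucial observation that $v_p(h) \geq 1$ rescues exactly the boundary case $h = p = 3$. Your approach buys uniformity of method (no change of viewpoint for $p = 3$) and transparency in how the $p \mid h$ divisibility enters; the paper's approach is more structural (it leverages the $\Lambda$-algebra structure of $S_\Lambda$ directly, which generalizes more readily to higher rank). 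One small clarification worth recording in your write-up: your claim $v_p(b_0) = v_p(a_p) = (h-1)/2$ exactly uses that the constant term of $p^h z$ has $v_p > h - 1 > (h-1)/2$ for $h \geq 2$, which is implicit but should be flagged since you need equality, not merely an inequality, to evaluate $v_p((h/p)b_0)$ precisely.
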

\begin{proof}
First, the conclusions depend only on $G \bmod H_h^{\circ}$, so we suppose $G = (a_p - p^h z)\left({\lambda_-\over \lambda_{++}}\right)^h$. Part \ref{enum:Gintegral-prop:G} follows from Lemmas \ref{lemma:lambda-assertion} and \ref{lemma:z-estimates}. For part \ref{enum:Gintegral-prop:>hG}, we first have, by Lemma \ref{lemma:lambda-assertion}\ref{enum:lambda-assertion:1plus}, that $a_p\left({\lambda_-\over \lambda_{++}}\right)^h \in a_p + a_p H_{p-2}$. So, $T_{>0}\big(a_p\left({\lambda_-\over \lambda_{++}}\right)^h\big) \in H_{h+p-3} \subseteq H_h$. On the other hand, by Lemma \ref{lemma:z-estimates}\ref{enum:z-estimates:p^hz} we have $p^hz \in H_{h-1}^{\circ}$. Thus we've shown in fact $T_{>0}(G) \in H_{h-1}^{\circ}$.

%

Finally, we consider part \ref{enum:Gintegral-prop:<hG}. Since $E = u+p$, any $f \in S_\Lambda$ can be written $f= \sum\limits_{n = 0}^\infty \alpha_n \frac{u ^ n}{p^{\lfloor \frac n p  \rfloor}}$ with $\alpha_n \in \Lambda$. Let $f = {\lambda_-\over \lambda_{++}} \in S_\Lambda$, in particular. Since $v_p (a_p) = \frac{h -1}{2} > \lfloor \frac h  p  \rfloor$ unless $ p = h =3$ (or $p=2$, which we have excluded in \eqref{eqn:blanket-assumptions}), we see immediately that $T_{\leq h} (a_p f^h) \in \fm_F[u]$ except when $h = p = 3$. When $h = p$, however, 
\begin{equation*}
T_{\leq p}(f^p) = T_{\leq p}\left(\left(\sum_{n=0}^{p-1} \alpha_n u^n + \alpha_p \frac{u^p}{p}\right)^p\right) \in p\cdot \alpha_0^{p-1}\alpha_p \frac{u^p}{p} + \Lambda[u] \subseteq \Lambda[u].
\end{equation*}
Since $v_p(a_p) > 0$, we see $T_{\leq h}(a_pf^h) \in \mathfrak m_F[u]$ in every case.

By the prior paragraph, to show (c) it remains to show that $T_{\leq h} (p ^h z f^h ) \in \fm_F[u]$ as well. By definition, we can write $f^h = \sum\limits_{i = 0}^\infty \beta_i\frac { E^i}{p ^{\lfloor \frac i p\rfloor}}$ with $\beta _i \in \Lambda$ and recall $z= \sum_{j=1}^{h-1} x_jE^j$. Thus,
\begin{equation}\label{eqn:p^hzf^h}
p^h z f^h = \sum_{n=1}^\infty \left(\sum_{i+j=n} p^h x_j \beta_i p^{-\lfloor \frac i p\rfloor}\right) E^n.
\end{equation}
Using the binomial expansion of $E^n = (u+p)^n$ we see that the $u^m$-term of \eqref{eqn:p^hzf^h} is exactly equal to
$$
\sum_{n=m}^\infty \left(\sum_{i+j=n} p^h x_j \beta_i p^{-\lfloor \frac i p\rfloor}\right){n \choose m}p^{n-m}.
$$
We must show this has positive $p$-adic valuation for $m \leq h$. Since $\beta_i \in \Lambda$ and binomial coefficients are integers, it is enough to show that for all $m \leq h$ and $j < h$, if $n \geq m,j$ then
\begin{equation}\label{eqn:xj-inequality}
v_p(x_j) + h + n-m-{\lfloor \frac{n -j}{p}\rfloor} > 0.
\end{equation}
By \eqref{eqn:xs-assumption} we have $v_p(x_j) > -j-1$ and so
\begin{equation}\label{eqn:how-many-inequalities}
v_p(x_j) + h+n-m-{\lfloor \frac{n -j}{p}\rfloor} > h - m - 1+ n - j - {\lfloor \frac{n -j}{p}\rfloor}.
\end{equation}
But, the right-hand side of \eqref{eqn:how-many-inequalities} is non-negative. Indeed, when $h > m$, this is clear because $n\geq j$. When $h = m$, on the other hand, we have $n \geq m = h > j$. So, the right-hand side of \eqref{eqn:how-many-inequalities} in that case has the form $x - \lfloor x/p\rfloor -1$ with $x \geq 1$, which is also non-negative.
\end{proof}

\subsection{Proof of Theorem \ref{thm:main-thm-in-text}}\label{subsec:theorem-proof}
Finally, we give the proof of the main theorem:
\begin{quote}
{\it 
Assume that $h \geq 3$ and $p \neq 2$. Then, if $\mathcal L$ satisfies
$$
v_p(\mathcal L^{-1}) > {h-1\over 2} - 1 + v_p((h-1)!),
$$
then $\overline V_{h+1,\mathcal L} \cong \operatorname{Ind}_{G_{\mathbb Q_p^2}}^{G_{\mathbb Q_p}}(\omega_2^{h}\chi)$.
}
\end{quote}
\begin{proof}[Proof of Theorem \ref{thm:main-thm-in-text}]

First, if $h = 3$ and $p \geq 5$, then the assumption is that $v_p(\mathcal L) < 0$. The verification that $\overline V_{4,\mathcal L} \cong \operatorname{Ind}_{G_{\mathbb Q_p^2}}^{G_{\mathbb Q_p}}(\omega_2^{3}\chi)$ is the first bullet point of \cite[Theorem 4.2.4.7(iii)]{BreuilMezard-Multiplicities}, where the reader should take $k = 4 < p$ and $\ell = v_p(\mathcal L) < 0$.

Now we assume that either $h \geq 4$ or $p = h = 3$. Then, applying Proposition \ref{prop:descent-almostready}, Lemma \ref{lemma:Gintegral-properties}, and Proposition \ref{prop:descent-prop}, we deduce that there exists a basis of $\mathcal M_2$ in which the matrix of $\varphi_{\mathcal M_2}$ is given by $A = \begin{smallpmatrix} P & - 1 \\ E^h & 0 \end{smallpmatrix}$ and $P \in \mathfrak m_F[u]$. Define $\mathfrak M = \mathfrak S_{\Lambda}^{\oplus 2}$ with the matrix of $\varphi$ being given by $A$. Clearly $\mathfrak M$ is a Kisin module over $\mathfrak S_{\Lambda}$ of height $\leq h$, and
$$
\mathfrak M \otimes_{\mathfrak S_{\Lambda}} R_2 \cong \mathcal M_2 = \underline{\mathcal M}_{\mathcal O_F}(D_{h+1,\mathcal L})\otimes_{\mathcal O_F} R_2
$$
as $\varphi$-modules over $R_2$. Thus, by Proposition \ref{prop:reductions-first} we deduce $\mathfrak M = \mathfrak M(T)$ for some lattice $T \subseteq V_{h+1,\mathcal L}$. Furthermore, $\mathfrak M \otimes_{\mathfrak S_{\Lambda}} \mathbb F[u^{-1}]$ is a $\varphi$-module over $\mathbb F(\!(u)\!)$ with Frobenius given by $\begin{smallpmatrix} 0 & - 1 \\ u^h & 0 \end{smallpmatrix}$. This shows, in particular, that $\overline V_{h+1,\mathcal L}$ is the same for any $\mathcal L$ satisfying \eqref{eqn:blanket-assumptions} (see \cite[Corollary 2.3.2]{BergdallLevin-BLZ}).

Let $V_{h+1,\infty}$ be as in the introduction. By \cite[Corollary 5.2.2]{BergdallLevin-BLZ}, for  $V_{h+1,\infty}$ there exists a Kisin module $\M'$  such that  $M':= \mathfrak M'  \otimes_{\mathfrak S_{\Lambda}} \mathbb F[u^{-1}]$ has Frobenius also given by $\begin{smallpmatrix} 0 & - 1 \\ u^h & 0 \end{smallpmatrix}$ and $M'$ determines $\overline V_{h+1,\infty} \cong \operatorname{Ind}_{G_{\mathbb Q_p^2}}^{G_{\mathbb Q_p}}(\omega_2^{h}\chi)$. Therefore, $\overline V_{h+1,\mathcal L} \cong \overline V_{h+1,\infty} \cong \operatorname{Ind}_{G_{\mathbb Q_p^2}}^{G_{\mathbb Q_p}}(\omega_2^{h}\chi)$. 
\end{proof}

\begin{rmk}\label{remark:final-remark}
We return to Remark \ref{rmk:small-p}. Suppose we replace \eqref{eqn:blanket-assumptions} with
\begin{equation}\label{eqn:p3-assumptions}
v_p(\mathcal L^{-1}) > {h-1\over 2} + v_p((h-1)!).
\end{equation}
This has the impact of scaling $z$ by a $p$-adic unit multiple of $p$, thus increasing $v_{R_2}(z)$ by $2$ throughout our estimates in Section \ref{subsec:Rm-basis}. The reader may check that Proposition \ref{prop:descent-almostready} holds with these new estimates, and so the proof goes through for all $h \geq 2$ and $p \geq 3$ under the assumption \eqref{eqn:p3-assumptions}. Of course, this bound is not the sharpest possible when $h=2$ or $h=3$. For instance, we've already noted that for $h=3$ and $p \geq 5$, Breuil and M\'ezard confirmed Theorem \ref{thm:main-thm-in-text} with the stronger bound \eqref{eqn:blanket-assumptions}.

The situation is more complicated when $h = 2$. In that case, for $p \geq 5$, Guerberoff and Park showed that $\overline V_{3,\mathcal L} \cong \operatorname{Ind}_{G_{\mathbb Q_p^2}}^{G_{\mathbb Q_p}}(\omega_2^{2}\chi)$  exactly on $v_p(\mathcal L - 1) < \frac{1}{2}$ (see \cite[Theorem 5.0.5]{GuerberoffPark-Semistable}). Thus, the bound $v_p(\mathcal L) < \frac{1}{2}$ from Theorem \ref{thm:main-thm-in-text} produces too large a region of $\mathcal L$-invariants, whereas \eqref{eqn:p3-assumptions} produces a region too small. For the interested reader, Guerberoff and Park also determined, for any $\mathcal L$, the restriction of $\overline V_{3,\mathcal L}$ to the inertia subgroup. The restriction to inertia was recently removed by Chitrao, Ghate, and Yasuda using a completely different method. See \cite[Theorem 1.3]{ChitraoGhateYasuda-Semistable}. Thus we have a complete picture of $\overline V_{3,\mathcal L}$. It would be amusing to understand if that picture can be recovered from the method here.
\end{rmk}

\bibliography{semistable_bibliography}
\bibliographystyle{abbrv}

\end{document}